\documentclass[12pt]{article}
\usepackage{a4,amsmath,amssymb, amsthm, latexsym, color, graphicx}
\usepackage{tikz}
\usetikzlibrary{arrows.meta}
\usetikzlibrary{decorations.markings}
\usetikzlibrary{calc}
\usepackage{diagbox}

\newtheorem{theorem}{Theorem}[section]
\newtheorem{proposition}[theorem]{Proposition}
\newtheorem{lemma}[theorem]{Lemma}
\newtheorem{corollary}[theorem]{Corollary}

\theoremstyle{definition}
\newtheorem{example}[theorem]{Example}
\newtheorem{definition}[theorem]{Definition}

\title{New results on non-disjoint and classical strong external difference families}
\author{Sophie Huczynska\thanks{Email: sh70@st-andrews.ac.uk (Sophie Huczynska)}  \,and Sophie Hume\thanks{Email: 5sophie5555@gmail.com (Sophie Hume)}}
\date{School of Mathematics and Statistics, University of St Andrews,\\
St Andrews, Fife, KY16 9SS,UK}
\begin{document}
\maketitle

\begin{abstract}
Classical strong external difference families (SEDFs) are much-studied combinatorial structures motivated by information security applications; it is conjectured that only one classical abelian SEDF exists with more than two sets.  Recently, non-disjoint SEDFs were introduced; it was shown that families of these exist with arbitrarily many sets.  We present constructions for both classical and non-disjoint SEDFs, which encompass all known non-cyclotomic examples for either type (plus many new examples) using a sequence-based framework.  Moreover, we introduce a range of new external difference structures (allowing set-sizes to vary, and sets to be replaced by multisets) in both the classical and non-disjoint case, and show how these may be applied to various communications applications. 
\end{abstract}

\begin{section}{Introduction}
\let\thefootnote\relax\footnotetext{Keywords: strong external difference family, non-disjoint strong external difference family, binary sequences, optical orthogonal codes, AMD codes}
Strong external difference families (SEDFs) are combinatorial structures consisting of disjoint families of sets in a group $G$ satisfying a strong uniformity condition on their external differences. 
These were introduced by Paterson and Stinson in \cite{PatSti}, motivated by an information security application to AMD codes and secret-sharing schemes.  In \cite{PatSti}, various related combinatorial objects were introduced, including generalised strong external difference families (GSEDFs) in which the set-sizes are allowed to vary.  These combinatorial objects have been well-studied, with SEDFs gaining particular notoriety since only one example with more than two sets is currently known (comprising $11$ sets in a group of size $243$), despite extensive investigation \cite{BaoJiWeiZha,HucJefNep, JedLi, PatSti, WenYanFuFen}.   SEDFs appear to be very restricted in other respects too, with no infinite families known for any fixed $\lambda$ greater than $1$, and very few constructions known in general.  Numerous non-existence results have been proved for them \cite{JedLi, LeuLiPra, LeuPra, MarSti}, and it is conjectured that no further abelian examples with more than 2 sets exist \cite{LeuLiPra}.  We shall refer to these structures (whose sets are disjoint) as the ``classical" versions.

Recently in \cite{HucNg}, non-disjoint SEDFs were introduced; these are defined analogously to classical SEDFs except that $0$ is treated as a difference just like any other group element, and hence the sets are non-disjoint.  Unlike the situation for classical SEDFs, it is shown in \cite{HucNg} that constructions exist with any number of sets, and that 2-set constructions exist with any value of $\lambda$.  

All non-disjoint SEDF constructions so far, satisfy the property that the external differences between any pair of sets satisfy the uniformity condition; we call these non-disjoint PSEDFs.  (All known classical SEDFs have an analogous structure, with the exception of the single example with $11$ sets).  Non-disjoint PSEDFs arise naturally in communications applications because in $\mathbb{Z}_v$ they correspond precisely to optical orthogonal codes (OOCs) in which the lowest-possible cross-correlation value $\lambda_c$ is obtained \cite{HucNg}.  Pairwise cross-correlation is also of interest in other communications settings, e.g. pairwise shift invariant protocol sequences \cite{ZhaShuWon}.

In this paper, we further investigate non-disjoint SEDFs and classical SEDFs, and provide a unified sequence-based framework for both.  We present a construction which encompasses all known non-disjoint SEDFs as special cases (and provides many new ones), and another construction which encompasses all known classical SEDF results not obtainable from cyclotomy.  The latter is useful for strong circular AMD codes and strong circular EDFs (SCEDFs), recently introduced in \cite{VeiSti} to construct non-malleable threshold schemes.  While the connection between sequences and difference structures is known, its constructive use in describing and building external difference families is novel.

We also extend the definition of non-disjoint PSEDFs and non-disjoint SEDFs in two directions.  Firstly, we allow distinct set-sizes, to obtain versions which we call ``generalised".  This is a precise analogue of the process by which a classical SEDF is extended to a classical GSEDF in \cite{PatSti}.  Again, in the non-disjoint setting we can obtain a wider range of parameters than is currently known for classical GSEDFs \cite{LuNiuCao, WenYanFuFen}.  Secondly, we allow the component sets to be replaced by multisets.  This may be viewed as an external analogue of the ``strong difference families" introduced in \cite{Bur}; the multiset versions also provide a useful technical tool in establishing non-multiset results.  

We demonstrate ways in which new non-disjoint SEDFs and GPSEDFs can be made from old, obtaining the first examples in non-cyclic abelian groups.  We also show that non-disjoint SEDFs and GSEDFs exist in non-abelian groups; there is currently just one known infinite non-abelian family of classical SEDFs (\cite{HucJefNep}). 

While non-disjoint PSEDFs in $\mathbb{Z}_v$ yield constant-weight OOCs, non-disjoint GPSEDFs yield variable-weight OOCs (VW-OOCs), first introduced in \cite{Yan}.  Our constructions (and adaptations of these) yield infinite families of variable-weight OOCs for a wide range of parameters; a lack of systematic constructions for VW-OOCs is noted in \cite{ChuYan}. Non-disjoint GPSEDFs in $\mathbb{Z}_v$ also provide examples of pairwise shift-invariant protocol sequences \cite{ZhaShuWon}.  

For the aid of the reader throughout this paper, we end the introductory section with Figure \ref{fig:SEDFdiagram}, a diagram indicating how the structures in this paper are related to each other.  If structure $A$ is higher in the diagram than structure $B$ and there is a line between them, then any example of structure $A$ is an example of structure $B$.  
\vspace{2mm}

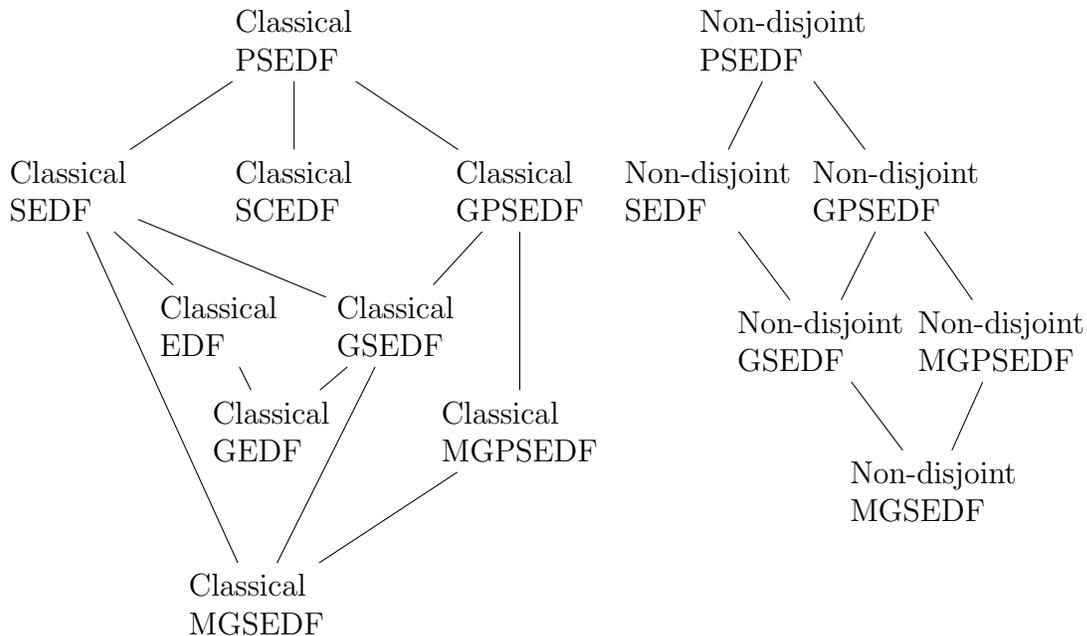
\begin{figure}\label{fig:SEDFdiagram}
\caption{Diagrams of the classical and non-disjoint external difference structures}
\begin{tikzpicture}
  \node (max) at (-0.5,4)  [align=left]{Classical \\ PSEDF};
  \node (a) at (-3.5,2) [align=left]{Classical \\ SEDF};
  \node (b) at (-0.5,2) [align=left]{Classical \\ SCEDF};
  \node (c) at (2.5,2) [align=left]{Classical \\ GPSEDF};
  \node (e) at (0.85,0.2) [align=left]{Classical \\ GSEDF};
  \node (f) at (2.5,-1.2)  [align=left]{Classical \\ MGPSEDF};
 \node (g) at (-1.5,0.2) [align=left]{Classical \\ EDF};
 \node (h) at (-0.8,-1.2) [align=left]{Classical \\ GEDF};
  \node (min) at (-1,-3.5)  [align=left]{Classical \\ MGSEDF};
  \draw (min) -- (a) -- (max) -- (b) 
  (min) -- (f) -- (c) -- (max)
  (h) -- (g) -- (a)
  (min) -- (e)
 (e) -- (h);
\draw[preaction={draw=white, -,line width=6pt}] (a) -- (e) -- (c);
 \node (max1) at (6,4)  [align=left]{Non-disjoint \\ PSEDF};
  \node (i) at (5,2) [align=left]{Non-disjoint \\ SEDF};
  \node (j) at (7.5,2) [align=left]{Non-disjoint \\GPSEDF};
  \node (k) at (6.5,0) [align=left]{Non-disjoint\\GSEDF};
  \node (l) at (8.9,0)  [align=left]{Non-disjoint \\ MGPSEDF};
  \node (min1) at (8,-2)  [align=left]{Non-disjoint \\ MGSEDF};
  \draw (min1) -- (k) -- (i) -- (max1) 
 (min1) -- (l) -- (j) -- (max1)
 (k) -- (j);
\end{tikzpicture}
\end{figure}

\end{section}

\begin{section}{Background}
\subsection{Definitions and context}
For subsets $A, B$ of a group $G$ (written additively), we define the multiset
 $\Delta(A,B)=\{a-b:a\in A, b \in B\}$.  In multiplicative notation we can write this as $\Delta(A,B) = \{ab^{-1}: a \in A, b \in B\}$. (In general we will use additive notation except in situations where multiplicative notation is more natural.) For $\lambda \in \mathbb{N}$ and $S \subseteq G$, we will write $\lambda S$ to denote the multiset comprising $\lambda$ copies of $S$ (except where this might cause confusion, e.g. in Section \ref{section:groups}, where different phrasing is used).

The concept of classical external difference family was first defined in \cite{Oga}, motivated by an application to AMD codes:

\begin{definition}\label{def:EDF}
Let $G$ be a group of order $v$ and let $m>1$.  A family of disjoint $k$-sets $\{A_1, \ldots, A_m\}$ in $G$ is a classical $(v,m,k,\lambda)$-EDF if the multiset equation
$ \bigcup_{i \neq j} \Delta(A_i,A_j)= \lambda(G \setminus \{0\})$ holds.
\end{definition}

The strong version was first defined in \cite{PatSti}, corresponding to a stronger security model. 
\begin{definition}\label{def:SEDF}
Let $G$ be a group of order $v$ and let $m>1$.  A family of disjoint $k$-sets $\{A_1, \ldots, A_m\}$ in $G$ is a classical $(v,m,k,\lambda)$-SEDF if, for each $1 \leq i \leq m$, the multiset equation
$ \bigcup_{j \neq i} \Delta(A_i,A_j)= \lambda(G \setminus \{0\})$ holds.
\end{definition} 

\begin{example}
The family of sets $\{\{0,1,2\},\{3,6,9\}\}$ form a classical $(10,2,3,1)$-SEDF in $\mathbb{Z}_{10}$.
\end{example}

Classical SEDFs have been much studied, with various constructions and non-existence results established.  Currently only one parameter set with $m>2$ is known, namely $(243,11,22,20)$; this was established independently in \cite{WenYanFuFen} and \cite{JedLi}. The only value of $\lambda$ for which infinite families of constructions are known is $\lambda=1$ (see \cite{PatSti} and \cite{HucJefNep}). Cyclotomic constructions are also known (see \cite{WenYanFuFen}).  Numerous non-existence results exist (see \cite{LeuLiPra} and \cite{HucNg} for recent summaries); it is conjectured that no further classical SEDFs with $m>2$ exist in abelian groups, and this is supported by computational evidence.

In \cite{PatSti}, the generalised versions were introduced:
\begin{definition}
Let $G$ be a group of order $v$ and $m \geq 1$. 
\begin{itemize}
\item A classical $(v,m; k_1,\ldots,k_m; \lambda)$-GEDF is a family of disjoint sets $A=\{A_1,\dots,A_m\}$ in $G$ such that
such that $|A_i| = k_i$ for $1 \leq i \leq m$ and the multiset equation $\cup_{i \neq j} \Delta(A_i, A_j) = \lambda (G \setminus \{0\})$ holds.
\item  A classical $(v,m;k_1,\dots,k_m;\lambda_1,\dots,\lambda_m$)-GSEDF is a family of disjoint sets $A=\{A_1,\dots,A_m\}$ in $G$ such that for each $1 \leq i \leq m,|A_i|=k_i$ and the multiset equation $\cup_{j \neq i} \Delta(A_i,A_j)=\lambda_i (G \setminus \{0\})$ holds.
\end{itemize}
\end{definition}
Classical GSEDFs have been studied in \cite{WenYanFuFen} and \cite{LuNiuCao}.  The structure of those partitioning $G$ or $G \setminus \{0\}$ is known, and there are some cyclotomic constructions with $m=2$ or by taking the collections of cyclotomic classes together with the zero singleton-set. It is known that for $m=3$ a classical $(v,m;k_1, k_2, k_2; \lambda_1,\lambda_2,\lambda_3)$-GSEDF does not exist when $k_1+k_2+k_3<v$, and for $k_1+k_2+k_3=v$ the parameters are very restricted, with either $\{k_1,k_2,k_3\}=\{1, \frac{v-1}{2}, \frac{v-1}{2}\}$ (examples known) or $\sqrt{v}<k_1<k_2<k_3$ (no examples known).

The following non-disjoint analogues of SEDFs were introduced in \cite{HucNg}, motivated by an application to optical orthogonal codes. While still satisfying extremely restrictive conditions, there is a much richer landscape of constructions for the non-disjoint versions.

\begin{definition}\label{def:nondisjointSEDF}
Let $G$ be a group of order $v$ and let $m > 1$. We say that a family of $k$-sets
$\{A_1, \dots, A_m\}$ in $G$ is a non-disjoint $(v, m, k, \lambda)$-SEDF (Strong External Difference Family) if, for each $1 \leq i \leq m$, the multiset equation $\cup_{j \neq i} \Delta(A_i,A_j)=\lambda G$ holds. 
\end{definition}

\begin{definition}
Let $G$ be a group of order $v$ and let $m > 1$. We say that a family of $k$-sets
$\{A_1, \dots, A_m\}$ in $G$ is a non-disjoint $(v, m, k, \lambda)$-PSEDF (Pairwise Strong External Difference Family) if, for each $1 \leq i \neq j \leq m$, the multiset equation $\Delta(A_i,A_j)=\lambda G$ holds. 
\end{definition}

The following basic results hold ((i) and (ii) are noted in \cite{HucNg}).
\begin{proposition}
\begin{itemize}
\item[(i)] A non-disjoint $(v,m,k,\lambda)$-PSEDF is a non-disjoint $(v,m,k,(m-1)\lambda)$-SEDF.
\item[(ii)] A non-disjoint $(v,2,k,\lambda)$-SEDF is a non-disjoint $(v,2,k,\lambda)$-PSEDF.
\item[(iii)] A collection of sets $\{A_1,\ldots,A_m\}$ in a group $G$ is a  non-disjoint $(v,m,k,\lambda)$-PSEDF if and only if, for each  $1 \leq i \neq j \leq m$, $\{A_i,A_j\}$ is a non-disjoint $(v,2,k,\lambda)$-PSEDF.
\end{itemize}
\end{proposition}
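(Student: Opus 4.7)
The three parts are all direct consequences of the definitions, and the proof strategy is to unpack the multiset equations carefully.

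For (i), the plan is to start with the PSEDF hypothesis $\Delta(A_i,A_j) = \lambda G$ for every ordered pair $i \neq j$, and then for a fixed $i$ take the multiset union over the $m-1$ indices $j \neq i$. Since each of these contributes $\lambda G$, the union is $(m-1)\lambda G$, which is precisely the defining condition of a non-disjoint $(v,m,k,(m-1)\lambda)$-SEDF.

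For (ii), with $m = 2$ there are only two choices of $i$, namely $i=1$ and $i=2$, and for each choice the union in the SEDF definition collapses to a single term. Thus the SEDF condition reads $\Delta(A_1,A_2) = \lambda G$ and $\Delta(A_2,A_1) = \lambda G$, which is exactly the PSEDF condition when $m=2$.

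For (iii), the argument is purely by definition-chasing in both directions. If $\{A_1,\ldots,A_m\}$ is a non-disjoint PSEDF then for any pair $i \neq j$ we have $\Delta(A_i,A_j) = \lambda G$ and $\Delta(A_j,A_i) = \lambda G$, so the two-set family $\{A_i,A_j\}$ satisfies the PSEDF condition with the same $\lambda$. Conversely, if every pair $\{A_i,A_j\}$ is a non-disjoint $(v,2,k,\lambda)$-PSEDF, then $\Delta(A_i,A_j) = \lambda G$ holds for every ordered pair $i \neq j$, which is the full PSEDF condition.

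There is no real obstacle here; all three items are immediate unpackings of Definitions \ref{def:nondisjointSEDF} and the PSEDF definition, with the only small care point being to keep track of ordered versus unordered pairs when multiset-summing in part (i).
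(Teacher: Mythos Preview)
Your proposal is correct and is exactly the natural definition-chasing argument; the paper itself does not even supply a proof, merely noting that (i) and (ii) appear in \cite{HucNg} and leaving (iii) implicit, so your writeup is in fact more complete than what the paper provides.
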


\begin{example}
The family of sets $\{ \{0,1,2\}\}, \{0,3,6\} \}$ form a non-disjoint $(9,2,3,1)$-PSEDF (and so a non-disjoint $(9,2,3,1)$-SEDF) in $\mathbb{Z}_9$. 
\end{example}

In this paper, we introduce the non-disjoint analogue of the classical GSEDF:

\begin{definition}
Let $G$ be a group of order $v$ and let $m \geq 1$. A non-disjoint $(v,m,k_1,\dots,k_m;\lambda_1,\dots,\lambda_m)$-GSEDF is a family of sets, $A=\{A_1,\dots,A_m\}$ in $G$ such that for each $1 \leq i \leq m,|A_i|=k_i$ and the following equation holds:
\[\cup_{j \neq i} \Delta(A_i,A_j)=\lambda_iG\]
\end{definition}

We present the pairwise version of this structure:
\begin{definition}
Let $G$ be a group of order $v$. We say that a family of sets $A = \{A_1,\dots,A_m\}$ in $G$ is a non-disjoint $(v,m,k_1,\dots,k_m)$-GPSEDF if $|A_i|=k_i$ ($1 \leq i \leq m$) and the following equation holds for all $1 \leq i \neq j \leq m$:
\[\Delta(A_i,A_j)=\lambda_{i,j}G \text{ where } \lambda_{i,j}=\frac{k_ik_j}{v}\]
Note that $\lambda_{j,i}=\lambda_{i,j}$.
\end{definition}

\begin{definition}
We define the $\lambda$-matrix of a non-disjoint ($v,m,k_1,\dots,k_m$)-GPSEDF to be the $m \times m$ matrix $[a_{ij}]$, with $a_{ii}=0$ and $a_{ij}=\lambda_{i,j}=\frac{k_ik_j}{v}$ ($1 \leq i,j \leq m$).
\end{definition}

The following result presents a known classical GSEDF construction, and an analogous new non-disjoint GSEDF construction.
\begin{proposition}\label{GSEDF:basic}
\begin{itemize}
\item[(i)] In $\mathbb{Z}_{k_1 k_2+1}$, the sets $A_1=\{0,1,2 \ldots, k_1-1\}$ and $A_2=\{k_1,2k_1,\ldots, k_1 k_2\}$ form a classical  $(k_1 k_2+1, 2; k_1, k_2; 1,1)$-GSEDF.
\item[(ii)] In $\mathbb{Z}_{k_1 k_2}$, the sets $B_1=\{0,1,2 \ldots, k_1-1\}$ and $B_2=\{0, k_1,2k_1,\ldots, k_1 (k_2-1)\}$ form a non-disjoint $(k_1 k_2, 2; k_1, k_2; 1,1)$-GSEDF which is a non-disjoint $(k_1 k_2,2,k_1,k_2)$-GPSEDF.
\end{itemize}
\end{proposition}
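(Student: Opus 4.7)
The plan is to verify each statement by computing the relevant multiset of differences directly. In both parts $|A_1||A_2| = k_1 k_2$, which equals $|G \setminus \{0\}|$ in part (i) and $|G|$ in part (ii); hence it suffices to show that $(a,b) \mapsto a-b$ is injective on $A_1 \times A_2$ (resp.\ $B_1 \times B_2$), and, in part (i), avoids $0$. The reverse direction $\Delta(A_2, A_1) = -\Delta(A_1, A_2)$ (and similarly for $B_1, B_2$) is then automatic, since both $G \setminus \{0\}$ and $G$ are closed under negation.

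For part (i), I would parametrise $\Delta(A_1, A_2) = \{i - jk_1 : 0 \leq i \leq k_1-1,\ 1 \leq j \leq k_2\}$. If $i_1 - j_1 k_1 \equiv i_2 - j_2 k_1 \pmod{k_1 k_2 + 1}$, then, writing $a = i_1 - i_2$ and $b = j_1 - j_2$, the integer $a - bk_1$ satisfies $|a - bk_1| \leq (k_1 - 1) + (k_2 - 1)k_1 = k_1 k_2 - 1 < v$, so $a = bk_1$; since $|a| < k_1$ this forces $b = 0$ and then $a = 0$. Moreover $0 \notin \Delta(A_1, A_2)$, because for $j \geq 1$ the integer $i - jk_1$ lies in $[-k_1 k_2, -1]$, whose absolute values are strictly less than $v = k_1 k_2 + 1$. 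Thus $\Delta(A_1, A_2) = G \setminus \{0\}$, which is the GSEDF condition for $i=1$, and the $i=2$ condition follows by the negation remark above.

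For part (ii), the analogous parametrisation gives $\Delta(B_1, B_2) = \{i - jk_1 : 0 \leq i \leq k_1 - 1,\ 0 \leq j \leq k_2 - 1\}$ as a multiset in $\mathbb{Z}_{k_1 k_2}$. The cleanest injectivity argument here is to reduce modulo $k_1$: a congruence $i_1 - j_1 k_1 \equiv i_2 - j_2 k_1 \pmod{k_1 k_2}$ forces $i_1 \equiv i_2 \pmod{k_1}$, so $i_1 = i_2$ by the range of $i$, and then $(j_1 - j_2) k_1 \equiv 0 \pmod{k_1 k_2}$ forces $j_1 = j_2$. Hence $\Delta(B_1, B_2) = G$ as a multiset, and by symmetry so does $\Delta(B_2, B_1)$. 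Since $k_1 k_2 / v = 1$, this simultaneously establishes the GPSEDF equation (with $\lambda_{1,2} = 1$) and the GSEDF equations (with $\lambda_1 = \lambda_2 = 1$). There is no real obstacle; the only delicate point throughout is the injectivity bound in part (i), where one must use $|b| \leq k_2 - 1$ rather than $k_2$ to land strictly below $v$.
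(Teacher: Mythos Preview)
Your proof is correct. The paper itself gives essentially no argument here: for (i) it cites an external reference, and for (ii) it simply asserts that ``direct checking shows that $\Delta(B_1,B_2)=\Delta(B_2,B_1)=\mathbb{Z}_{k_1 k_2}$''; your injectivity computations are exactly the direct check being alluded to, so the approaches coincide (with yours being self-contained rather than relying on the citation for part (i)).
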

\begin{proof}
Part (i) is Theorem 4.4 of \cite{LuNiuCao}.  For (ii), direct checking shows that $\Delta(B_1,B_2)=\Delta(B_2,B_1)=\mathbb{Z}_{k_1 k_2}$.
\end{proof}

\begin{proposition}
\begin{itemize}
\item[(i)] A non-disjoint $(v,m,k,\lambda)$-SEDF is a non-disjoint\\ $(v,m; k,\dots,k;\lambda,\dots,\lambda)$-GSEDF (with $m$ $\lambda$s and $m$ $k$s).
\item[(ii)] A non-disjoint $(v,m,k,\lambda)$-PSEDF is a non-disjoint $(v,m,k,\dots,k)$-GPSEDF with $m \times m$ $\lambda$-matrix $M=[a_{ij}]$, where
$a_{ii}=0$ and $a_{ij}=\frac{k^2}{v}$ for $i \neq j$.
\item[(iii)] A non-disjoint $(v,2;k_1,k_2;\lambda_1,\lambda_2)$-GSEDF is a non-disjoint $(v,2,k_1,k_2)$-GPSEDF. (Here $\lambda_1=\lambda_2$).
\item[(iv)] A non-disjoint $(v,m,k_1,\dots,k_m)$-GPSEDF is a non-disjoint\\ $(v,m,k_1,\dots,k_m;\sum_{j \neq 1} \lambda_{1,j},\dots,\sum_{j \neq m} \lambda_{m,j})$-GSEDF.
\item[(v)] A collection of sets $\{A_1,\ldots,A_m\}$ in a group $G$ is a non-disjoint $(v,m,k_1,\ldots,k_m)$-GPSEDF if and only if $\{A_i,A_j\}$ is a non-disjoint $(v,2,k_i,k_j)$-GPSEDF for each $i \neq j$.
\end{itemize}
\end{proposition}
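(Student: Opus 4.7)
The plan is to verify each of the five assertions directly from the definitions, with the only non-trivial ingredient being a counting argument that pins down the pairwise $\lambda$-values in terms of the set sizes.

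For parts (i) and (iv), the argument is essentially a rewriting. In (i), the SEDF equation $\bigcup_{j \neq i}\Delta(A_i,A_j) = \lambda G$ holding for every $i$, combined with $|A_i|=k$ for all $i$, is exactly the GSEDF condition with constant parameters $k_1=\cdots=k_m=k$ and $\lambda_1=\cdots=\lambda_m=\lambda$. In (iv), I would take the GPSEDF equations $\Delta(A_i,A_j)=\lambda_{i,j}G$ for $j\neq i$ and form their multiset union over $j\neq i$ for each fixed $i$, yielding $\bigcup_{j\neq i}\Delta(A_i,A_j) = \bigl(\sum_{j\neq i}\lambda_{i,j}\bigr)G$, which is the GSEDF condition with $\lambda_i=\sum_{j\neq i}\lambda_{i,j}$.

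The main (and only) real computation lies in establishing the correct $\lambda$-values in (ii) and (iii). For (ii), a PSEDF satisfies $\Delta(A_i,A_j)=\lambda G$ for all $i\neq j$; comparing multiset sizes on both sides gives $k^2 = |A_i||A_j| = \lambda v$, so $\lambda = k^2/v$. Substituting this into the GPSEDF condition shows that the $\lambda$-matrix has the stated form. For (iii), with $m=2$ the GSEDF condition reduces to the two equations $\Delta(A_1,A_2)=\lambda_1 G$ and $\Delta(A_2,A_1)=\lambda_2 G$; counting multiset sizes gives $\lambda_1 = \lambda_2 = k_1k_2/v$, which is precisely the $\lambda_{i,j}=k_ik_j/v$ required by the GPSEDF definition. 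Thus the collection is a $(v,2,k_1,k_2)$-GPSEDF, and we incidentally recover $\lambda_1=\lambda_2$.

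For (v), the forward direction is immediate: if $\{A_1,\dots,A_m\}$ is a GPSEDF then for any fixed pair $i\neq j$ we have $\Delta(A_i,A_j)=\lambda_{i,j}G$ with $\lambda_{i,j}=k_ik_j/v$, which says exactly that $\{A_i,A_j\}$ is a $(v,2,k_i,k_j)$-GPSEDF. Conversely, if each pair $\{A_i,A_j\}$ is a $(v,2,k_i,k_j)$-GPSEDF then for every $i\neq j$ the relation $\Delta(A_i,A_j)=(k_ik_j/v)G$ holds, which is precisely the defining condition of a $(v,m,k_1,\dots,k_m)$-GPSEDF. I do not anticipate any obstacle here; every step is a direct unpacking of definitions, with the size-counting in (ii)--(iii) being the only place where anything beyond bookkeeping occurs.
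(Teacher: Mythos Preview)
Your proposal is correct and matches the paper's treatment: the paper states this proposition without proof, implicitly regarding each part as immediate from the definitions, and what you have written is precisely the straightforward unpacking (with the size-counting in (ii) and (iii)) that justifies this.
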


\begin{example}
In $\mathbb{Z}_{12}$, the sets $\{0,1,2,3,4,5\}$, $\{0,1,2,6,7,8\}$ and $\{0,1,3,4,6,7,9,10\}$ form a non-disjoint $(12,3,6,6,8)$-GPSEDF with $\lambda_{1,2}=3=\lambda_{2,1}, \lambda_{2,3}=4=\lambda_{3,2}, \lambda_{1,3}=4=\lambda_{3,1}$.  Hence these sets also form a non-disjoint $(12,3; 6,6,8; 7,7,8)$-GSEDF.  Note this is in contrast to the situation with classical GSEDFs where $|\{k_1,k_2,k_3\}|=2$ implies $\{k_1,k_2,k_3\}=\{1, \frac{v-1}{2}, \frac{v-1}{2}$.)
\end{example}

\begin{example}
In $\mathbb{Z}_{30}$, the sets $\{0,1,2,3,4,5,6,7,8,9,10,11,12,13,14,15,16,17\}$, \\ $\{0,1,2,6,7,8,12,13,14,18,19,20,24,25,26\}$ and \\ $\{0,1,3,4,6,7,9,10,12,13,15,16,18,19,21,22,24,25,27,28\}$ form a non-disjoint $(30,3,18,15,20)$-GPSEDF with $\lambda_{1,2}=9=\lambda_{2,1}, \lambda_{2,3}=10=\lambda_{3,2}, \lambda_{1,3}=12=\lambda_{3,1}$.  Hence these sets also form a non-disjoint $(30,3; 18,15,20; 21,19,22)$-GSEDF.  Note that no examples of classical GSEDFs are known with all set-sizes different.
\end{example}

We introduce a multiset version of these structures:

\begin{definition}
Let $G$ be a group of order $v$. Let $m>1$. We say a family of multisets $A=\{A_1,\dots,A_m\}$ in $G$ is a:
\begin{itemize}
\item[(i)] non-disjoint ($v,m,k_1,\dots,k_m,\lambda_1,\dots,\lambda_m$)-MGSEDF (multiset generalised strong external difference family) if for each $1 \leq i \leq m$, $|A_i|=k_i$, and the following equation holds:
\[\cup_{j\neq i} \Delta(A_i,A_j) = \lambda_i G\]
\item[(ii)] non-disjoint ($v,m,k_1,\dots,k_m$)-MGPSEDF (multiset generalised pairwise strong external difference family) if $|A_i|=k_i$ ($1 \leq i \leq m$) and the following equation holds for all $1 \leq i \neq j \leq m$:
\[\Delta(A_i,A_j)= \lambda_{i,j}G\text{ where } \lambda_{i,j}=\frac{k_ik_j}{v}\]
\end{itemize}
\end{definition}

Since any set without repeated elements may be viewed as a multiset, this definition includes the original versions (eg a non-disjoint GSEDF is an example of a non-disjoint MGSEDF).

In this paper, we will write a multiset $M$ as $\{a_1,b_1;a_2,b_2;\dots;a_n,b_n\}$, where this indicates that $M$ comprises  $b_1$ occurrences of $a_1$, $b_2$ occurrences of $a_2$ and so on.

\begin{example}
The multisets 
\begin{itemize}
    \item $\{0,2;1,2;2,2;3,2;4,2;5,2\}$, 
    \item $\{0,3;1,3;2,3;3,2;4,2;5,2;6,3;7,3;8,3;9,2;10,2;11,2\}$
    \item $\{0,4;1,5;3,4;4,5;6,4;7,5;9,4;10,5\}$ 
    \end{itemize}
    form a non-disjoint ($12,3,12,30,36$)-MGPSEDF in $\mathbb{Z}_{12}$. Here $\lambda_{1,2}=30=\lambda_{2,1}$, $\lambda_{1,3}=36=\lambda_{3,1}$ and $\lambda_{2,3}=90=\lambda_{3,2}$
\end{example}

Finally, we define classical versions of these new concepts:
\begin{definition}
\begin{itemize}
\item[(i)] A \emph{classical} $(v,m,k,\lambda)$-PSEDF is a family of disjoint $k$-sets $\{A_1, \dots, A_m\}$ in $G$ such that, for each $1 \leq i \neq j \leq m$, the multiset equation $\Delta(A_i,A_j)=\lambda (G \setminus \{0\})$ holds. 
\item[(ii)]  A \emph{classical} $(v,m,k_1,\dots,k_m)$-GPSEDF is a family of disjoint $k$-sets $A = \{A_1,\dots,A_m\}$ in $G$ such that $|A_i|=k_i$ ($1 \leq i \leq m)$ and the following equation holds for all $1 \leq i \neq j \leq m$:
\[\Delta(A_i,A_j)=\lambda_{i,j}(G \setminus \{0\}) \text{ where } \lambda_{i,j}=\frac{k_ik_j}{v-1}\]
\item[(iii)] The \emph{classical} MGSEDF and MGPSEDF are defined in the same way as the classical GSEDF and GPSEDF respectively, except that $A=\{A_1,\ldots, A_m\}$ is now a family of \emph{multisets} rather than a family of sets in each case.
\end{itemize}
\end{definition}

\begin{example}
In $\mathbb{Z}_{13}$, the multisets $$\{1,2;3,2;4,2;9,2;10,2;12,2\}, \{2,3;5,3;6,3;7,3;8,3;11,3\}$$ 
form a classical ($13,2,12,18$)-MGPSEDF and hence also a classical ($13,2,12,18$)-MGSEDF with $\lambda_{1,2}=18=\lambda_{2,1}$.
\end{example}

In \cite{VeiSti}, the following combinatorial object is introduced.

\begin{definition}
Let $G$ be an abelian group of order $v$. A (classical) $(v,m,k;\lambda)$-strong circular external difference family (or (classical)) $(v, m, k; \lambda)$-SCEDF) is a set of $m$ disjoint $k$-subsets, $A = \{A_0, \ldots, A_{m-1}\}$ of $G$, such that the following multiset equation holds for every $j$ $(0 \leq j \leq m-1)$:
$$ \Delta(A_{j+1 \mod m}, A_j ) = \lambda(G \setminus \{0\})$$
\end{definition}

Clearly any classical $(v,2,k,\lambda)$-SEDF is an example of a classical $(v,2,k;\lambda)$-SCEDF.  The following structural theorem is presented in \cite{PatSti2}.
\begin{theorem}
Let $G$ be an abelian group of order $v$. A set of $m$ disjoint $k$-subsets of $G$, say $A=\{A_0, \ldots,A_{m-1} \}$ is a classical $(v,m,k;\lambda)$-SCEDF if and only if
$\{A_i, A_{i+1 \mod m}\}$ is a classical $(v, 2, k, \lambda)$-SEDF for all $0 \leq i \leq m-1$.
\end{theorem}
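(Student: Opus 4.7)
The plan is to exploit two simple symmetries: that difference multisets satisfy $\Delta(B,A) = -\Delta(A,B)$ (where $-M$ denotes the multiset obtained by negating every element of $M$, and this identity uses commutativity), and that $G \setminus \{0\}$ is invariant under negation, so $-\lambda(G \setminus \{0\}) = \lambda(G \setminus \{0\})$. Together these show that in an abelian group, the SEDF condition $\Delta(A_i,A_j) = \lambda(G \setminus \{0\})$ for an ordered pair automatically gives the same identity for the reversed pair.

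For the ``only if'' direction, suppose $A = \{A_0, \ldots, A_{m-1}\}$ is a classical $(v,m,k;\lambda)$-SCEDF, so $\Delta(A_{j+1 \bmod m}, A_j) = \lambda(G \setminus \{0\})$ for every $j$. Applying the negation observation I would conclude $\Delta(A_j, A_{j+1 \bmod m}) = -\Delta(A_{j+1 \bmod m}, A_j) = \lambda(G \setminus \{0\})$ as well. Hence both multiset equations demanded by Definition \ref{def:SEDF} (with $m=2$) hold for the pair $\{A_j, A_{j+1 \bmod m}\}$, which is therefore a classical $(v,2,k,\lambda)$-SEDF.

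For the ``if'' direction, assume that for each $i$ the pair $\{A_i, A_{i+1 \bmod m}\}$ is a classical $(v,2,k,\lambda)$-SEDF. By Definition \ref{def:SEDF} applied to the index $i+1 \bmod m$ of this two-set family, we obtain $\Delta(A_{i+1 \bmod m}, A_i) = \lambda(G \setminus \{0\})$ for all $i$, which is precisely the defining equation of a classical $(v,m,k;\lambda)$-SCEDF. (Disjointness of the sets is assumed in both hypotheses, so nothing additional is needed.)

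There is no real obstacle here: the entire content is the negation symmetry in abelian groups together with the symmetry of $G \setminus \{0\}$ under negation. The only point worth flagging is that commutativity is used essentially in the identity $\Delta(B,A) = -\Delta(A,B)$ — in a non-abelian group one would instead have $\Delta(B,A) = \Delta(A,B)^{-1}$ (elementwise inversion), and the argument would still go through provided $G \setminus \{1\}$ is inversion-closed, which it is. So the hypothesis ``abelian'' is actually not strictly required for this equivalence, though it matches the ambient setting of the surrounding definitions.
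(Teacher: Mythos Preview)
The paper does not supply its own proof of this theorem; it is quoted from \cite{PatSti2} and stated without argument. Your proof is correct and is essentially the natural one: the only content is the multiset identity $\Delta(B,A) = -\Delta(A,B)$ together with the fact that $G \setminus \{0\}$ is closed under negation, so the one-directional SCEDF condition $\Delta(A_{j+1}, A_j) = \lambda(G \setminus \{0\})$ is equivalent to the two-directional condition required for $\{A_j, A_{j+1}\}$ to be a classical $(v,2,k,\lambda)$-SEDF. Your aside that the abelian hypothesis is not strictly needed (since $\Delta(B,A) = \Delta(A,B)^{-1}$ in any group and $G \setminus \{e\}$ is inversion-closed) is also correct.
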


We observe that the existence of a classical $(v,m,k,\lambda)$-PSEDF implies the existence of a classical $(v,m,k;\lambda)$-SCEDF.  However, it is proved in Theorem 6 of \cite{WuYanFen} that no classical abelian $(v,m,k;\lambda)$-SCEDF can exist for $m \geq 3$.  This leads to the following non-existence result for classical PSEDFs:
\begin{theorem}
In an abelian group of order $v$, there does not exist a classical $(v,m,l,\lambda)$-PSEDF with $m \geq 3$.
\end{theorem}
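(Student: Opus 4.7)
The plan is to derive the stated non-existence directly from the two facts assembled in the paragraph immediately preceding the theorem: the implication ``PSEDF $\Rightarrow$ SCEDF'' and Theorem 6 of \cite{WuYanFen}. The argument is essentially a one-step reduction, so there is no genuine technical obstacle; the task is simply to make the reduction explicit and invoke the cited non-existence result.

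First I would unpack the definitions to verify the implication. If $\{A_1,\ldots,A_m\}$ is a classical $(v,m,k,\lambda)$-PSEDF in an abelian group $G$, then $\Delta(A_i,A_j)=\lambda(G\setminus\{0\})$ for \emph{every} ordered pair $i\neq j$. In particular, relabelling the sets as $A_0,\ldots,A_{m-1}$ (any bijection will do) and restricting attention to the $m$ consecutive pairs $(j+1\bmod m,\, j)$ yields
\[
\Delta(A_{j+1\bmod m},A_j)=\lambda(G\setminus\{0\}),\qquad 0\le j\le m-1,
\]
which is precisely the defining condition of a classical $(v,m,k;\lambda)$-SCEDF. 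Thus a classical PSEDF is always a classical SCEDF on any cyclic ordering of its sets.

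Next I would suppose, for contradiction, that a classical $(v,m,k,\lambda)$-PSEDF exists in an abelian group $G$ of order $v$ with $m\ge 3$. By the observation above, the same family is a classical $(v,m,k;\lambda)$-SCEDF in $G$. Since $G$ is abelian and $m\ge 3$, Theorem 6 of \cite{WuYanFen} (which rules out abelian SCEDFs in exactly this range) yields a contradiction, completing the proof.

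The only point that might require a cautionary sentence is the observation that the PSEDF property is symmetric enough to guarantee the SCEDF condition on \emph{every} ordering, so the choice of cyclic labelling is harmless; beyond this, the argument is immediate from the cited results, and no case analysis, character-theoretic computation, or combinatorial enumeration is needed.
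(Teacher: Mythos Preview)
Your proposal is correct and follows exactly the approach the paper uses: the preceding paragraph already records that a classical PSEDF yields a classical SCEDF, and then invokes Theorem~6 of \cite{WuYanFen} to rule out the abelian case for $m\ge 3$. You have simply made that one-line reduction explicit, so there is nothing to add.
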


The next result relates the multiset/non-multiset structures.
\begin{theorem}
Let $A = \{A_1,\dots,A_m\}$ be a family of sets in a group $G$.
\begin{itemize}
\item[(i)] $A$ is a non-disjoint GSEDF $\Leftrightarrow \{A_i, \cup_{j \neq i} A_j\}$ is a non-disjoint MGPSEDF for all $1 \leq i \leq m$.
\item[(ii)] $A$ is a classical GSEDF $\Leftrightarrow \{A_i, \cup_{j \neq i} A_j\}$ is a classical GSEDF for all $1 \leq i \leq m$
\end{itemize}
\end{theorem}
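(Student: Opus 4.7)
The plan is to reduce both parts to two routine observations. First, for any (multi)set $A_i$ in $G$ and any finite collection of (multi)sets $\{A_j\}_{j \in J}$, one has the multiset identity
\[\Delta\Bigl(A_i, \bigcup_{j \in J} A_j\Bigr) = \bigcup_{j \in J} \Delta(A_i, A_j),\]
provided the union on the left is interpreted as a multiset union (i.e.\ with multiplicity). Second, the negation map $x \mapsto -x$ is a bijection of $G$ which preserves $G \setminus \{0\}$ and satisfies $\Delta(Y, X) = -\Delta(X, Y)$; hence $\Delta(X, Y) = \lambda G$ (respectively $\lambda(G \setminus \{0\})$) if and only if $\Delta(Y, X) = \lambda G$ (respectively $\lambda(G \setminus \{0\})$).

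For part (i), I would write $B_i := \bigcup_{j \ne i} A_j$ as a multiset. The first observation shows that the GSEDF condition $\bigcup_{j \ne i} \Delta(A_i, A_j) = \lambda_i G$ is identical to $\Delta(A_i, B_i) = \lambda_i G$; the second then supplies $\Delta(B_i, A_i) = \lambda_i G$ for free. A short count of differences forces $\lambda_i = k_i \lvert B_i \rvert / v = k_i \sum_{j \ne i} k_j / v$, which is exactly the value required in the MGPSEDF definition, so $\{A_i, B_i\}$ is a non-disjoint MGPSEDF. The converse is the same chain of equivalences read in reverse.

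Part (ii) follows the same template with $G \setminus \{0\}$ in place of $G$, together with a brief disjointness check. In the forward direction, $A$ being classical implies the $A_j$ for $j \ne i$ are pairwise disjoint (so $B_i$ is a genuine set) and $A_i \cap B_i = \emptyset$, so $\{A_i, B_i\}$ is a family of two disjoint sets; the $\Delta$-identities then make it a classical two-set GSEDF. Conversely, if every $\{A_i, B_i\}$ is a classical GSEDF then $A_i \cap B_i = \emptyset$ for each $i$ and each $B_i$ is a set, which together yield pairwise disjointness of the whole family $A$; the GSEDF relation on $A$ is then the $\Delta(A_i, B_i)$-equation once more. There is no substantive obstacle here: the only genuine care is in choosing the correct (multi)set interpretation of $\bigcup_{j \ne i} A_j$ in each case, and in verifying in (i) that the $\lambda_i$ forced by counting agrees with the MGPSEDF convention $\lambda_{i,j} = k_i k_j / v$.
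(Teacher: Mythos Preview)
Your proposal is correct and follows essentially the same approach as the paper: the core step in both is the multiset identity $\biguplus_{j\ne i}\Delta(A_i,A_j)=\Delta(A_i,\biguplus_{j\ne i}A_j)$, with the observation in (ii) that disjointness makes $\cup_{j\ne i}A_j$ a genuine set. Your write-up is simply more explicit than the paper's two-line proof, spelling out the reversal $\Delta(Y,X)=-\Delta(X,Y)$, the forced value of $\lambda_i$, and the disjointness bookkeeping for the converse in (ii).
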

\begin{proof}
For any $1 \leq i \leq m$, observe that the multiset union $\uplus_{j \neq i} \Delta(A_i, A_j)=\Delta(A_i, \uplus_{j \neq i} A_j)$. For (ii), note that $\cup_{j \neq i} A_j$ is a set rather than a multiset by disjointness.
\end{proof}

\subsection{Basic structural results}
In this section, we present some useful structural results which will be used in the paper. 

We will use the following notation: for a set $S$, $\overline{S}=G \setminus S$ will denote the complement of $S$ in $G$.  For $g \in G$ and $S \subseteq G$, we denote $g+S=\{g+s: s \in S\}$.  For $A,B \subseteq G$, $A \uplus B$ denotes the multiset union of $A$ and $B$.

The next theorem allows us to obtain new examples from old in various natural ways.

\begin{theorem} \label{thm:combinations}
Let $A=\{A_1,\ldots,A_m\}$ be a non-disjoint $(v,m,k_1,k_2,\ldots,k_m)$-GPSEDF in a group $G$ of order $v$, written additively (we do not assume $G$ is abelian).  Then:
\begin{itemize}
\item[(i)] For $1 \leq j \leq m$, $\{A_1,A_2,\ldots, \overline{A_j}, \ldots, A_m\}$ is a non-disjoint $(v,m,k_1,k_2,\ldots, v-k_j, \ldots, k_m)$-GPSEDF.
\item[(ii)] $\{\overline{A_1}, \overline{A_2},\dots, \overline{A_m} \}$ is a non-disjoint $(v,m,v-k_1,v-k_2,\dots,v-k_m)$-GPSEDF in $G$.
\item[(iii)] For $g \in G$, $\{ A_1,\dots, A_{m-1}, g + A_m \}$ is a non-disjoint $(v,m,k_1,k_2,\ldots,k_m)$-GPSEDF.
\item[(iv)] Let $B=\{A_1,\dots,A_{m-1},B_m\}$ be a non-disjoint $(v,m,k_1,\dots, k_{m-1}, l_m)$-MGPSEDF in $G$ (i.e. $B$ has the same sets as $A$ except for one set). Then 
$$C=\{A_1,A_2,\dots,A_{m-1},A_m \uplus B_m\}$$ is a  non-disjoint $(v,m,k_1,\dots, k_{m-1},k_m+ l_m)$-MGPSEDF.   If $A$ and $B$ are non-disjoint GPSEDFs and $A_m$ and $B_m$ are disjoint sets, then $C$ is a non-disjoint GPSEDF.
\item[(v)]  Let $g_1,\dots,g_n \in G$ for some $n \in \mathbb{N}$. Then $\{A_1,\dots,A_{m-1},\uplus_{i=1}^n (g_i+A_m))$ is a non-disjoint $(v,m,k_1,\dots,k_{m-1},nk_m)$-MGPSEDF. If all the sets $\{g_i+A_m\}$ for $1 \leq i \leq n$ are pairwise disjoint, then this construction is in fact a non-disjoint GPSEDF. 
\end{itemize}
\end{theorem}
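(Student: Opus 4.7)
The plan is to treat all five parts uniformly, using two elementary facts about the external difference multiset: first, for any subset $S$ of $G$, $\Delta(S, G) = |S| \cdot G = \Delta(G, S)$ (each element of $G$ appears exactly $|S|$ times, since for fixed $s$ the map $g \mapsto s - g$ is a bijection of $G$); and second, $\Delta$ distributes over multiset union in either argument, i.e.\ $\Delta(X, Y \uplus Z) = \Delta(X, Y) \uplus \Delta(X, Z)$ and symmetrically on the left. Since the GPSEDF hypothesis is precisely $\Delta(A_i, A_j) = (k_i k_j / v) G$ for $i \neq j$, each part reduces to computing the new differences and checking that the new $\lambda$-values are of the form $(\text{new } k_i)(\text{new } k_j)/v$.

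For (i), I would decompose $G = A_j \uplus \overline{A_j}$ and use additivity to write $\Delta(A_i, G) = \Delta(A_i, A_j) \uplus \Delta(A_i, \overline{A_j})$, which rearranges to $\Delta(A_i, \overline{A_j}) = k_i G - (k_i k_j / v) G = (k_i(v - k_j)/v) G$ for each $i \neq j$; a symmetric computation handles $\Delta(\overline{A_j}, A_i)$, and all other pairs $\Delta(A_i, A_l)$ with $i, l \neq j$ are unchanged. Part (ii) then follows by iterating (i) across all $m$ indices (or, equivalently, by a direct computation in the group ring confirming that $(v-k_i)(v-k_j)/v$ is the correct new coefficient).

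For (iii), the identities $\Delta(g + A_m, A_i) = g + \Delta(A_m, A_i)$ and $\Delta(A_i, g + A_m) = \Delta(A_i, A_m) - g$ (which hold in any group, since $-(g+b) = -b - g$) together with the invariance of the multiset $\lambda G$ under both left and right translation show at once that each difference involving the translated set remains the same multiple of $G$, while all other differences are untouched. Part (iv) is a direct application of additivity: for $i \neq m$, $\Delta(A_i, A_m \uplus B_m) = (k_i k_m / v + k_i l_m / v) G = (k_i(k_m + l_m)/v) G$, which is the required MGPSEDF parameter, and the reverse direction is analogous; pairs not involving the $m$-th set are unchanged. The resulting family is a set-valued GPSEDF exactly when $A_m \uplus B_m$ is itself a set, i.e.\ when $A_m$ and $B_m$ are disjoint. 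Part (v) is obtained by combining (iii) and (iv): each translate $g_i + A_m$ together with $A_1, \ldots, A_{m-1}$ is a GPSEDF by (iii), and (iv) applied iteratively glues the $m$-th sets into a single (multi)set, with pairwise disjointness of the translates ensuring that in the special case described the final output is genuinely a set.

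I do not expect any serious obstacle: each part is essentially a one-line multiset identity combined with an invariance property of $G$ under the relevant operation (complementation, translation, disjoint union). The only place that deserves a moment of care is the non-abelian case of (iii) and (v), where one must check that $\Delta(A_i, g + A_m)$ is a right translate rather than a left translate of $\Delta(A_i, A_m)$; but since $\lambda G$ is invariant under both, this distinction does not affect the conclusion. The remaining bookkeeping is just verifying that the fractional coefficient $k_i k_j / v$ arising in each new setting matches $(\text{new } k_i)(\text{new } k_j)/v$, which is immediate algebra.
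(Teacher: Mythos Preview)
Your proposal is correct and follows essentially the same route as the paper's own proof: the paper handles (i) by the same complement decomposition (writing $\Delta(\overline{A_j},A_i)=|A_i|G\setminus \lambda_{j,i}G$), derives (ii) by iterating (i), computes (iii) by the same left/right translation identities you describe, proves (iv) by the same additivity of $\Delta$ over multiset union, and obtains (v) by combining (iii) and (iv). Your explicit remark about the non-abelian subtlety in (iii) matches the paper's computation $\Delta(A_j,g+A_m)=\Delta(A_j,A_m)+(-g)$.
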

\begin{proof}
(i) Let $i \neq j$, $1 \leq i \leq m$.  Then
\begin{align*}
\Delta(\overline{A_j},A_i) &= \Delta(G \setminus A_j, A_i)\\
&=\{g-b: g \in G, g \notin A_j, b \in A_i\}\\
&=\{g-b: g \in G, b \in A_i\}\setminus\{g-b: g \in A_j, b \in A_i\}\\
&=|A_i|G\setminus \lambda_{j,i} G\\
&= (k_i-\lambda_{j,i})G
\end{align*}
By a similar argmuent, $\Delta(A_i,\overline{A_j})=(k_i-\lambda_{i,j})G$.  All other external differences are the same as in $A$. The result follows.\\
(ii) This follows by repeating the method from (i) $m$ times for each set in $A$.\\
(iii) Let $g \in G$ and let $1 \leq j \leq m-1$.  Consider
\begin{align*}
    \Delta(g+A_m,A_j)&=\{g+a-c:a \in A_m, c \in A_j\}\\
    &=g+\{a-c:a \in A_m, c \in A_j\}\\
&=g+ \Delta(A_m,A_j)\\
    &= \lambda_{m,j} G
\end{align*}
By a similar argument, $\Delta(A_j,g+A_m)= \{c+(-a-g): c \in A_j,a \in A_m\}= \Delta(A_j,A_m)+(-g)=\lambda_{j,m}G$ and the result follows. \\
(iv)  Let $1 \leq i \leq m-1$.  We show that $\Delta(A_i,A_m \uplus B_m)=\lambda_{i,m}$  for some $\lambda_{i,m} \in \mathbb{N}$.
\begin{align*}
\Delta(A_i,A_m \uplus B_m)&=\{a_i-a_m:a_i \in A_i, a_m \in A_m \uplus B_m\}\\
&=\{a_i-a_m:a_i \in A_i, a_m \in A_m\} \uplus \{a_i-b_m: a_i \in A_i, a_m \in B_m\}\\
&=\Delta(A_i,A_m) \uplus \Delta(A_i,B_m)\\
&=\lambda_{i,m} G +\mu_{i,m} G \quad \text{for some } \lambda_{i,m}, \mu_{i,m} \in \mathbb{N}\\
&=(\lambda_{i,m} + \mu_{i,m}) G
\end{align*}
By a similar argument, $\Delta(A_m \uplus B_m,A_i) = (\lambda_{m,i} + \mu_{m,i}) G$. Since  $|A_m \uplus B_m| = |A_m| + |B_m|$, the result follows.  If $A$ and $B$ are non-disjoint GPSEDFs and $A_m$ and $B_m$ are disjoint then $A_m \uplus B_m=A_m \cup B_m$ and so $C$ is also a non-disjoint GPSEDF.\\
(v) Applying parts (iii) and (iv), $\{A_1,\dots,A_{m-1},\uplus_{i=1}^n (g_i + A_m)\}$ is a non-disjoint MGSEDF, which is a non-disjoint GPSEDF if $A_m$ and $B_m$ are disjoint.
\end{proof}

\begin{theorem}\label{thm:cosets}
Let $G$ be a finite abelian group and let $H$ be a subgroup of $G$.\\
If $A=\{a_1,\ldots,a_r\}$ and $B=\{b_1,\ldots,b_s\}$ form a non-disjoint $(|G/H|,2,r,s)$-GPSEDF in $G/H$, then $A'=\cup_{i=1}^r (a_i+H)$ and $B'=\cup_{j=1}^s (b_j+H)$ form a non-disjoint $(|G|, 2, r|H|,s|H|)$-GPSEDF in $G$.  
\end{theorem}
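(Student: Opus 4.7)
The plan is to compute $\Delta(A',B')$ directly and exploit the fact that within the subgroup $H$ the difference multiset collapses to a uniform multiple of $H$, reducing the problem to the given GPSEDF property in $G/H$. First I would pick coset representatives $a_i \in G$ for the elements of $A \subseteq G/H$ (and $b_j$ for the elements of $B$), so that $A' = \biguplus_{i=1}^r (a_i + H)$ is a disjoint union of $r$ cosets of $H$ and $B' = \biguplus_{j=1}^s (b_j + H)$ is a disjoint union of $s$ cosets; in particular $|A'|=r|H|$ and $|B'|=s|H|$, matching the claimed parameters.

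Next I would use that each element of $A'$ is uniquely of the form $a_i+h_1$ with $h_1 \in H$ (and similarly for $B'$) to split
$$\Delta(A',B') = \biguplus_{i,j} \bigl\{(a_i+h_1)-(b_j+h_2) : h_1, h_2 \in H\bigr\} = \biguplus_{i,j} \bigl((a_i-b_j) + \Delta(H,H)\bigr).$$
Since $H$ is a subgroup, for each $h \in H$ there are exactly $|H|$ pairs $(h_1,h_2) \in H \times H$ with $h_1-h_2=h$, so $\Delta(H,H) = |H| \cdot H$ as a multiset. This gives $\Delta(A',B') = |H| \biguplus_{i,j} ((a_i-b_j)+H)$.

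Finally I would apply the hypothesis: the multiset of cosets $\{(a_i-b_j)+H\}_{i,j}$ corresponds under the quotient map to $\Delta(A,B)$ in $G/H$, which by assumption equals $\lambda_{A,B}(G/H)$ with $\lambda_{A,B}=\frac{rs}{|G/H|}$. Hence each coset of $H$ appears exactly $\lambda_{A,B}$ times in $\biguplus_{i,j}((a_i-b_j)+H)$, and since the cosets partition $G$ this multiset equals $\lambda_{A,B}\,G$. Combining, $\Delta(A',B') = \frac{rs|H|}{|G/H|}\,G = \frac{|A'||B'|}{|G|}\,G$, which is the required GPSEDF condition. The identical calculation (or symmetry, since $G$ is abelian) handles $\Delta(B',A')$.

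There is no real obstacle beyond careful multiset bookkeeping; the only subtlety to flag is that the identity $\Delta(H,H) = |H| \cdot H$ (rather than just $H$) is precisely what supplies the extra factor of $|H|$ needed to convert the quotient-level $\lambda_{A,B}$ into the correct $\lambda_{A',B'} = |A'||B'|/|G|$ upstairs. Every step is a multiset equality, so one must track multiplicities rather than mere set-equality throughout.
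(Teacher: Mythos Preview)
Your proof is correct and follows essentially the same approach as the paper: both compute $\Delta(a_i+H,\,b_j+H)=|H|\bigl((a_i-b_j)+H\bigr)$ as multisets, sum over $i,j$, and then invoke the GPSEDF hypothesis in $G/H$ to conclude that the resulting union of cosets is $\lambda|H|$ copies of $G$. Your version is slightly more explicit about choosing coset representatives and deriving the factor $|H|$ via $\Delta(H,H)=|H|\cdot H$, but the argument is the same.
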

\begin{proof}
 Let $1 \leq i \leq r, 1 \leq j \leq s$. 
As a multiset, $\Delta(a_i+H, b_j+H)=|H|( (a_i-b_j)+H)$. So $\Delta(A',B')=\cup_{i=1}^r \cup_{j=1}^s \Delta(a_i+H, b_j+H)=|H|\cup_{i=1}^r \cup_{j=1}^s ((a_i-b_j)+H)$.  
 
If $\{A,B\}$ is a non-disjoint GPSEDF in $G/H$ then the multiset $\cup_{i=1}^r \cup_{j=1}^s (a_i-b_j)=\lambda (G/H)$ where $\lambda=\frac{rs}{|G/H|}$ and we have that $\Delta(A',B')$ comprises $\lambda|H|$ copies of each $x+H$ ($x \in G/H$), i.e. $\lambda|H|$ copies of $G$. 
\end{proof}

A key technique in this paper is the use of binary sequences.  We briefly outline the relationship between sets and sequences.  The binary sequence case is well-known and summarized in our context in \cite{HucNg}; we give it here for completeness, and also extend this to multisets.

\begin{definition}\label{sequenceset}
\begin{itemize}
\item[(i)] Let $A$ be a $k$-subset of $\mathbb{Z}_v$ ($v \in \mathbb{N}$).  We associate to $A$ a binary sequence $X=(x_t)_{t=0}^{v-1}$ of weight $k$, where $x_t=1$ if $t \in A$ and $x_t=0$ if $t \notin A$.
\item[(ii)] Let $X$ be a binary sequence $(x_t)_{t=0}^{v-1}$ of weight $k$.  We associate to $X$ a $k$-subset $A$ of $\mathbb{Z}_v$, where $t \in A \Leftrightarrow x_t = 1$.
\item[(iii)] Let $A$ be a multiset of elements of $\mathbb{Z}_v$ ($v \in \mathbb{N}$).  We associate to $A$ a sequence $X=(x_t)_{t=0}^{v-1}$ of non-negative integers, where $x_t$ equals the number of occurrences of $t$ in $A$.
\item[(iv)] Let $X=(x_t)_{t=0}^{v-1}$ be sequence of non-negative integers.  We associate to $X$ a multiset $A$ of elements of $\mathbb{Z}_v$, where $t$ occurs $i$ times in $A \Leftrightarrow x_t = i$.
\end{itemize}
\end{definition}

The following result was given in \cite{HucNg}.

\begin{theorem} \label{BinSeqSets}
Let $X_A=(x_t)_{t=0}^{v-1}$ and $X_B=(y_t)_{t=0}^{v-1}$ (with indices taken modulo $v$) be two binary sequences corresponding to subsets $A$ and $B$ in $\mathbb{Z}_v$. Then:
\begin{itemize}
\item[(i)] For $\delta \in \{0,\dots,v-1\}$, $\sum_{t=0}^{v-1} x_ty_{t+\delta}$ equals the number of occurrences of $\delta$ in $\Delta(B,A)$.
\item[(ii)] $\sum_{t=0}^{v-1} x_ty_{t+\delta} = \lambda$ for all $0 \leq \delta \leq v-1 \Leftrightarrow \Delta(B,A)=\lambda \mathbb{Z}_v$.
\end{itemize}
\end{theorem}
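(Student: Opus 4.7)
This is essentially an unpacking of the definitions, so the argument should be short. The plan is to prove (i) directly from the definition of the associated binary sequences, and then derive (ii) as an immediate corollary.

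For part (i), fix $\delta \in \{0, \ldots, v-1\}$. Since $X_A$ and $X_B$ are binary, the product $x_t y_{t+\delta}$ equals $1$ exactly when $x_t = 1$ and $y_{t+\delta} = 1$, i.e. (by Definition \ref{sequenceset}) exactly when $t \in A$ and $t+\delta \in B$ (with the index of $y$ taken modulo $v$). Hence
\[
\sum_{t=0}^{v-1} x_t y_{t+\delta} = \bigl|\{ t \in \mathbb{Z}_v : t \in A \text{ and } t+\delta \in B\}\bigr|.
\]
Setting $a = t$ and $b = t+\delta$, the right-hand side counts the ordered pairs $(a,b) \in A \times B$ with $b - a \equiv \delta \pmod v$, which is by definition the multiplicity of $\delta$ in the multiset $\Delta(B,A) = \{b - a : b \in B, a \in A\}$.

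For part (ii), the equivalence is immediate from (i): the multiset equation $\Delta(B,A) = \lambda \mathbb{Z}_v$ holds if and only if every element $\delta \in \mathbb{Z}_v$ occurs in $\Delta(B,A)$ with multiplicity exactly $\lambda$, and by (i) this multiplicity equals $\sum_{t=0}^{v-1} x_t y_{t+\delta}$.

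There is no substantial obstacle here; the only point to be careful about is the indexing convention (indices of $y$ are modulo $v$) and the direction convention $\Delta(B,A) = \{b-a\}$, which makes the shift by $+\delta$ in the $y$-sequence correspond to the difference $b - a = \delta$ rather than $a - b = \delta$.
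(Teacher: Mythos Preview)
Your proof is correct. The paper does not actually prove this result itself (it cites it from \cite{HucNg}), but the proof it gives for the multiset generalisation (Theorem~\ref{thm:multi_seq}) follows exactly your approach: interpret $x_t y_{t+\delta}$ as counting the pairs $(t, t+\delta) \in A \times B$ contributing the difference $\delta$ to $\Delta(B,A)$, sum over $t$ to get (i), and then (ii) is immediate.
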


In fact we can generalize this to the multiset case:
\begin{theorem}\label{thm:multi_seq}
Let $X_A = (x_i)_{i=0}^{v-1}$, $X_B=(y_i)_{i=0}^{v-1}$, $(x_i,y_i  \in \mathbb{N} \cup \{0\})$, with indices taken modulo v, be the sequences corresponding to multisets $A$ and $B$ of elements of $\mathbb{Z}_v$. Then:
\begin{itemize}
\item[(i)] For $\delta \in \{0,\dots,v-1\}$, $\sum_{t=0}^{v-1} x_ty_{t+\delta}$ equals the number of occurrences of $\delta$ in $\Delta(B,A)$.
\item[(ii)] $\sum_{t=0}^{v-1} x_ty_{t + \delta} = \lambda$ for all $0 \leq \delta \leq v-1$ if and only if $\Delta(B,A)=\lambda \mathbb{Z}_n$.
\end{itemize}
\end{theorem}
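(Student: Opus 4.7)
The plan is to mirror the proof of Theorem~\ref{BinSeqSets}, replacing indicator values in $\{0,1\}$ by multiplicity values in $\mathbb{N} \cup \{0\}$. The key observation is that $\Delta(B,A)$ is, by definition, the multiset that records the value $b - a$ for every ordered pair consisting of one copy of an element $a$ of $A$ and one copy of an element $b$ of $B$; so counting occurrences of a given difference reduces to counting such pairs with the correct multiplicities.

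For part (i), I would fix $\delta \in \{0,\ldots,v-1\}$ and count the number of occurrences of $\delta$ in $\Delta(B,A)$. Such an occurrence is determined by a choice of a copy of some $a \in A$ together with a copy of some $b \in B$ satisfying $b - a = \delta$ in $\mathbb{Z}_v$. Parameterising by $t = a$, the number of copies of $t$ in $A$ is $x_t$ and the number of copies of $t + \delta$ in $B$ is $y_{t+\delta}$, so the value $t$ contributes exactly $x_t \, y_{t+\delta}$ pairs $(a,b)$ with $b - a = \delta$. Summing over $t \in \{0,\ldots,v-1\}$ gives a total of $\sum_{t=0}^{v-1} x_t y_{t+\delta}$ occurrences of $\delta$ in $\Delta(B,A)$, which is the identity claimed.

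Part (ii) is then an immediate rewriting of (i). The multiset equation $\Delta(B,A) = \lambda \mathbb{Z}_v$ says precisely that every $\delta \in \mathbb{Z}_v$ appears exactly $\lambda$ times in $\Delta(B,A)$, and by (i) this is equivalent to $\sum_{t=0}^{v-1} x_t y_{t+\delta} = \lambda$ for all $\delta \in \{0,\ldots,v-1\}$.

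There is no substantive obstacle: Theorem~\ref{BinSeqSets} is recovered as the special case $x_t, y_t \in \{0,1\}$, and the counting argument goes through verbatim provided one is careful that pairs $(a,b)$ are counted \emph{with multiplicity} drawn from the multisets $A$ and $B$. The only notational point worth flagging is that the index $t + \delta$ in $y_{t+\delta}$ is read modulo $v$, consistent with the sequences being indexed by $\mathbb{Z}_v$ throughout the statement.
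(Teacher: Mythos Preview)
Your argument is correct and matches the paper's proof essentially verbatim: fix $\delta$, observe that $x_t y_{t+\delta}$ counts the pairs (with multiplicity) contributing $\delta$ to $\Delta(B,A)$ with $a=t$, sum over $t$, and deduce (ii) immediately from (i). There is nothing to add.
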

\begin{proof}
We prove (i); (ii) is then immediate. For fixed $\delta, t \in \mathbb{Z}_v$, $x_ty_{t+\delta}$ is the product of the number of occurrences of $t$ in $A$ and the number of occurrences of $t+\delta$ in $B$. This is the number of occurrences of $\delta$ as a difference between all the $(t+\delta)$s in $B$ and $t$s in $A$. Hence for a given $\delta$, the number of occurrences of $\delta$ in $\Delta(B,A)$ is the sum of this quantity over all $t$ in the sequence, as required.
\end{proof}

\end{section}

\begin{section}{New results on non-disjoint SEDFs and GSEDFs using sequence methods}\label{section:seq}

In \cite{HucNg}, two main constructions were presented: one for a $2$-set non-disjoint PSEDF with any value of $\lambda$ and another for a non-disjoint PSEDF with any number of sets.  Both were obtained using sequence methods. We summarize the statements of these below.

\begin{theorem}\label{HucNgpaper}
\begin{itemize}
\item[(i)] For any $a,r \in \mathbb{N}$, there exists a non-disjoint $(r a^2,2,ra,r)$-PSEDF in $\mathbb{Z}_{ra^2}$ and hence a non-disjoint $(r a^2,2,ra,r)$-SEDF in $\mathbb{Z}_{ra^2}$.
\item[(ii)] Let $N>1$.  There exists a non-disjoint $(2^N, N, 2^{N-1}, 2^{N-2})$-PSEDF in $\mathbb{Z}_{2^N}$ and hence a non-disjoint $(2^N,N,2^{N-1},(N-1)2^{N-2})$-SEDF in $\mathbb{Z}_{2^N}$.
\end{itemize}
\end{theorem}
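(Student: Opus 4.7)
The plan is to prove both parts by exhibiting explicit sets and computing their cross-correlations via the sequence framework of Theorem~\ref{BinSeqSets}. Once each PSEDF is established, the SEDF statement is immediate from the standard fact that a non-disjoint $(v,m,k,\lambda)$-PSEDF is a non-disjoint $(v,m,k,(m-1)\lambda)$-SEDF. Since $\mathbb{Z}_v$ is abelian, $\Delta(A,B)=\lambda G\Leftrightarrow\Delta(B,A)=\lambda G$ (obtained one from the other by negation), so in each case it suffices to verify the cross-correlation in one direction.

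For (i), I would take $A_1=\{0,1,\dots,ra-1\}$ and $A_2=\{0,a,2a,\dots,(ra-1)a\}$ in $\mathbb{Z}_{ra^2}$; the latter is precisely the subgroup of multiples of~$a$. The binary sequence of $A_1$ is a run of $ra$ consecutive ones followed by zeros, so the inner product $\sum_t x_t\,y_{t+\delta}$ with the sequence of $A_2$ simply counts how many multiples of $a$ lie in the cyclic window of length $ra$ starting at position~$-\delta$. Since multiples of $a$ are spaced exactly $a$ apart, any cyclic window of length $ra$ in $\mathbb{Z}_{ra^2}$ contains exactly $r$ of them. Applying Theorem~\ref{BinSeqSets}(ii) delivers the PSEDF.

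For (ii), I would take $A_j=\{k\in\mathbb{Z}_{2^N}:\lfloor k/2^{j-1}\rfloor\equiv 0\pmod 2\}$ for $1\leq j\leq N$; each has cardinality $2^{N-1}$ and its associated binary sequence $X^{(j)}$ alternates between blocks of $2^{j-1}$ ones and $2^{j-1}$ zeros. For $i<j$ and any shift $\delta$, the shifted sequence $X^{(j)}_{\cdot+\delta}$ still consists of $2^{N-j}$ ``on-blocks'' of length $2^{j-1}$, and I would compute $\sum_t x^{(i)}_t x^{(j)}_{t+\delta}$ block by block. The key input is that $X^{(i)}$ has period $2^i$ with weight $2^{i-1}$ per period; since $2^i\mid 2^{j-1}$, any window of length $2^{j-1}$ contains exactly $2^{j-2}$ ones of $X^{(i)}$. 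The total is thus $2^{N-j}\cdot 2^{j-2}=2^{N-2}$, independent of $\delta$, yielding the PSEDF via Theorem~\ref{BinSeqSets}(ii).

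The only substantive obstacle is shift-invariance of the cross-correlation in part~(ii): one must verify that no matter how the on-blocks of $X^{(j)}$ are positioned after a cyclic shift, the contribution from $X^{(i)}$ restricted to each such block is always exactly $2^{j-2}$. This is forced by the divisibility $2^i\mid 2^{j-1}$, which is where the powers-of-two structure is essential and which suggests why an analogous construction in other bases would be more delicate.
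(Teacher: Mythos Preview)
Your proposal is correct. Note that the paper does not prove Theorem~\ref{HucNgpaper} directly: it is quoted from \cite{HucNg}, and then recovered as the special case $c_0=c_1=a$, $c_2=r$, $d_1=d_2=1$ (for~(i)) and $c_0=\cdots=c_{N-1}=2$, $c_N=1$, $d_i=1$ (for~(ii)) of the general block construction in Theorem~\ref{GPSEDFblock}/Theorem~\ref{generalPSEDF}. Your argument is precisely the specialization of that proof: exploit periodicity of the shorter-block sequence so that the cross-correlation reduces to counting the weight of a window whose length is a multiple of the period.

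There is one small difference worth flagging. For part~(ii) your sets coincide (after reversing the index $j\leftrightarrow N-j+1$) with the sets produced by Theorem~\ref{GPSEDFblock}, and your ``$2^i\mid 2^{j-1}$'' divisibility is exactly the paper's ``$a_{j-1}\mid a_i\eta_i$'' step. For part~(i), however, you take $A_2$ to be the subgroup $a\mathbb{Z}_{ra^2}$, whereas the paper's specialization of Theorem~\ref{GPSEDFblock} gives $A_2$ as $a$ blocks of $r$ consecutive ones. Both are valid; your choice is arguably cleaner since the window-counting for $A_1$ becomes a residue-class count. Finally, your closing remark that ``an analogous construction in other bases would be more delicate'' is not quite right: Corollary~\ref{cor:powers}(i) shows the same argument works verbatim with $2$ replaced by any integer $S>1$, since the key divisibility $S^i\mid S^{j-1}$ for $i<j$ holds just as well.
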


The following key theorem generalizes these constructions, and yields many new non-disjoint GPSEDFs. We will later discuss which non-disjoint PSEDFs are obtainable from the theorem.

We use the following notation from \cite{Handbook} for a binary sequence $X=(x_t)_{t=0}^{v-1}$: a contiguous subsequence of length $r$ in $X$ is called a \emph{substring} of length $r$; a substring consisting entirely of $0$'s (not preceded or succeeded by the same symbol) is a \emph{gap}, and an analogous substring of $1$'s is a \emph{run}.
We call $X+s=(x_{i+s})_{i=0}^{v-1}$ a
(cyclic) shift of $X$ by $s$ places (indices taken modulo $v$).

\begin{theorem}\label{GPSEDFblock}
Let $m \geq 2$. Let $(a_0, a_1,\dots,a_m)$ be a sequence of positive integers such that for all $0 \leq i \leq m-1$, $a_i = b_{i+1}a_{i+1}$ for some $b_{i+1} (\geq 2) \in \mathbb{Z}$. Then, for all possible $1 \leq \eta_i \leq b_i-1$ there exists a non-disjoint $$(a_0,m,\eta_1a_1,\eta_2a_2\frac{a_0}{a_1},\eta_3a_3\frac{a_0}{a_2},\dots,\eta_ma_m\frac{a_0}{a_{m-1}})-\mbox{GPSEDF in } \mathbb{Z}_{a_0}.$$ 
The $\lambda$-matrix of this non-disjoint GPSEDF is $[a_{ij}]_{m\times m}$, where $a_{ij}= 0$ if $i=j$ and $a_{ij}=\lambda_{i,j}=\frac{a_0\eta_ia_i\eta_ja_j}{a_{i-1}a_{j-1}}$ otherwise.
\end{theorem}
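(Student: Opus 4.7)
The plan is to realise each $A_i$ as the support of an explicit binary sequence whose block structure mirrors the divisibility chain, and then invoke Theorem \ref{BinSeqSets}(ii) to check that each pairwise cross-correlation is the constant $\lambda_{i,j}$ demanded by the $\lambda$-matrix. Concretely, I would set
\[A_i \;=\; \{\, t \in \mathbb{Z}_{a_0} : t \bmod a_{i-1} < \eta_i a_i \,\},\]
so that the sequence $X_i = (x^{(i)}_t)_{t=0}^{a_0-1}$ has period $a_{i-1}$, consisting of a run of $\eta_i a_i$ ones followed by a gap of $(b_i - \eta_i)a_i$ zeros, repeated $a_0/a_{i-1}$ times (adopting the convention $a_0/a_0 = 1$). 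Since $\eta_i a_i + (b_i - \eta_i)a_i = a_{i-1}$, this is well-defined, the weight is $\eta_i a_i \cdot (a_0/a_{i-1})$ matching the target $k_i$, and $1 \leq \eta_i \leq b_i - 1$ is exactly what is needed for $A_i$ to be a nonempty proper subset.

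The heart of the argument is the pairwise computation. Fix $i < j$ and, for arbitrary $\delta \in \mathbb{Z}_{a_0}$, consider
\[C_{ij}(\delta) \;=\; \sum_{t=0}^{a_0-1} x^{(i)}_t\,x^{(j)}_{t+\delta},\]
which by Theorem \ref{BinSeqSets} counts the occurrences of $\delta$ in $\Delta(A_j, A_i)$. The divisibility chain gives $a_{j-1} \mid a_{i-1}$ and (since $j-1 \geq i$) $a_{j-1} \mid a_i$, and I would exploit both to factor the sum in two stages. First, writing $t = u\,a_{i-1} + s$ with $0 \leq s < a_{i-1}$, the summand depends only on $s$ through $x^{(i)}$ and only on $(s+\delta)\bmod a_{j-1}$ through $x^{(j)}$ (using $a_{j-1} \mid a_{i-1}$), so summation over $u$ contributes a factor $a_0/a_{i-1}$. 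Second, writing $s = v\,a_{j-1} + w$ with $0 \leq w < a_{j-1}$, the constraint $s < \eta_i a_i$ lets $v$ range freely over $\eta_i a_i / a_{j-1}$ values (an integer by $a_{j-1}\mid a_i$), while a shift argument shows that exactly $\eta_j a_j$ residues $w \in [0, a_{j-1})$ satisfy $(w + \delta)\bmod a_{j-1} < \eta_j a_j$. Multiplying the three factors gives $C_{ij}(\delta) = \frac{a_0\,\eta_i a_i\,\eta_j a_j}{a_{i-1} a_{j-1}} = \lambda_{i,j}$, independent of $\delta$. An analogous (or symmetric) computation handles $C_{ji}(\delta)$, and Theorem \ref{BinSeqSets}(ii) then yields $\Delta(A_j, A_i) = \lambda_{i,j}\,\mathbb{Z}_{a_0}$ for every $i \neq j$, confirming the non-disjoint GPSEDF and its $\lambda$-matrix.

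The only real obstacle is the divisibility bookkeeping: for each ordered pair one must track which of $a_{i-1}$ or $a_{j-1}$ acts as the outer (period) and inner (shift) modulus, and verify that the requisite ratios $a_{i-1}/a_{j-1}$, $a_i/a_{j-1}$ etc.\ are all integers. All of these collapse into the single observation that later indices in the chain $a_m \mid a_{m-1} \mid \cdots \mid a_0$ divide earlier ones; once this is marshalled, the rest is routine double counting and no genuinely new idea is needed.
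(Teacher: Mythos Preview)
Your proposal is correct and follows essentially the same approach as the paper: you define the identical periodic block sequences $X_i$ (with period $a_{i-1}$, run length $\eta_i a_i$), invoke the same divisibility facts $a_{j-1}\mid a_{i-1}$ and $a_{j-1}\mid a_i$ for $i<j$, and carry out the same two-stage reduction of the cross-correlation sum (first by period $a_{i-1}$, then by period $a_{j-1}$) to obtain the constant $\frac{a_0\eta_i a_i\eta_j a_j}{a_{i-1}a_{j-1}}$. The only difference is cosmetic: the paper presents the computation as a chain of equalities, whereas you phrase it as an explicit change of variables $t=u\,a_{i-1}+s$, $s=v\,a_{j-1}+w$.
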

\begin{proof}
Let $(a_0, a_1,\dots,a_m)$ ($m \geq 2$) be a sequence of positive integers such that for all $0 \leq i \leq m-1$, $a_i = b_{i+1}a_{i+1}$ for some $b_{i+1} (\geq 2) \in \mathbb{Z}$. Then fix $1 \leq \eta_i \leq b_i-1$ for all $1 \leq i \leq m$.

Firstly, we note that $a_j|a_i$ for all $i \leq j$ since $a_j|a_{j-1}|a_{j-2}\dots|a_i$.

Define $X_i=(x_t)_{t=0}^{a_0-1}$ for all $i = 1,\dots,m$, where:
\[x_t =\begin{cases}
    1,& 0\leq t \leq \eta_ia_i-1\\
    0,& \eta_ia_i\leq t \leq a_ib_i-1=a_{i-1}-1\\
    x_{t-a_ib_i}, & a_ib_i \leq t \leq a_0 -1\quad (a_ib_i=a_{i-1})
    \end{cases}\]
So we obtain:
\[X_i = \overbrace{\overbrace{\underbrace{\underbrace{1\dots1}_{a_i}\underbrace{1\dots1}_{a_i}\dots\underbrace{1\dots1}_{a_i}}_{\eta_i \text{copies}}\underbrace{\underbrace{0\dots0}_{a_i}\underbrace{0\dots0}_{a_i}\dots\underbrace{0\dots0}_{a_i}}_{(b_i-\eta_i) \text{copies}}}^{\text{length }a_ib_i=a_{i-1}}\dots\overbrace{\underbrace{\underbrace{1\dots1}_{a_i}\underbrace{1\dots1}_{a_i}\dots\underbrace{1\dots1}_{a_i}}_{\eta_i \text{copies}}\underbrace{\underbrace{0\dots0}_{a_i}\underbrace{0\dots0}_{a_i}\dots\underbrace{0\dots0}_{a_i}}_{(b_i-\eta_i) \text{copies}}}^{\text{length }a_ib_i=a_{i-1}}}^{\frac{a_0}{a_{i-1}}\text{ repeats}}\]

For all $1 \leq i \leq m$, we let $A_i$ be the set in $\mathbb{Z}_{a_0}$ corresponding to $X_i$ and we let $A = \{A_1,A_2,\dots,A_m\}$.

Since $X_i$ repeats itself every $a_{i-1}$ entry and the first substring of $X_i$ with length $a_{i-1}$ has weight $\eta_ia_i$ so does every other length $a_{i-1}$ substring of $X_i$. So any substring of length $ca_{i-1}$, ($c \in \mathbb{N}$) has weight $c\eta_ia_i$. In particular, $X_i$ has weight $\eta_ia_i\frac{a_0}{a_{i-1}}$ and so $|A_i|=\eta_ia_i\frac{a_0}{a_{i-1}}$. 

Let $i,j \in \{1,\dots,m\}$, $i < j$ and $\delta \in \{0,\dots,a_0-1\}$. We let $X_i = (x_t)_{t=0}^{a_0-1}$ and $X_j = (y_t)_{t=0}^{a_0-1}$. We consider the number of occurrences of $\delta$ in $\Delta(A_i,A_j)$ ($=\Delta(A_j,A_i)$ since $\mathbb{Z}_{a_0}$ is abelian), which by Theorem \ref{BinSeqSets} is equal to:
\begin{align*}
\sum_{t=0}^{a_0-1} x_ty_{t+\delta} &= \frac{a_0}{a_{i-1}}\sum_{t=0}^{a_{i-1}-1} x_ty_{t+\delta} &\text{($X_i$ and $X_j$ repeat every $a_{i-1}$ entry)}\\
&=\frac{a_0}{a_{i-1}}\sum_{t=0}^{a_{i}\eta_i-1} y_{t+\delta} &\text{($x_t = 1$ for $0 \leq t \leq \eta_ia_i-1$, $x_t = 0$ for $\eta_ia_i \leq t \leq a_{i-1}$)}\\
&=\frac{a_0}{a_{i-1}}\frac{a_i\eta_i}{a_{j-1}}\sum_{t=0}^{a_{j-1}-1} y_{t+\delta} &\text{($X_j$ repeats every $a_{j-1}$ entry, which divides $a_i\eta_i$)}\\
&=\frac{a_0a_i\eta_ia_j\eta_j}{a_{i-1}a_{j-1}} &\text{(weight of $a_{j-1}$ substring of $X_j$)}
\end{align*}

This is true for all $\delta \in \{0,\dots,a_0-1\}$ so $\Delta(A_i,A_j) = \Delta(A_j,A_i) =\frac{a_0a_i\eta_ia_j\eta_j}{a_{i-1}a_{j-1}}\mathbb{Z}_{a_0}$.  Hence $\Delta(A_i,A_j) = \frac{a_0a_i\eta_ia_j\eta_j}{a_{i-1}a_{j-1}}\mathbb{Z}_{a_0}$ for all $1 \leq i \neq j \leq m$, and thus $A = \{A_1,\dots,A_m\}$ forms a non-disjoint ($a_0,m,\eta_1a_1,\eta_2a_2\frac{a_0}{a_1},\eta_3a_3\frac{a_0}{a_2},\dots,\eta_ma_m\frac{a_0}{a_{m-1}}$)-GPSEDF with $\lambda$-matrix $[a_{ij}]_{m \times m}$, where $a_{ii} = 0$ if $i=j$ and $a_{ij}=\lambda_{i,j}=\frac{a_0\eta_ia_i\eta_ja_j}{a_{i-1}a_{j-1}}$ otherwise.
\end{proof}

We now present an equivalent form of this construction, which better shows the achievable parameters. 

\begin{corollary} \label{GPSEDFblockparameterfriendly}
Let $m \geq 2$.
Given any $(c_0,c_1,\ldots,c_m)$ with $c_0\ldots,c_{m-1} (\geq 2) \in \mathbb{Z}$ and $c_m \in \mathbb{N}$, and any  $1 \leq d_i \leq c_{i-1}-1$ ($1 \leq i \leq m$), there exists a non-disjoint
$$(v,m,d_1\frac{v}{c_0},d_2\frac{v}{c_1},\dots,d_m\frac{v}{c_{m-1}})-\mbox{GPSEDF}$$ in $\mathbb{Z}_v$, where $v=c_0 c_1\dots c_m$.
\end{corollary}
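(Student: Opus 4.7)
The plan is to obtain Corollary 3.3 as a direct reparametrization of Theorem 3.2. The idea is that the sequence $(a_0,a_1,\ldots,a_m)$ in Theorem 3.2 is determined by the successive ratios $b_i=a_{i-1}/a_i$ together with the terminal value $a_m$, and these are precisely the data $(c_0,c_1,\ldots,c_m)$ of the corollary (with $c_{i-1}=b_i$ and $c_m=a_m$). So rather than proving anything new, I would exhibit the translation and then read off the parameters.

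More concretely, given $(c_0,\ldots,c_m)$ with $c_0,\ldots,c_{m-1}\geq 2$ and $c_m\in\mathbb{N}$, I would define, for $0\leq i\leq m$,
\[
a_i \;=\; c_i c_{i+1}\cdots c_m,
\]
with the convention $a_m=c_m$, so that $a_0=c_0c_1\cdots c_m=v$. Setting $b_i=c_{i-1}$ for $1\leq i\leq m$, one checks immediately that $a_{i-1}=b_i a_i$ and that $b_i\geq 2$, so the sequence $(a_0,a_1,\ldots,a_m)$ satisfies the hypotheses of Theorem~\ref{GPSEDFblock}. The bound $1\leq d_i\leq c_{i-1}-1$ in the corollary is identical to the bound $1\leq \eta_i\leq b_i-1$ in the theorem under the correspondence $\eta_i=d_i$.

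It then remains to verify that the set sizes match. Using $a_i=v/(c_0c_1\cdots c_{i-1})$, one computes $a_0/a_{i-1}=c_0c_1\cdots c_{i-2}$ for $i\geq 2$, and hence
\[
\eta_i a_i \cdot \frac{a_0}{a_{i-1}} \;=\; d_i\cdot\frac{v}{c_0c_1\cdots c_{i-1}}\cdot c_0c_1\cdots c_{i-2} \;=\; d_i\cdot\frac{v}{c_{i-1}},
\]
with the case $i=1$ giving $\eta_1 a_1 = d_1\cdot v/c_0$ directly. Thus the $(a_0,m,\eta_1a_1,\eta_2a_2\tfrac{a_0}{a_1},\ldots,\eta_m a_m\tfrac{a_0}{a_{m-1}})$-GPSEDF produced by Theorem~\ref{GPSEDFblock} is precisely a $(v,m,d_1\tfrac{v}{c_0},d_2\tfrac{v}{c_1},\ldots,d_m\tfrac{v}{c_{m-1}})$-GPSEDF, as required.

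There is no real obstacle here: the only thing to be careful about is bookkeeping of indices (in particular matching the shift $c_{i-1}=b_i$ and checking the $i=1$ boundary case), and confirming that the constraint $b_i\geq 2$ corresponds exactly to $c_0,\ldots,c_{m-1}\geq 2$ (whereas no lower bound on $c_m$ beyond $c_m\in\mathbb{N}$ is needed, since $b_{m+1}$ plays no role in the hypothesis). Once the dictionary is set up, Theorem~\ref{GPSEDFblock} delivers the conclusion immediately.
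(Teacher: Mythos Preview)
Your proposal is correct and is essentially identical to the paper's own proof: both set $a_i = c_i c_{i+1}\cdots c_m$, identify $b_i = c_{i-1}$ and $\eta_i = d_i$, and read off the parameters from Theorem~\ref{GPSEDFblock}. Your version is in fact slightly more explicit in verifying the set-size translation and the boundary case $i=1$, but the approach is the same.
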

\begin{proof}
In Theorem \ref{GPSEDFblock}, we have $a_i=c_ic_{i+1} \dots c_m$ for all $0 \leq i \leq m$, where $b_{t+1}=c_t$ for $0 \leq t \leq m-1$ and $a_m=c_m$.  Note that $c_0 \ldots c_{i-1}=\frac{a_0}{a_i}$ and $c_{i+1} \ldots c_m= a_{i+1}$.
\end{proof}

We now consider the circumstances under which Theorem \ref{GPSEDFblock} yields a non-disjoint PSEDF, using Corollary \ref{GPSEDFblockparameterfriendly}. This occurs when all the set sizes are the same, i.e. when $d_i \frac{v}{c_{i-1}}=d_j \frac{v}{c_{j-1}}$ for all $1 \leq i,j \leq m$.

\begin{theorem}\label{generalPSEDF}
Let $m \geq 2$ and let $(d_1,\dots,d_m),(c_0,\dots,c_m)$ be sequences of not necessarily distinct natural numbers such that $\frac{d_i}{c_{i-1}} = z$ for all $1 \leq i \leq m$, for some $z \in \mathbb{Q}$ with $0 < z < 1$.  (In other words, take $m$ expressions for $z$ as a fraction). Then there exists a non-disjoint ($c_0\dots c_m, m,c_0\dots c_mz,c_0\dots c_mz^2$)-PSEDF in $\mathbb{Z}_{c_0 \dots c_m}$. 
\end{theorem}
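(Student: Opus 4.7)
The plan is to apply Corollary \ref{GPSEDFblockparameterfriendly} directly to the given sequences $(c_0,\dots,c_m)$ and $(d_1,\dots,d_m)$, and then use the hypothesis that the ratio $d_i/c_{i-1}=z$ is constant in $i$ to observe that the resulting non-disjoint GPSEDF has all set-sizes equal and all pairwise parameters $\lambda_{i,j}$ equal -- which is precisely the PSEDF condition.

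First I would verify that the hypotheses of Corollary \ref{GPSEDFblockparameterfriendly} are satisfied. Writing $d_i=zc_{i-1}$ with $d_i\in\mathbb{N}$ and $0<z<1$ forces $c_{i-1}>d_i\geq 1$, so $c_{i-1}\geq 2$ and $1\leq d_i\leq c_{i-1}-1$ for every $1\leq i\leq m$, as required; the final term $c_m$ only needs to be a natural number. Setting $v=c_0c_1\cdots c_m$, Corollary \ref{GPSEDFblockparameterfriendly} then yields a non-disjoint GPSEDF in $\mathbb{Z}_v$ in which the $i$-th set has cardinality $d_i\cdot v/c_{i-1}=(d_i/c_{i-1})v=vz$, independent of $i$.

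Next I would compute $\lambda_{i,j}$ using the $\lambda$-matrix formula $\lambda_{i,j}=a_0\eta_ia_i\eta_ja_j/(a_{i-1}a_{j-1})$ from Theorem \ref{GPSEDFblock}. Under the translation $a_i=c_ic_{i+1}\cdots c_m$, $a_0=v$, $\eta_i=d_i$ used in the proof of Corollary \ref{GPSEDFblockparameterfriendly}, one has $a_i/a_{i-1}=1/c_{i-1}$, so $\lambda_{i,j}=v\cdot(d_i/c_{i-1})\cdot(d_j/c_{j-1})=vz^2$ for every $i\neq j$, independent of the pair $(i,j)$. Hence all $m$ sets have the common cardinality $vz=c_0\cdots c_m\,z$ and $\Delta(A_i,A_j)=vz^2\,\mathbb{Z}_v$ for all $i\neq j$, producing a non-disjoint $(v,m,vz,vz^2)$-PSEDF as claimed.

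The argument is essentially index-bookkeeping on top of Corollary \ref{GPSEDFblockparameterfriendly}; there is no serious obstacle, since the integrality and size constraints on $c_{i-1}$ and $d_i$ follow automatically from $0<z<1$ together with $d_i\in\mathbb{N}$, and the constancy of both the set-size and $\lambda_{i,j}$ is an immediate consequence of the common-ratio hypothesis.
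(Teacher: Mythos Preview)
Your proposal is correct and follows essentially the same approach as the paper: apply Corollary~\ref{GPSEDFblockparameterfriendly} to the given $(c_0,\dots,c_m)$ and $(d_1,\dots,d_m)$, then observe that the constant-ratio condition $d_i/c_{i-1}=z$ makes all set-sizes equal to $vz$ and all $\lambda_{i,j}$ equal to $vz^2$. Your verification of the hypotheses $c_{i-1}\geq 2$ and $1\leq d_i\leq c_{i-1}-1$ and your explicit computation of $\lambda_{i,j}$ via the formula from Theorem~\ref{GPSEDFblock} are slightly more detailed than the paper's treatment, but the argument is the same.
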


Note the condition $0 < z < 1$ is necessary since Corollary \ref{GPSEDFblockparameterfriendly} requires that $d_i < c_{i-1}$ for all $1 \leq i \leq m$.

Observe that Theorem \ref{HucNgpaper}(i) is the special case of Theorem \ref{generalPSEDF} with $c_0=c_1=a$, $c_2=r$ and $d_1=d_2=1$; while Theorem \ref{HucNgpaper}(ii) is the special case of Theorem \ref{generalPSEDF} with $c_0=c_1= \cdots=c_{N-1}=2$, $c_N=1$ and $d_1=d_2= \cdots d_N =1$.

We next present particular infinite families of non-disjoint PSEDFs arising from this construction.

\begin{corollary} \label{generalPSEDFsparameterfriendly}
    \begin{itemize}
        \item[(i)] Let $z \in \mathbb{Q}$, $0<z<1$, and let $z=\frac{a}{b}$  for some $a,b \in \mathbb{N}$. Let $v = b^m f_0\dots f_m$, where $f_0,\dots,f_m \in \mathbb{N}$ and $m \geq 2$. Then there exists a non-disjoint ($v, m, vz, vz^2$)-PSEDF in $\mathbb{Z}_{v}$.
        \item[(ii)] Let $v = b^mf$, where $f \in \mathbb{N}$ and $b,m \in \mathbb{Z}_{\geq 2}$. Let $1 \leq a < b$, $a, b \in \mathbb{N}$. Then there exists a non-disjoint ($b^mf, m, ab^{m-1}f, a^2b^{m-2}f$)-PSEDF in $\mathbb{Z}_{b^mf}$.
    \end{itemize}
\end{corollary}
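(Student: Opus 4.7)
The plan is to obtain both parts as direct consequences of Theorem \ref{generalPSEDF} by exhibiting appropriate fraction representations of $z = a/b$.

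For part (i), I would set $c_{i-1} = b f_{i-1}$ and $d_i = a f_{i-1}$ for $i = 1, \ldots, m$, and take $c_m = f_m$. Each ratio $d_i/c_{i-1}$ then reduces to $a/b = z$, so the sequences $(c_0, c_1, \ldots, c_m)$ and $(d_1, \ldots, d_m)$ furnish the $m$ expressions of $z$ as a fraction required by Theorem \ref{generalPSEDF}. I would then verify the three auxiliary conditions: (a) the product $c_0 c_1 \cdots c_m$ equals $b^m f_0 f_1 \cdots f_m = v$; (b) $d_i < c_{i-1}$ holds since $z < 1$ forces $a < b$; and (c) each $c_{i-1} \geq 2$ holds because $b \geq 2$ (a consequence of $a \geq 1$ together with $a < b$) and $f_{i-1} \geq 1$. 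Applying Theorem \ref{generalPSEDF} then delivers the claimed $(v, m, vz, vz^2)$-PSEDF in $\mathbb{Z}_v$.

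For part (ii), I would specialize part (i) by choosing $f_0 = f$ and $f_1 = f_2 = \cdots = f_m = 1$. Then $v = b^m f$, the set size $vz$ becomes $a b^{m-1} f$, and the $\lambda$-value $vz^2$ becomes $a^2 b^{m-2} f$. The hypothesis $m \geq 2$ ensures that $b^{m-2}$ is a non-negative integer power of $b$, so the parameters are well-defined.

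Since the corollary is essentially a parameter-matching exercise on top of Theorem \ref{generalPSEDF}, there is no substantive obstacle. The only care required is to select the factorization so that the product $c_0 \cdots c_m$ equals the prescribed $v$ while simultaneously satisfying the divisibility constraints $c_{i-1} \geq 2$ and the bound $d_i \leq c_{i-1} - 1$; the choice above was engineered so that all of these hold automatically from $a \geq 1$, $b \geq 2$, and $f_i \geq 1$.
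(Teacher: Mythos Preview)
Your proposal is correct and follows essentially the same approach as the paper: both proofs choose $c_{i-1}=bf_{i-1}$, $d_i=af_{i-1}$ (for $1\le i\le m$) and $c_m=f_m$, then invoke Theorem~\ref{generalPSEDF}, and both obtain (ii) from (i) via $f_0=f$, $f_1=\cdots=f_m=1$. You are, if anything, slightly more careful in explicitly verifying the side conditions $c_{i-1}\ge 2$ and $d_i\le c_{i-1}-1$.
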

\begin{proof}
\begin{itemize}
    \item[(i)] Let $v = b^m f_0\dots f_m$ and $z = \frac{a}{b}$ as above. We define the sequences $(d_1,\dots,d_m)=(a f_0,a f_1,\dots, a f_{m-1})$ and $(c_0,\dots,c_m) = (b f_0, b f_1, \dots, b f_{m-1}, f_m)$. We see that $\frac{d_i}{c_{i-1}} = \frac{a f_{i-1}}{b f_{i-1}}=\frac{a}{b}=z$ for all $1 \leq i \leq m$. So Theorem \ref{generalPSEDF} implies the existence of a non-disjoint ($c_0\dots c_m, m,c_0\dots c_mz,c_0\dots c_mz^2$)-PSEDF. We note that $c_0\dots c_m=b^m f_0\dots f_m=v$, so a non-disjoint ($v, m,vz,vz^2$)-PSEDF exists in $\mathbb{Z}_v$ as required.
    \item[(ii)] $f=f \cdot 1^m$ so we can take $f_0=f$, $f_1,\dots,f_m=1$ and $z = \frac{a}{b}$ and apply part (i).
\end{itemize}
\end{proof}

\begin{corollary}\label{cor:powers}
\begin{itemize}
\item[(i)] Let $S>1$, $N>2$. There exists a non-disjoint ($S^N$, $N$, $S^{N-1}$, $S^{N-2}$)-PSEDF in $\mathbb{Z}_{S^N}$ and hence a non-disjoint ($S^N,N,S^{N-1},(N-1)S^{N-2}$)-SEDF.
\item[(ii)] Let $x,y \in \mathbb{N} $, $x < y$, then there exists a non-disjoint ($y^2, 2, xy, x^2$)-PSEDF in $\mathbb{Z}_{y^2}$ and hence a a non-disjoint ($y^2, 2, xy, x^2$)-SEDF.
\end{itemize}
\end{corollary}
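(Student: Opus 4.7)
The plan is to obtain both parts as direct specializations of Corollary~\ref{generalPSEDFsparameterfriendly}(ii), combined with the standard observation (from the proposition following Definition~\ref{def:nondisjointSEDF}) that a non-disjoint $(v,m,k,\lambda)$-PSEDF is automatically a non-disjoint $(v,m,k,(m-1)\lambda)$-SEDF.

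For part (i), I would set $b=S$, $m=N$, $a=1$, $f=1$ in Corollary~\ref{generalPSEDFsparameterfriendly}(ii). The hypotheses are met: $S>1$ gives $b\geq 2$, $N>2$ gives $m\geq 2$, and $1\leq a<b$ becomes $1<S$, which holds. Substituting into the parameter tuple $(b^m f,\,m,\,ab^{m-1}f,\,a^2 b^{m-2}f)$ yields a non-disjoint $(S^N,N,S^{N-1},S^{N-2})$-PSEDF in $\mathbb{Z}_{S^N}$. The PSEDF-to-SEDF conversion then produces a non-disjoint $(S^N,N,S^{N-1},(N-1)S^{N-2})$-SEDF, as claimed.

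For part (ii) I would set $b=y$, $m=2$, $a=x$, $f=1$ in the same corollary. Since $x\geq 1$ and $x<y$, we get $y\geq 2$, so $b\geq 2$; the requirement $1\leq a<b$ is exactly the given $1\leq x<y$; and $m=2$ trivially satisfies $m\geq 2$. The parameter tuple specializes to $(y^2,2,xy,x^2)$, giving the required non-disjoint $(y^2,2,xy,x^2)$-PSEDF in $\mathbb{Z}_{y^2}$. Because $m-1=1$ here, the PSEDF-to-SEDF conversion preserves the value $\lambda=x^2$ and produces the SEDF with identical parameters.

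There is no substantive obstacle: the entire argument is a parameter substitution into a result already proved. The only point worth being explicit about is checking, in each case, that the hypothesis $b\geq 2$ of Corollary~\ref{generalPSEDFsparameterfriendly}(ii) actually follows from the (slightly weaker-looking) hypotheses $S>1$ and $x<y$ in the statement of Corollary~\ref{cor:powers}, which is immediate as indicated above.
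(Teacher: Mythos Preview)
Your proposal is correct and matches the paper's own proof essentially verbatim: the paper also proves both parts by substituting $b=S$, $m=N$, $a=1$, $f=1$ (for (i)) and $b=y$, $m=2$, $a=x$, $f=1$ (for (ii)) into Corollary~\ref{generalPSEDFsparameterfriendly}(ii). Your added remarks verifying the hypotheses and invoking the PSEDF-to-SEDF conversion are fine elaborations of what the paper leaves implicit.
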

\begin{proof}
\begin{itemize}
     \item[(i)] Take $b=S$, $m=N$, $a=1$ and $f=1$ in Corollary \ref{generalPSEDFsparameterfriendly}(ii).
    \item[(ii)] Take $b=y$, $m=2$, $a=x$ and $f=1$ in Corollary \ref{generalPSEDFsparameterfriendly}(ii) (note $a<b$).
\end{itemize}
\end{proof}

Next, we present a new 2-set construction for non-disjoint PSEDFs and GPSEDFs, which produces some examples which are not obtainable from our previous constructions. The new construction uses a `modular arithmetic' viewpoint, and involves less repetition of blocks in the sequences.

\begin{theorem}\label{modularGPSEDF2}
Let $v=ab$ for some $a,b \in\mathbb{N}$. For any $1 \leq k_1 \leq b-1$ and $1 \leq k_2 \leq a-1$, there exists a non-disjoint $(v,2,k_1a, k_2b)$-GPSEDF in $\mathbb{Z}_v$.
\end{theorem}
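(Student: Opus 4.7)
The plan is to exhibit two explicit sets in $\mathbb{Z}_{ab}$ realizing the required parameters and then to verify the pairwise external difference condition via the binary sequence framework of Theorem \ref{BinSeqSets}. Specifically, I would take
$$A_1 = \{t \in \mathbb{Z}_{ab} : t \bmod b < k_1\} \quad \text{and} \quad A_2 = \{0, 1, \dots, k_2 b - 1\}.$$
Since $t \bmod b$ takes each residue in $\{0,1,\dots,b-1\}$ exactly $a$ times as $t$ ranges over $\mathbb{Z}_{ab}$, we have $|A_1| = k_1 a$; clearly $|A_2| = k_2 b$, so the size requirements are met. The associated binary sequence $X_1$ is $b$-periodic, each period consisting of $k_1$ ones followed by $b-k_1$ zeros, while $X_2$ is the simple sequence of $k_2 b$ ones followed by $ab - k_2 b$ zeros. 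This gives far less repetition than the constructions of Theorem \ref{GPSEDFblock}, which is the point of the ``modular'' presentation.

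To verify the GPSEDF property, I will apply Theorem \ref{BinSeqSets}(ii) and evaluate the correlation $\sum_{t=0}^{ab-1} x_t y_{t+\delta}$ for arbitrary $\delta \in \mathbb{Z}_{ab}$. This sum counts the number of elements of $A_1$ lying in the cyclic interval $I_\delta = \{-\delta, -\delta+1, \dots, -\delta + k_2 b - 1\} \pmod{ab}$, which has length $k_2 b$. Because $X_1$ is $b$-periodic, \emph{every} cyclic interval of length $b$ in $\mathbb{Z}_{ab}$ contains exactly $k_1$ elements of $A_1$; partitioning $I_\delta$ into $k_2$ consecutive length-$b$ blocks then yields the $\delta$-independent count $k_1 k_2$. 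Hence $\Delta(A_2, A_1) = k_1 k_2\, \mathbb{Z}_{ab}$, and since $\mathbb{Z}_{ab}$ is abelian, $\Delta(A_1, A_2) = -\Delta(A_2, A_1) = k_1 k_2\, \mathbb{Z}_{ab}$ as well. As $k_1 k_2 = \frac{(k_1 a)(k_2 b)}{ab}$, this is precisely the required value $\lambda_{1,2}$, so $\{A_1, A_2\}$ is a non-disjoint $(v, 2, k_1 a, k_2 b)$-GPSEDF.

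There is no serious obstacle here: the proof is essentially a one-line application of the correlation formula, with the only content being the observation that $b$-periodicity of $X_1$ forces the correlation against any cyclic interval of length a multiple of $b$ to be constant. The subtlety worth noting is why this construction is genuinely new: Theorem \ref{GPSEDFblock} applied with $m=2$ requires a three-factor decomposition $v = c_0 c_1 c_2$ with $c_0, c_1 \geq 2$, and the sequence $X_2$ it produces has $b$-many repetitions of a length-$a$ block, whereas the present $X_2$ is a single run, permitting parameter combinations (for instance when the natural decomposition of $v$ does not split in the required way) that the earlier theorem cannot reach.
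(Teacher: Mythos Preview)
Your proof is correct and follows the same binary-sequence/correlation approach as the paper. The only substantive difference is one of generality: the paper's proof builds a whole family of examples, letting $A$ consist of all elements in an \emph{arbitrary} choice of $k_1$ residue classes modulo $b$, and $B$ consist of an \emph{arbitrary} choice of $k_2$ elements from each residue class modulo $b$ (encoded via the data $S$ and $R$). Your $A_1$ is the special case $S=\{0,1,\dots,k_1-1\}$ and your $A_2$ is the special case $r_{i,j}=j$ for all $i$. This extra freedom is what the paper exploits immediately afterwards to count at least $\binom{a}{k_2}^{b}\binom{b}{k_1}$ distinct GPSEDFs with these parameters; your proof establishes existence but would not yield that corollary.

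One remark on your final paragraph: your specific pair $(A_1,A_2)$ is in fact already an instance of Theorem~\ref{GPSEDFblock}. Taking the divisor chain $a_0=ab$, $a_1=b$, $a_2=1$ (so $b_1=a$, $b_2=b$) with $\eta_1=k_2$, $\eta_2=k_1$ produces exactly your $A_2$ as the first set and your $A_1$ as the second. So the novelty of Theorem~\ref{modularGPSEDF2} lies not in the parameters or in your particular sets, but in the flexibility of the choices $R,S$, which yields many examples (such as Example~\ref{ex:fullcycorder}) genuinely not obtainable from Theorem~\ref{GPSEDFblock}.
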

\begin{proof}
Let $v=ab$ for some $a,b \in \mathbb{N}$ and fix $1 \leq k_1 \leq b-1$, $1 \leq k_2 \leq a-1$. Let $R=\{ r_{i,j} : r_{i,j} \in \mathbb{Z}, 0 \leq r_{i,j} \leq a-1 \, (0 \leq i \leq b-1, 0 \leq j \leq k_2-1)\}$ be a set with the property that $j_1 \neq j_2 \Rightarrow r_{i,j_1} \neq r_{i,j_2}$. Let
$S=\{s_i: 0 \leq s_i \leq b-1 \, (0 \leq i \leq k_1-1)\}$ be a set of $k_1$ distinct integers.\\

We next define two binary sequences of length $v$.  Let $X = (x_t)_{t=0}^{v-1}$ and $Y=(y_t)_{t=0}^{v-1}$, where:
\[x_t = \begin{cases}
	1,& jb+s_i \quad (0 \leq j \leq a-1, 0 \leq i \leq k_1-1)\\
	0,& \text{otherwise}
	\end{cases}\]
	
\[y_t = \begin{cases}
	1,& r_{i,j}b+i \quad (0 \leq i \leq b-1, 0\leq j\leq k_2-1)\\
	0,& \text{otherwise}
	\end{cases}\]

To see that the values $r_{i,j}b+i$ (which lie in the range from $0$ to $ab-1$) are distinct: there is no repetition for fixed $i$ by choice of $r_{i,j}$, and no repetition between distinct $i$ as each fixed $i$ corresponds to a different equivalence class mod $b$.

Denote by $A$ the $a k_1$-subset of $\mathbb{Z}_v$ corresponding to $X$, and by $B$ the $bk_2$-subset of $\mathbb{Z}_v$ corresponding to $Y$.  Here $A$ comprises all the elements in $\mathbb{Z}_v$, which are in the congruence classes of $s_0,\dots,s_{k_1-1}$ modulo $b$, and $B$ comprises $k_2$ elements in each congruence class modulo $b$.  
	
Let $\delta \in \mathbb{Z}_v$.  By Theorem \ref{BinSeqSets}, the number of occurrences of $\delta$ in $\Delta(A,B)$ ($=\Delta(B,A)$) equals:\\
\begin{align*}
\sum_{t=0}^{v-1} x_ty_{t+\delta}&=\sum_{i=0}^{k_1-1} \sum_{j=0}^{a-1} y_{jb+s_i+\delta}\\&\text{(by the definition of $X$, since it is 0 for all other values)}\\
&=\sum_{i=0}^{k_1-1} k_2\\
&\text{(since there are $k_2$ elements of $B$ congruent to $s_i+\delta$ modulo $b$)}\\
&=k_1k_2
\end{align*}
This holds for all $\delta \in \mathbb{Z}_v$ and so $\Delta(A,B)=\Delta(B,A)=k_1k_2 \mathbb{Z}_v$. So $\{A,B\}$ forms a non-disjoint $(v,2,k_1a,k_2b)$-GPSEDF as required.
\end{proof} 

\begin{corollary}
Let $v=ab$ for some $a,b \in\mathbb{N}$. 
Let $k_1 \in \{1, \ldots, b-1\}$ and $k_2 \in \{1, \ldots, a-1\}$.
Then there exist at least ${a \choose k_2}^b {b \choose k_1}$ different families of sets which are non-disjoint $(v,2,k_1a, k_2b)$-GPSEDFs in $\mathbb{Z}_v$.
\end{corollary}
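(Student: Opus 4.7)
The plan is to count the distinct families $\{A,B\}$ produced by Theorem \ref{modularGPSEDF2} as the parameters $S$ and $R$ range over all admissible choices, and argue that these different parameter choices yield different families.

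First, I would observe that the construction has two independent ingredients: the set $S \subseteq \{0,\ldots,b-1\}$ with $|S|=k_1$ which determines $A$, and the data $R$ which, for each residue $i \in \{0,\ldots,b-1\}$, specifies a $k_2$-subset $\{r_{i,0},\ldots,r_{i,k_2-1}\}$ of $\{0,\ldots,a-1\}$, and which determines $B$. Counting these directly: there are $\binom{b}{k_1}$ choices of $S$ and, independently for each of the $b$ residue classes, $\binom{a}{k_2}$ choices of the corresponding $k_2$-subset, giving $\binom{a}{k_2}^b$ admissible $R$. Each choice yields, by Theorem \ref{modularGPSEDF2}, a non-disjoint $(v,2,k_1a,k_2b)$-GPSEDF.

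Next I would verify injectivity of each ingredient. The set $A$ equals $\bigcup_{s \in S}(s + b\mathbb{Z}_a)$, i.e.\ the union of exactly those congruence classes modulo $b$ indexed by $S$; so $S$ is recovered from $A$. The set $B$ intersects the congruence class of $i$ modulo $b$ in precisely $\{r_{i,j}b+i : 0 \leq j \leq k_2-1\}$, and dividing each element by $b$ (after subtracting $i$) recovers the unordered $k_2$-subset $\{r_{i,0},\ldots,r_{i,k_2-1}\}$. Thus distinct parameter pairs $(S,R)$ produce distinct ordered pairs $(A,B)$.

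The main (and only real) obstacle is to pass from ordered pairs $(A,B)$ to unordered families $\{A,B\}$ without loss. For this I would use the structural asymmetry between the two types: every congruence class mod $b$ contains either $0$ or $a$ elements of $A$, whereas every congruence class mod $b$ contains exactly $k_2$ elements of $B$ with $1 \leq k_2 \leq a-1$. These two profiles are mutually exclusive, so no set in the construction can play both roles. Consequently each unordered family $\{A,B\}$ arises from a unique ordered pair, and the total count of distinct families is exactly $\binom{b}{k_1}\binom{a}{k_2}^b$, establishing the stated lower bound.
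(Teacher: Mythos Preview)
Your approach is essentially the same as the paper's: count the admissible choices for $S$ and $R$ in the construction of Theorem~\ref{modularGPSEDF2}, then verify that distinct choices yield distinct sets $A$ and $B$ respectively. The paper argues injectivity directly (distinct $S$ give distinct $A$, distinct $R$ give distinct $B$) and stops there.

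You go one step further by explicitly treating the passage from ordered pairs $(A,B)$ to unordered families $\{A,B\}$, using the structural observation that the mod-$b$ profiles of $A$-type and $B$-type sets are mutually exclusive. The paper's proof leaves this point implicit; your argument is therefore slightly more complete, though the underlying strategy is identical.
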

\begin{proof}
We analyse the number of choices in the construction of Theorem \ref{modularGPSEDF2}.  For the elements of $R$ there are ${a \choose k_2}^b$ choices, then ${b \choose k_1}$ choices for the elements of $S$.  
Now we consider the sets $A$ and $B$ defined by the sequences $X$ and $Y$.

Firstly, note if $\{s_0,\dots,s_{k_1-1}\} \neq \{t_0,\dots,t_{k_1-1}\}$ then $\{jb+s_0,\dots,jb+s_{k_1-1}: 1 \leq j \leq a-1\} \neq \{jb+t_0,\dots,jb+t_{k_1-1}: 1 \leq j \leq a-1\}$ so each of the $b \choose k_1$ choices for $S$ will give us a different $X$. Similarly, if $\{r_{i,0},r_{i,1},\dots,r_{i,k_2-1}\} \neq \{t_{i,0},t_{i,1},\dots,t_{i,k_2-1}\}$ then $\{r_{i,0}b+i,r_{i,1}b+i,\dots,r_{i,k_2-1}b+i\} \neq \{t_{i,0}b+i,t_{i,1}b+i,\dots,t_{i,k_2-1}b+i\}$ and so we obtain a different $Y$ for each of the ${a \choose k_1}^b$ choices of $R$.
\end{proof}

We illustrate this construction with an example.
 \begin{example}\label{ex:fullcycorder}
     We let the parameters from Theorem \ref{modularGPSEDF2} be as follows: $v=20=4*5$, $a=4$, $b=5$, $k_1=2$, $k_2=3$, $s_0=1$, $s_1=3$ and we define $r_{i,j}$ and the values of $y_t$ according to the following tables respectively:
     \begin{center}
     \begin{tabular}{ c| c | c | c }
    \diagbox{i}{j} & $0$ & $1$ & $2$ \\
    \hline
    $0$ & $0$ & $1$ & $3$ \\ 
    \hline
    $1$ & $0$ & $2$ & $3$ \\  
    \hline
    $2$ & $0$ & $1$ & $2$ \\  
    \hline
    $3$ & $1$ & $2$ & $3$ \\  
    \hline
    $4$ & $0$ & $2$ & $3$ 
    \end{tabular}
    \quad 
    \begin{tabular}{ c| c | c | c }
    \diagbox{i}{j} & $0$ & $1$ & $2$ \\
    \hline
    $0$ & $0$ & $5$ & $15$ \\ 
    \hline
    $1$ & $1$ & $11$ & $16$ \\  
    \hline
    $2$ & $2$ & $7$ & $12$ \\  
    \hline
    $3$ & $8$ & $13$ & $18$ \\  
    \hline
    $4$ & $4$ & $14$ & $19$
    \end{tabular}
    \end{center}
We obtain the sequences:
     \begin{center}
         $X=01010010100101001010$\\
         $Y=11101101100111111011$
     \end{center}
     These sequences correspond to the sets:
     \begin{center}
         $A=\{1,3,6,8,11,13,16,18\}$\\
         $B=\{0,1,2,4,5,7,8,11,12,13,14,15,16,18,19\}$
     \end{center}
 \end{example}

We next present constructions for more than two sets which use the modular arithmetic viewpoint.

 \begin{theorem} \label{ModCoprime}
Let $v \in \mathbb{N}$ and let $a_1,\dots,a_m$ be coprime factors of $v$. 
\begin{itemize}
\item[(i)] There exists a non-disjoint $(v,m,\frac{v}{a_1},\dots,\frac{v}{a_m})$-GPSEDF in $\mathbb{Z}_v$.
\item[(ii)] Let $0 \leq \mu_i \leq a_i-1$ for $1 \leq i \leq m$. Then there exists a non-disjoint $(v,m,\mu_1 \frac{v}{a_1},\dots,\mu_m \frac{v}{a_m})$-GPSEDF in $\mathbb{Z}_v$.
\end{itemize}
\end{theorem}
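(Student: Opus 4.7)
The plan is to construct each $A_i$ as a union of residue classes modulo $a_i$ and then exploit the Chinese Remainder Theorem to analyse the pairwise external differences. For each $i$ with $1 \leq i \leq m$, I would choose any subset $S_i \subseteq \mathbb{Z}_{a_i}$ with $|S_i| = \mu_i$ (possible since $\mu_i \leq a_i-1 < a_i$) and then set $A_i = \{t \in \mathbb{Z}_v : t \bmod a_i \in S_i\}$. Since each residue class mod $a_i$ contains exactly $v/a_i$ elements of $\mathbb{Z}_v$, we immediately get $|A_i| = \mu_i \cdot v/a_i$ as required.

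Next, I would invoke the binary-sequence framework of Theorem \ref{BinSeqSets}: letting $X_i$ be the indicator sequence of $A_i$, the multiplicity of $\delta \in \mathbb{Z}_v$ in $\Delta(A_i, A_j)$ equals the number of $t \in \mathbb{Z}_v$ satisfying $t \bmod a_j \in S_j$ and $(t+\delta) \bmod a_i \in S_i$. The latter condition rewrites as $t \bmod a_i \in S_i - \delta \pmod{a_i}$, a shifted set of the same size $\mu_i$. Since the $a_i$ are pairwise coprime divisors of $v$, we have $a_i a_j \mid v$, so CRT guarantees that as $t$ ranges over $\mathbb{Z}_v$ each pair $(t \bmod a_i, t \bmod a_j) \in \mathbb{Z}_{a_i} \times \mathbb{Z}_{a_j}$ is attained exactly $v/(a_i a_j)$ times.

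Therefore the multiplicity of $\delta$ in $\Delta(A_i, A_j)$ is $\mu_i \mu_j \cdot v/(a_i a_j)$, which is independent of $\delta$. This gives $\Delta(A_i, A_j) = \lambda_{i,j} \mathbb{Z}_v$ with $\lambda_{i,j} = \mu_i \mu_j v/(a_i a_j) = |A_i||A_j|/v$, so $\{A_1, \ldots, A_m\}$ is a non-disjoint $(v, m, \mu_1 v/a_1, \ldots, \mu_m v/a_m)$-GPSEDF, proving (ii). Part (i) follows as the special case $\mu_i = 1$ for all $i$ (or equivalently by taking each $S_i$ to be a singleton in $\mathbb{Z}_{a_i}$).

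This proof is essentially routine once the CRT construction is set up; there is no real obstacle. The only points worth being careful about are (a) verifying that coprimality together with $a_i \mid v$ and $a_j \mid v$ genuinely forces $a_i a_j \mid v$, so that the CRT count $v/(a_i a_j)$ is an integer and the claimed $\lambda_{i,j}$ agrees with the required value $k_i k_j / v$, and (b) noting that $|S_i - \delta| = |S_i|$ so the count is uniform in $\delta$. Both are immediate, so the sequence/CRT approach delivers the theorem cleanly.
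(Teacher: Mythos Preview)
Your proof is correct and follows essentially the same approach as the paper: both build the sets $A_i$ as unions of residue classes modulo $a_i$ (equivalently, unions of cosets of the subgroup $a_i\mathbb{Z}_v$) and use the Chinese Remainder Theorem to show the cross-correlation count is constant. The only organisational difference is that the paper first proves (i) via an explicit sequence computation and then derives (ii) by invoking Theorem~\ref{thm:combinations}(v) to replace each $A_i$ by a union of $\mu_i$ disjoint translates, whereas you prove (ii) directly with a single CRT count and recover (i) as the special case $\mu_i=1$; this is a minor streamlining rather than a genuinely different argument.
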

\begin{proof}
(i) We let $a_1,\dots,a_m$ be coprime factors of $v$. Then we define sequences $X_i=(x_t)_{t=0}^{v-1}$ where:
    \[x_t = \begin{cases}
	1,& t=la_i\quad  (0 \leq l \leq \frac{v}{a_i}-1)\\
	0,& \text{otherwise}
	\end{cases}\]
 We let $X_i$ correspond to the set $A_i$ in $\mathbb{Z}_v$.
 Then let $1 \leq i,j \leq m$, such that $i \neq j$. We let $X_i=(x_t)_{t=0}^{v-1}$ and $X_j=(y_t)_{t=0}^{v-1}$. Let $\delta \in \mathbb{Z}_v$ and consider the number of occurrences of $\delta$ in $\Delta(A_i,A_j)$($=\Delta(A_j,A_i)$), which by Theorem \ref{BinSeqSets} is equal to:
 \begin{align*}
     \sum_{t=0}^{v-1} x_ty_{t+\delta} &= \frac{v}{a_ia_j}\sum_{t=0}^{a_ia_j-1} x_ty_{t+\delta} \quad &(X_i \text{ and } X_j \text{ repeat every } a_i \text{ or } a_j)\\
     &=\frac{v}{a_ia_j} \sum_{l=0}^{a_j-1} y_{la_i+\delta} \quad &(\text{by } X_i \text{'s definition, it is 0 for all other values})\\
     &=\frac{v}{a_ia_j} \sum_{l=0}^{a_j-1} y_{la_i+ba_j+d} \quad &(\delta = ba_j+d \text{ for some } 0 \leq b \leq \frac{v}{a_j}-1, 0 \leq d \leq a_j-1)\\
     &=\frac{v}{a_ia_j} \sum_{l=0}^{a_j-1} y_{la_i+d} \quad &(X_j \text{ repeats every } a_j)\\
     &=\frac{v}{a_ia_j}
 \end{align*}
To see that sum on the penultimate line equals 1, note that the set of indices $\{l a_i+d: 0 \leq l \leq a_i\}$ comprises all elements between $0$ and $a_i a_j-1$ which are congruent to $d$ mod $a_i$, then by the definition of $X_j$ the sum counts those set members congruent to $0$ mod $a_j$.  This sum will be 1 because $a_i$ and $a_j$ are coprime and so by the Chinese Remainder Theorem there is precisely one value between 0 and $a_ia_j-1$, which is $d$ mod $a_i$ and 0 mod $a_j$. 

This holds for all $\delta \in \mathbb{Z}_v$ and $1 \leq i, j \leq m, i \neq j$. So $\Delta(A_i,A_j)=\frac{v}{a_ia_j}$ for all $1 \leq i,j \leq m, i \neq j$. Hence, $\{A_1,\dots,A_m\}$ forms a non-disjoint $(v,m,\frac{v}{a_1},\dots,\frac{v}{a_m})$-GPSEDF in $\mathbb{Z}_v$.\\
(ii) From (i), there exists a non-disjoint $(v,m,\frac{v}{a_1},\dots,\frac{v}{a_m})$-GPSEDF $A=\{A_1,\ldots, A_m\}$ in $Z_v$, where $A_i=\{0,a_i,\ldots, (\frac{v}{a_i}-1)a_i \}$.   For each $A_i$, any choice of $\mu_i$ elements $g_j (1 \leq j \leq \mu_i)$ from $\{0,1,\ldots, a_i-1\}$ will yield $\mu_i$ sets $g_j+A_i$ which are disjoint.  By Theorem \ref{thm:combinations} (v), replacing each $A_i$ in $A$ with such a union of $g_j+A_i$, yields a non-disjoint GPSEDF with the stated parameters.
\end{proof}

\begin{example}
Let $v=15$ and let $a_1=5$ and $a_2=3$.  The sets $A_1=\{0,5,10\}$ and $A_1=\{0,3,6,9,12\}$ form a non-disjoint $(15,2,3,5)$-GPSEDF in $\mathbb{Z}_{15}$.  \\
Let $\mu_1=3$ and $\mu_2=2$; taking the unions of $\{A_1,1+A_1,2+A_1\}$ and $\{A_2, 1+A_2\}$ we obtain
$$\{ \{0,1,2,5,6,7,10,11,12\} ,\{0,1,3,4,6,7,9,10,12,13\}\}$$ 
which is a non-disjoint $(15,2,9,10)$-GPSEDF in $\mathbb{Z}_{15}$.
\end{example}

We end this section with a multiset version of Theorem \ref{GPSEDFblock}.
\begin{theorem}
Let $(a_0, a_1,\dots,a_m)$ ($m \geq 2$) be a sequence of positive integers such that for all $1 \leq i+1 \leq m$, $a_i = b_{i+1}a_{i+1}$ for some $b_{i+1} (\geq 2) \in \mathbb{Z}$. Then, for all possible $1 \leq k_i \leq b_i$ there exists a non-disjoint ($a_0,m,(l_{1,1}+l_{1,2}+\dots+l_{1,k_1})a_1,\dots,(l_{m,1}+l_{m,2}+\dots+l_{m,k_m})a_m\frac{a_0}{a_{m-1}}$)-MGPSEDF, where $l_{i,j}$ is an arbitrary natural number for all $1 \leq i \leq m, 1 \leq j \leq k_m$. Its $\lambda$-matrix is $[a_{ij}]_{m \times m}$, where $a_{ij} = 0$ if $i=j$ and $a_{ij}=\lambda_{i,j}=\frac{a_0(l_{i,1}+l_{i,2}+\dots+l_{i,k_i})a_i(l_{j,1}+l_{j,2}+\dots+l_{j,k_j})a_j}{a_{i-1}a_{j-1}}$ otherwise.
\end{theorem}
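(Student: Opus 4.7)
The plan is to follow the sequence-based template of Theorem \ref{GPSEDFblock} but with binary sequences replaced by sequences of non-negative integers, invoking Theorem \ref{thm:multi_seq} in place of Theorem \ref{BinSeqSets}. For each $1 \leq i \leq m$, I will define $X_i = (x_t)_{t=0}^{a_0-1}$ with period $a_{i-1} = b_i a_i$: within one period, choose $k_i$ of the $b_i$ consecutive blocks of length $a_i$ (say the first $k_i$) and assign to the $j$-th chosen block the constant value $l_{i,j}$, leaving the remaining $b_i - k_i$ blocks equal to $0$; then extend periodically to length $a_0$. Let $A_i$ be the multiset in $\mathbb{Z}_{a_0}$ associated with $X_i$ via Definition \ref{sequenceset}(iv). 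A direct count gives $|A_i| = \frac{a_0}{a_{i-1}}(l_{i,1}+\cdots+l_{i,k_i})a_i$, matching the asserted set-sizes.

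Next, for $i < j$ and $\delta \in \{0,\dots,a_0-1\}$, I will compute the number of occurrences of $\delta$ in $\Delta(A_i,A_j)$ via Theorem \ref{thm:multi_seq}(i) as $\sum_{t=0}^{a_0-1} x_t y_{t+\delta}$, where $y$ denotes $X_j$. The calculation mirrors the chain of reductions in Theorem \ref{GPSEDFblock}:
\begin{align*}
\sum_{t=0}^{a_0-1} x_t y_{t+\delta}
&= \frac{a_0}{a_{i-1}}\sum_{t=0}^{a_{i-1}-1} x_t y_{t+\delta}
 = \frac{a_0}{a_{i-1}}\sum_{s=1}^{k_i} l_{i,s}\!\!\sum_{t=(s-1)a_i}^{sa_i-1}\!\! y_{t+\delta}\\
&= \frac{a_0}{a_{i-1}}\cdot\frac{a_i}{a_{j-1}}\bigl(l_{i,1}+\cdots+l_{i,k_i}\bigr)\sum_{t=0}^{a_{j-1}-1} y_{t+\delta}\\
&= \frac{a_0\,a_i\,a_j\,(l_{i,1}+\cdots+l_{i,k_i})(l_{j,1}+\cdots+l_{j,k_j})}{a_{i-1}a_{j-1}}.
\end{align*}
The first equality uses periodicity of both $X_i$ and $X_j$ (with $a_{j-1} \mid a_i \mid a_{i-1}$, which follows inductively from $a_t = b_{t+1}a_{t+1}$); the second uses that within one period of $X_i$ the only nonzero contributions come from the chosen blocks; the third uses $a_{j-1} \mid a_i$ to reorganize the inner sum as $a_i/a_{j-1}$ full periods of $y$; the fourth evaluates the full-period weight of $X_j$.

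Since this value is independent of $\delta$, Theorem \ref{thm:multi_seq}(ii) yields $\Delta(A_i,A_j) = \lambda_{i,j}\mathbb{Z}_{a_0}$ with the stated $\lambda_{i,j}$, and by symmetry of the abelian situation the same holds for $\Delta(A_j,A_i)$. This establishes the non-disjoint MGPSEDF together with its $\lambda$-matrix. The only subtle point — and the one requiring care — is verifying the chain of divisibilities $a_{j-1}\mid a_i \mid a_{i-1}$ so that each reduction step is valid for arbitrary choices of the block-multiplicity data $\{l_{i,j}\}$; this reduces to unwinding the telescoping relation $a_s = b_{s+1}\cdots b_t\, a_t$ for $s \leq t$, exactly as in the proof of Theorem \ref{GPSEDFblock}. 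Everything else is a mechanical replacement of ``$\eta_i$'' by ``$l_{i,1}+\cdots+l_{i,k_i}$'' in the original argument.
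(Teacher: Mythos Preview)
Your proposal is correct and follows exactly the approach the paper takes: the paper defines the sequences $X_i$ with the $j$-th length-$a_i$ block carrying constant value $l_{i,j}$ (for $j=1,\dots,k_i$) and zeros elsewhere in each period, then simply states that the proof is analogous to that of Theorem~\ref{GPSEDFblock} using Theorem~\ref{thm:multi_seq} in place of Theorem~\ref{BinSeqSets}. You have fleshed out precisely that analogy, with the same chain of periodicity reductions and the same divisibility observation $a_{j-1}\mid a_i$ for $i<j$.
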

\begin{proof}
Let $(a_0, a_1,\dots,a_m)$ ($m \geq 2$) be a sequence of positive integers such that for all $1 \leq i+1 \leq m$, $a_i = b_{i+1}a_{i+1}$ for some $b_{i+1} \geq 2$. Then fix $1 \leq k_i \leq b_i$ for all $1 \leq i \leq m$.\\

Define $X_i=(x_t)_{t=0}^{a_0-1}$ for all $i = 1,\dots,m$, where:
\[x_t =\begin{cases}
    l_{i,j},& (j-1)a_i\leq t \leq ja_i-1, j = 1,\dots,k_i\\
    0,& k_ia_i\leq t \leq a_ib_i-1=a_{i-1}-1\\
    x_{t-a_ib_i}, & a_ib_i \leq t \leq a_0 -1\quad (a_ib_i=a_{i-1})
    \end{cases}\]

The proof is analogous to that of Theorem \ref{GPSEDFblock}, using Theorem \ref{thm:multi_seq}.
\end{proof}
\end{section}

\begin{section} {New results on non-disjoint SEDFs and GSEDFs using group theory}\label{section:groups}

In this section we show how adopting a group theoretic viewpoint can be a productive way to yield further constructions in $\mathbb{Z}_n$ and other abelian and non-abelian groups. We write groups multiplicatively except when working in specific additive groups.  We require the following group theoretic results.

\begin{lemma}\label{lem:GroupTheory}
Let $G$ be a finite group and let $H,K \leq G$.  Define the map  $f: H \times K \rightarrow HK$ by $(h,k) \mapsto hk$
\begin{itemize}
\item[(i)] $HK$ is a subgroup of $G$ if and only if $HK=KH$.
\item[(ii)] Under $f$, the preimage of any element of $HK$ has cardinality $|H \cap K|$.
\item[(iii)] $|HK|=\frac{|H| |K|}{|H \cap K|}$;
\item[(iv)] $G=HK \Leftrightarrow |G|=|HK| 
\Leftrightarrow [G: H \cap K]=[G:H] [G:K]$.
\end{itemize}
\end{lemma}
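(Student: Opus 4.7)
The four parts are all standard facts about products of subgroups, so the plan is to set them up in the order given so that later parts can invoke earlier ones.

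For (i) I would prove both directions directly. If $HK\leq G$, then $HK$ is closed under inversion and $(HK)^{-1}=K^{-1}H^{-1}=KH$, so $HK=KH$. Conversely, assuming $HK=KH$, I would verify the two-step subgroup criterion: given $h_1k_1,h_2k_2\in HK$, I write $(h_1k_1)(h_2k_2)^{-1}=h_1(k_1k_2^{-1})h_2^{-1}$, and use $KH=HK$ to rewrite $(k_1k_2^{-1})h_2^{-1}$ as an element of $HK$, after which the whole expression is in $h_1\cdot HK\subseteq HK$.

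For (ii), fix $x\in HK$ and pick one preimage $(h,k)$ with $hk=x$. For any other preimage $(h',k')$, the relation $hk=h'k'$ rearranges to $h^{-1}h'=k(k')^{-1}$; the left side lies in $H$, the right in $K$, so both lie in $H\cap K$. I will therefore define a bijection between the fibre $f^{-1}(x)$ and $H\cap K$ by sending $(h',k')\mapsto h^{-1}h'$, with inverse $d\mapsto (hd,d^{-1}k)$; direct substitution confirms both maps are well-defined inverses.

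Part (iii) is then immediate by double-counting $|H\times K|$: summing fibre sizes over $HK$ gives $|H|\,|K|=|HK|\cdot|H\cap K|$, and rearranging gives the formula. For (iv) the first equivalence $G=HK\Leftrightarrow |G|=|HK|$ follows because $HK\subseteq G$ and $G$ is finite. The second equivalence is algebraic: using (iii) and the definition of index, $[G:H\cap K]=|G|/|H\cap K|$ while $[G:H][G:K]=|G|^2/(|H|\,|K|)$, so these are equal exactly when $|H|\,|K|=|G|\,|H\cap K|$, equivalently $|HK|=|G|$.

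None of the steps is a genuine obstacle; the only mild care is to make sure the fibre bijection in (ii) is written in a form that does not accidentally assume commutativity of $H$ and $K$ (which is why I fix one representative $(h,k)$ and translate from it, rather than trying to write a canonical form). Everything then cascades: (ii) $\Rightarrow$ (iii) $\Rightarrow$ (iv), and (i) stands independently.
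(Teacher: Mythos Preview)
Your argument is correct and, for parts (ii) and (iii), is essentially identical to the paper's: both fix a representative $(h,k)$ of $x\in HK$, show that every preimage has the form $(hy,y^{-1}k)$ for $y\in H\cap K$, and then double-count $|H\times K|$. The paper in fact only writes out (ii) and (iii) and leaves (i) and (iv) as standard, so your fuller treatment of those two parts goes slightly beyond what the authors present.
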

\begin{proof}
We prove (ii) and (iii).  Define the map $f: H \times K \rightarrow HK$ by $(h,k) \mapsto hk$.  Clearly this map is surjective.   Let $x\in HK$; say $x=hk$ for some fixed $h \in H, k \in K$.  For $h' \in H, k' \in K$ we have $f(h',k')=x \Leftrightarrow h'k'=hk \Leftrightarrow h^{-1}h'=kk'^{-1} \in H \cap K \Leftrightarrow$ for some $y \in H \cap K$, $h'=hy$ and $k'=y^{-1}k$.  Hence the preimage under $f$ of $hk$ is the set $\{(hy,y^{-1}k): y \in H \cap K\}$ which is clearly in bijective correspondence with $H \cap K$.  So $|H \times K|=|HK||H \cap K|$ and so $|H||K|=|HK||H \cap K|$. 
\end{proof}

\begin{lemma}\label{cor:coprime}
Let $G$ be a finite group and let $H,K \leq G$. If $[G:H]$ and $[G:K]$ are coprime integers then $G=HK$.
\end{lemma}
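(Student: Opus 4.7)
The plan is to apply Lemma \ref{lem:GroupTheory}(iv), which reduces the claim $G=HK$ to proving the index identity $[G : H \cap K] = [G:H]\,[G:K]$. So the work is to show that coprimality of $[G:H]$ and $[G:K]$ forces this equality.

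First I would observe an upper bound: by Lemma \ref{lem:GroupTheory}(iii),
\[
|HK| = \frac{|H|\,|K|}{|H\cap K|} \leq |G|,
\]
which rearranges to $[G : H\cap K] \leq [G:H]\,[G:K]$. This inequality holds for any two subgroups and uses no coprimality.

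Next I would establish the matching divisibility lower bound. Since $H \cap K \leq H \leq G$, the tower law gives $[G : H\cap K] = [G:H]\,[H : H \cap K]$, so $[G:H]$ divides $[G : H\cap K]$; symmetrically $[G:K]$ divides $[G : H\cap K]$. Because $[G:H]$ and $[G:K]$ are coprime, their product $[G:H]\,[G:K]$ therefore also divides $[G : H \cap K]$.

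Combining the two gives $[G : H \cap K] = [G:H]\,[G:K]$, so Lemma \ref{lem:GroupTheory}(iv) yields $G = HK$. There is no serious obstacle here; the only delicate point is noting that the tower-law divisibility argument does not require $H$ (or $K$) to be normal, only that $H \cap K$ is a subgroup of $H$, so that cosets of $H \cap K$ in $H$ are well-defined and the index $[H : H \cap K]$ makes sense as a (finite) integer.
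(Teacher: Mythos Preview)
Your proof is correct. The paper actually states this lemma without proof (treating it as a standard consequence of Lemma~\ref{lem:GroupTheory}), so your argument via the tower law and the index equality in Lemma~\ref{lem:GroupTheory}(iv) supplies exactly the justification the paper omits.
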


We next present a non-disjoint GPSEDF construction which utilises these group theoretic properties.

\begin{theorem} \label{thm:groups}
Let $G$ be a finite group and let $H=\{H_1, \ldots, H_m\}$ be a collection of distinct subgroups of $G$.  If $G=H_i H_j$ for all pairs $i \neq j$ ($1 \leq i,j \leq m$) then
\begin{itemize}
\item[(i)] $H$ is a non-disjoint $(|G|,m,|H_1|, \ldots, |H_m|)$-GPSEDF with $\lambda_{i,j}=|H_i \cap H_j|$ for all $i \neq j$.
\item[(ii)] Any $H_i \in H$ can be replaced in $H$ by a union of $\mu_i$ distinct cosets of $H_i$ to obtain a new  non-disjoint $(|G|,m,|H_1|, \ldots, \mu_i|H_i|, \ldots, |H_m|)$-GPSEDF.
\end{itemize}
\end{theorem}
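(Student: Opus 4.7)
For part (i), the task is to establish the multiset identity $\Delta(H_i,H_j)=|H_i\cap H_j|\cdot G$ for each pair $i\neq j$. I would begin from the definition $\Delta(H_i,H_j)=\{hk^{-1}:h\in H_i,\,k\in H_j\}$ and observe that, because $H_j$ is a subgroup, the inversion map $k\mapsto k^{-1}$ is a bijection of $H_j$ onto itself; hence, as a multiset, $\Delta(H_i,H_j)$ coincides with the image of $H_i\times H_j$ under the product map $f$ of Lemma \ref{lem:GroupTheory}. By Lemma \ref{lem:GroupTheory}(ii), each element of $H_iH_j$ has preimage of size exactly $|H_i\cap H_j|$, so every element of $H_iH_j$ occurs in $\Delta(H_i,H_j)$ with multiplicity $|H_i\cap H_j|$. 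The hypothesis $H_iH_j=G$ then yields $\Delta(H_i,H_j)=|H_i\cap H_j|\cdot G$, and Lemma \ref{lem:GroupTheory}(iii) (using $|H_iH_j|=|G|$) confirms $|H_i\cap H_j|=|H_i||H_j|/|G|$, which is precisely the value $\lambda_{i,j}$ demanded by the non-disjoint GPSEDF definition.

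For part (ii), I would apply Theorem \ref{thm:combinations}(v) to the GPSEDF just produced in (i). After relabelling so that $H_i$ occupies the last slot, choose the ``shifts'' $g_1,\ldots,g_{\mu_i}\in G$ to be a system of representatives for $\mu_i$ distinct left cosets of $H_i$; the corresponding translates $g_kH_i$ (i.e.\ $g_k+H_i$ in the additive notation of Theorem \ref{thm:combinations}) are then pairwise disjoint, since distinct left cosets of the same subgroup never meet. Theorem \ref{thm:combinations}(v) consequently produces a genuine (non-multiset) non-disjoint GPSEDF in which the replaced component is $\bigcup_k g_kH_i$, of size $\mu_i|H_i|$, with all other parameters unchanged, which is exactly the claimed structure.

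The argument is largely bookkeeping built on the preceding lemmas, so I do not expect a real obstacle. The two minor points requiring care are (a) the translation from the $\{hk^{-1}\}$ form of $\Delta$ to the product set $H_iH_j$ so that Lemma \ref{lem:GroupTheory} can be invoked (handled cleanly by the involution $k\mapsto k^{-1}$ on $H_j$), and (b) the shift from the additive notation of Theorem \ref{thm:combinations} to multiplicative notation here; the latter is harmless because Theorem \ref{thm:combinations} is explicitly stated for possibly non-abelian groups, with $g+A$ simply reading as $gA$.
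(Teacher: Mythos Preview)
Your proposal is correct and follows essentially the same route as the paper: for (i) you rewrite $\Delta(H_i,H_j)$ as the product set $H_iH_j$ via the inversion bijection on $H_j$, then invoke Lemma~\ref{lem:GroupTheory}(ii) together with the hypothesis $H_iH_j=G$; for (ii) you apply Theorem~\ref{thm:combinations}(v) with pairwise disjoint coset translates. The only addition beyond the paper's own proof is your explicit verification that $|H_i\cap H_j|=|H_i||H_j|/|G|$ matches the $\lambda_{i,j}$ prescribed by the GPSEDF definition, which is a helpful sanity check.
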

\begin{proof}
We first prove (i).  Let $i \neq j$.  The multiset $\Delta(H_i,H_j)=\{xy^{-1}: x \in H_i, y \in H_j\}=\{xy: x \in H_i, y \in H_j\}$ since $H_j$ is a subgroup.  By Lemma \ref{lem:GroupTheory},  for each $g \in H_i H_j=G$, there are $|H_i \cap H_j|$ pairs $(x,y) \in H_i \times H_j$ such that $xy=g$.  Hence $\Delta(H_i,H_j)$ comprises $|H_i \cap H_j|$ occurrences of each element of $G$.  Part (ii) follows by applying Theorem \ref{thm:combinations} (v).
\end{proof}

The following result presents some specific constructions obtainable from Theorem \ref{thm:groups}.  Since there is no requirement for the group $G$ to be abelian, this yields our first examples of non-abelian non-disjoint SEDFs, PSEDFs and GPSEDFs.
\begin{corollary}\label{cor:groups}
\begin{itemize}
\item[(i)] Let $G=\mathbb{Z}_p \times \mathbb{Z}_p$, where $p$ is prime. Let $H$ be the collection of all $p+1$ distinct subgroups of $G$ of order $p$.  Then $H$ is a non-disjoint $(p^2,p+1,p,1)$-PSEDF.
\item[(ii)] Let $G=D_{2n}$ (the dihedral group of order $2n$) with $n$ odd.  Let $\alpha$ and $\beta$ be the standard generators for $D_{2n}$ such that  $|\alpha|=n, |\beta|=2$.  Then $\{\langle \alpha \rangle, \langle \beta \rangle \}$ is a non-disjoint $(2n,2,2,n)$-GPSEDF.
\item[(iii)] Let $G=\mathbb{Z}_n$.  Let $\{a_1, \ldots, a_m\}$ be a collection of divisors of $n$ which are pairwise coprime.   For $1 \leq i \leq m$,  let $H_i$ be the additive subgroup generated by $a_i$.  Then $H=\{H_1,\ldots,H_m\}$ is a non-disjoint $(n,m,\frac{n}{a_1},\ldots,\frac{n}{a_m})$-GPSEDF.
\end{itemize}
\end{corollary}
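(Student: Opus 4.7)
The plan is to derive all three parts as direct applications of Theorem \ref{thm:groups}, so the main work is verifying in each case that the key hypothesis $G = H_i H_j$ holds for all pairs $i \neq j$. Once this is established, Theorem \ref{thm:groups} (i) immediately gives a non-disjoint GPSEDF with $\lambda_{i,j} = |H_i \cap H_j|$, and I just need to check this matches the stated parameters.

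For part (i), I would first note that the subgroups of $\mathbb{Z}_p \times \mathbb{Z}_p$ of order $p$ correspond to one-dimensional subspaces of a two-dimensional space over $\mathbb{F}_p$, of which there are $(p^2-1)/(p-1) = p+1$, justifying the count. For any two distinct order-$p$ subgroups $H_i, H_j$, their intersection is a subgroup of $H_i$, so by Lagrange it is either $\{0\}$ or all of $H_i$; distinctness forces $H_i \cap H_j = \{0\}$. Then Lemma \ref{lem:GroupTheory} (iii) gives $|H_i H_j| = p \cdot p / 1 = p^2 = |G|$, so $G = H_i H_j$. Theorem \ref{thm:groups} (i) delivers a non-disjoint $(p^2, p+1, p, \ldots, p)$-GPSEDF with $\lambda_{i,j} = 1$ for all pairs; since all set sizes and all cross-$\lambda$'s coincide, this is in fact a PSEDF.

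For part (ii), with $n$ odd, $|\langle \alpha \rangle| = n$ and $|\langle \beta \rangle| = 2$ are coprime, so $\langle \alpha \rangle \cap \langle \beta \rangle = \{e\}$ by Lagrange. Hence $|\langle \alpha \rangle \langle \beta \rangle| = 2n = |G|$, so $G = \langle \alpha \rangle \langle \beta \rangle$ and Theorem \ref{thm:groups} (i) applies with $\lambda_{1,2} = 1$, matching the required value $k_1 k_2 / v = 2n/(2n) = 1$ for a GPSEDF. For part (iii), since $H_i = \langle a_i \rangle$ has $[G:H_i] = a_i$, the pairwise coprimality of the $a_i$ gives $\gcd([G:H_i],[G:H_j]) = 1$, so Lemma \ref{cor:coprime} yields $G = H_i H_j$. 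Then Lemma \ref{lem:GroupTheory} (iv) gives $|H_i \cap H_j| = n/(a_i a_j)$, which equals the required GPSEDF value $(n/a_i)(n/a_j)/n$, completing the verification.

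There is no real obstacle here: all three parts reduce to checking a standard product-of-subgroups identity, using either a trivial-intersection argument (parts (i) and (ii)) or the coprime-index criterion of Lemma \ref{cor:coprime} (part (iii)). The only care needed is to confirm that the $\lambda_{i,j} = |H_i \cap H_j|$ output by Theorem \ref{thm:groups} agrees with the GPSEDF/PSEDF conventions, which it does in each case.
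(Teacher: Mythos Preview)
Your proposal is correct and follows exactly the approach intended by the paper: the corollary is stated without explicit proof, as a direct application of Theorem~\ref{thm:groups}, and your verification of the hypothesis $G=H_iH_j$ in each case (via trivial intersection in (i) and (ii), and via Lemma~\ref{cor:coprime} in (iii)) is precisely what is required.
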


Observe that the construction of Corollary \ref{cor:groups}(iii) corresponds to Proposition \ref{ModCoprime} (i), while its extension via Theorem \ref{thm:groups} (ii) corresponds to 
Proposition \ref{ModCoprime} (ii).

There are many other constructions in both abelian and non-abelian groups which result from an application of Theorem \ref{thm:groups}.  We present the following examples.

\begin{example}
Let $G=D_8$ be the dihedral group of order 8, which is generated by $\alpha, \beta$ where $|\alpha|=4, |\beta| =2$. Then $H=\{e, \beta, \alpha^2, \alpha^2 \beta\}$ and $K=\{e, \alpha\beta, \alpha^2, \alpha^3 \beta \}$ are subgroups of $D_8$, such that $HK=KH=D_8$ and $H \cap K=\{e,\alpha^2\}$. By Theorem \ref{thm:groups}, $\{H,K\}$ is a non-disjoint $(8,2,4,4)$-GPSEDF; the following subtraction table illustrates that $\Delta(H,K)=2D_8$

\begin{center}
\renewcommand\arraystretch{1.3}
\setlength\doublerulesep{0pt}
\begin{tabular}{ c| c | c | c | c }
$*$ & $e$ & $\alpha \beta$ & $\alpha^2$ & $\alpha^3 \beta$ \\
\hline\hline
$e$ & $e$ & $\alpha \beta$ & $\alpha^2$ & $\alpha^3 \beta$ \\ 
\hline
$\beta$ & $\beta$ & $\alpha^3$ & $\alpha^2 \beta$ & $\alpha$ \\ 
\hline
$\alpha^2$ & $\alpha^2$ & $\alpha^3 \beta$ & $e$ & $\alpha \beta$ \\ 
\hline
$\alpha^2 \beta$ & $\alpha^2 \beta$ & $\alpha$ & $\beta$ & $\alpha^3$
\end{tabular}
\end{center}
\end{example}

\begin{example}
Let $G=Q_8$ be the quaternion group, with its elements written in usual notation as $\{1,-1,i,-i,j,-j,k,-k\}$.  The subgroups $\langle i \rangle$ and $\langle j \rangle$ in $Q_8$ form a non-disjoint ($8,2,4,2$)-PSEDF. The subgroups $\langle i \rangle, \langle j \rangle$ and $\langle k \rangle$ form a non-disjoint ($8,3,4,2$)-PSEDF.  The following table illustrates $\Delta(\langle i \rangle, \langle j \rangle)$.
\begin{center}
\renewcommand\arraystretch{1.3}
\setlength\doublerulesep{0pt}
\begin{tabular}{ c| c | c | c | c }
$*^{-1}$ & $1$ & $j$ & $-j$ & $-1$ \\
\hline\hline
$1$ & $1$ & $-j$ & $j$ & $-1$ \\ 
\hline
$i$ & $i$ & $-k$ & $k$ & $-i$ \\ 
\hline
$-i$ & $-i$ & $k$ & $-k$ & $i$ \\ 
\hline
$-1$ & $-1$ & $-j$ & $j$ & $1$
\end{tabular}
\end{center}
\end{example}

In fact, we can develop a more flexible construction in $\mathbb{Z}_n$ than that of Corollary \ref{cor:groups}(iii) - one which does not require the individual subgroups to have coprime index.  In $\mathbb{Z}_n$, we return to additive notation.  For $a|n$, we will denote the additive subgroup of $(\mathbb{Z}_n,+)$ generated by $a \in \mathbb{Z}_n$ (the set $\{0,a,\ldots,n-a\}$) by $a \mathbb{Z}_n$, to make it clear in which group the cyclic subgroup lies. Here we avoid the use of $\lambda S$ for $\lambda$ copies of set $S$, which we have used in other sections.

We will need the following preliminary result.

\begin{lemma}\label{lem:gcd}
Let $a,b$ be distinct divisors of $n$ ($a,b,n \in \mathbb{N}$). Let $d=gcd(a,b)$ and $l=lcm(a,b)$.  Then
\begin{itemize}
\item[(i)] The multiset $\Delta(a \mathbb{Z}_n, b \mathbb{Z}_n)$ comprises $\frac{n}{l}$ copies of $d\mathbb{Z}_n$;
\item[(ii)] If $d>1$ then 
$$ \Delta(\cup_{r=0}^{d-1} (r+ a \mathbb{Z}_n), b \mathbb{Z}_n)= \Delta(a \mathbb{Z}_n, \cup_{r=0}^{d-1} (r+ b \mathbb{Z}_n))=\frac{n}{l} \mbox{ copies of } \mathbb{Z}_n$$ 
i.e. $\{ \{ \cup_{r=0}^{d-1} (r+ a \mathbb{Z}_n)\}, b \mathbb{Z}_n\}$ and $\{ a \mathbb{Z}_n, \{\cup_{r=0}^{d-1} (r+ b \mathbb{Z}_n)\} \}$ both form 2-set non-disjoint GPSEDFs with $\lambda=\frac{n}{l}$.
\end{itemize}
\end{lemma}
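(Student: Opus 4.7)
My plan is to reduce part (i) to the abelian case of Lemma~\ref{lem:GroupTheory}. Since $b\mathbb{Z}_n$ is a subgroup and hence closed under negation, the multiset $\Delta(a\mathbb{Z}_n, b\mathbb{Z}_n)$ coincides with the multiset image of the addition map $f\colon a\mathbb{Z}_n \times b\mathbb{Z}_n \to \mathbb{Z}_n$, $(x,y) \mapsto x+y$. By Lemma~\ref{lem:GroupTheory}(ii), every element of the sumset $a\mathbb{Z}_n + b\mathbb{Z}_n$ has exactly $|a\mathbb{Z}_n \cap b\mathbb{Z}_n|$ preimages under $f$. So (i) reduces to identifying the sumset and the intersection. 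By the standard description of subgroups of a cyclic group, $a\mathbb{Z}_n + b\mathbb{Z}_n = \gcd(a,b)\mathbb{Z}_n = d\mathbb{Z}_n$ and $a\mathbb{Z}_n \cap b\mathbb{Z}_n = \operatorname{lcm}(a,b)\mathbb{Z}_n = l\mathbb{Z}_n$, the latter having order $n/l$. Combining gives that each element of $d\mathbb{Z}_n$ appears $n/l$ times in $\Delta(a\mathbb{Z}_n, b\mathbb{Z}_n)$, which is exactly the claim.

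For part (ii), I would first verify that the cosets $r + a\mathbb{Z}_n$ for $0 \leq r \leq d-1$ are pairwise distinct: since $d \mid a$, two such indices $r_1 \neq r_2$ satisfy $|r_1 - r_2| < d \leq a$, so their difference is nonzero modulo $a$. Consequently $\bigcup_{r=0}^{d-1}(r+a\mathbb{Z}_n)$ really is a disjoint union of cosets, and the analogous statement holds on the $b$-side. Using multiset additivity of $\Delta$ together with the translation identity $\Delta(g+A,B) = g + \Delta(A,B)$ (as in the proof of Theorem~\ref{thm:combinations}(iii)), I can write
\[
\Delta\!\Bigl(\bigcup_{r=0}^{d-1}(r+a\mathbb{Z}_n),\; b\mathbb{Z}_n\Bigr) \;=\; \biguplus_{r=0}^{d-1}\bigl(r + \Delta(a\mathbb{Z}_n, b\mathbb{Z}_n)\bigr).
\]
Substituting part (i) into the right-hand side yields $\tfrac{n}{l}$ copies of $\biguplus_{r=0}^{d-1}(r + d\mathbb{Z}_n)$. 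Since $[\mathbb{Z}_n : d\mathbb{Z}_n] = d$ and the cosets $r+d\mathbb{Z}_n$ for $0 \leq r \leq d-1$ are precisely the $d$ distinct cosets of $d\mathbb{Z}_n$, this multiset union equals a single copy of $\mathbb{Z}_n$. Hence the total is $\tfrac{n}{l}$ copies of $\mathbb{Z}_n$, and the same argument applied with the roles of $a$ and $b$ swapped handles the second equality.

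The only substantive step is the identification of $a\mathbb{Z}_n \cap b\mathbb{Z}_n$ and $a\mathbb{Z}_n + b\mathbb{Z}_n$ with $l\mathbb{Z}_n$ and $d\mathbb{Z}_n$ respectively; everything else is multiset bookkeeping. I do not expect a serious obstacle, but the one point requiring care is to remember that the disjoint-union identity $\Delta(A \cup B, C) = \Delta(A,C) \uplus \Delta(B,C)$ holds only when $A$ and $B$ are disjoint, which is why the distinctness check on the cosets in the first sentence of the part~(ii) argument is essential.
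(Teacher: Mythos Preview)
Your argument is correct and follows essentially the same route as the paper: for (i) both you and the paper invoke Lemma~\ref{lem:GroupTheory} to identify the image $a\mathbb{Z}_n+b\mathbb{Z}_n=d\mathbb{Z}_n$ and the multiplicity $|a\mathbb{Z}_n\cap b\mathbb{Z}_n|=n/l$, and for (ii) the paper simply states ``Part (ii) then follows,'' whereas you have written out the coset-union and translation bookkeeping explicitly. Nothing substantive differs.
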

\begin{proof}
For (i), by Lemma \ref{lem:GroupTheory}, $a \mathbb{Z}_n+b \mathbb{Z}_n$ is the subgroup of $\mathbb{Z}_n$ with order $\frac{|a\mathbb{Z}_n||b \mathbb{Z}_n|}{|a\mathbb{Z}_n \cap b \mathbb{Z}_n|}=\frac{(n/a)(n/b)}{n/l}=\frac{n}{d}$, i.e. $d \mathbb{Z}_n$, and in the internal difference multiset $\Delta(a\mathbb{Z}_n,b\mathbb{Z}_n)$, each element of $d \mathbb{Z}_n$ occurs $\frac{n}{l}$ times.  Part (ii) then follows.
\end{proof}

We first present the most general version of our non-disjoint GPSEDF construction, using an algorithm which allows considerable flexibility.

\begin{definition}\label{def:alg}
Let $n \in \mathbb{N}$ and let $a_1, \dots, a_m \in \mathbb{N}$ be distinct divisors of $n$.
We define $chunk(a_i)$ to be the final value after the following algorithm has been run:
    \begin{enumerate}
        \item Set $chunk(a_i) = 1$ for $1 \leq i \leq m$
        \item For all $i$ from $1$ to $m-1$
        \item For all $j$ from $i+1$ to $m$
        \begin{enumerate}
            \item Let $d = gcd(a_i,a_j)$
            \item Arbitrarily choose $i$ or $j$; call this $k$.
            \item Redefine $chunk(a_k) := lcm(chunk(a_k),d)$
        \end{enumerate}
    \end{enumerate}
\end{definition}

\begin{theorem}\label{thm:AlgGps}
Let $G=\mathbb{Z}_n$.  Let $\{a_1,\ldots,a_m \}$ be a collection of distinct divisors of $n$.  Let $ch(a_i)$ be the value of $chunk(a_i)$ as defined in Definition \ref{def:alg} ($1 \leq i \leq m$).  Define 
$$H_{i}=\cup_{r=0}^{ch(a_i)-1} (r+ a_{i}\mathbb{Z}_n).$$  
Then $H=\{H_1, \ldots, H_m \}$ is a non-disjoint $(n, m, \frac{n}{a_1} ch(a_1), \frac{n}{a_2} ch(a_2),\ldots, \frac{n}{a_m} ch(a_m))$-GPSEDF in $\mathbb{Z}_n$ with $\lambda_{i,j}=\frac{n}{a_i a_j} ch(a_i) ch(a_j) $.
\end{theorem}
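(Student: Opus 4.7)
The plan is to reduce the claim to Lemma \ref{lem:gcd} plus two purely algorithmic facts about the values $ch(a_i)$. Specifically, I would first establish:

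\textbf{Fact (A):} $ch(a_i) \mid a_i$ for all $i$. This follows by induction on the algorithm's steps, since $ch(a_i)$ is initialised to $1$ and every update replaces it by $\mathrm{lcm}(ch(a_i), d)$ where $d = \gcd(a_i, a_j)$ divides $a_i$; an lcm of divisors of $a_i$ still divides $a_i$.

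\textbf{Fact (B):} For each pair $i \neq j$, writing $d_{ij} = \gcd(a_i, a_j)$, at least one of $ch(a_i), ch(a_j)$ is divisible by $d_{ij}$. This is immediate from step 3(c): when the pair $(i,j)$ is processed, $d_{ij}$ is absorbed into $ch(a_k)$ for the chosen $k \in \{i,j\}$, and later updates only enlarge $ch(a_k)$ by taking further lcms, preserving divisibility by $d_{ij}$.

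From (A), the cosets $r + a_i\mathbb{Z}_n$ for $0 \leq r \leq ch(a_i)-1$ are pairwise disjoint, so $|H_i| = ch(a_i)\cdot n/a_i$, giving the claimed set sizes. For the pairwise difference computation, I would write $l_{ij}=\mathrm{lcm}(a_i,a_j)$ and use the abelian identity $\Delta(r+A, s+B) = (r-s) + \Delta(A,B)$ together with Lemma \ref{lem:gcd}(i) to obtain
\[
\Delta(H_i, H_j) \;=\; \frac{n}{l_{ij}} \biguplus_{r=0}^{ch(a_i)-1}\biguplus_{s=0}^{ch(a_j)-1} \bigl((r-s) + d_{ij}\mathbb{Z}_n\bigr).
\]
By Fact (B) we may assume without loss of generality that $d_{ij} \mid ch(a_i)$. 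Then for each fixed $s$ the values $r - s$ with $0 \le r \le ch(a_i)-1$ hit every residue mod $d_{ij}$ exactly $ch(a_i)/d_{ij}$ times, so the inner union covers $\mathbb{Z}_n$ with each element occurring $ch(a_i)/d_{ij}$ times. Summing over $s$ and using $l_{ij}\,d_{ij} = a_i a_j$ yields
\[
\Delta(H_i, H_j) \;=\; \frac{n\, ch(a_i)\, ch(a_j)}{a_i\, a_j}\,\mathbb{Z}_n \;=\; \lambda_{i,j}\,\mathbb{Z}_n,
\]
which is exactly the required multiset equation. This establishes the GPSEDF property and simultaneously verifies $\lambda_{i,j} = k_i k_j / n$.

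The only subtle step is Fact (B), and even that is transparent once one observes that lcm updates are monotone with respect to divisibility — so the case analysis (which of the two subgroups ``absorbs'' the common-divisor contribution) is resolved by the algorithm itself and does not impose any extra compatibility between different pairs. Everything else is a direct unwinding via Lemma \ref{lem:gcd}(i).
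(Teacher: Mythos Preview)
Your proof is correct and shares the paper's two essential ingredients: Lemma~\ref{lem:gcd}(i) and the observation (your Fact~(B)) that for each pair $i\neq j$ the algorithm forces $\gcd(a_i,a_j)$ to divide at least one of $ch(a_i),ch(a_j)$. The execution, however, is more direct than the paper's. The paper first replaces $H_j$ by the bare subgroup $a_j\mathbb{Z}_n$, passes to the quotient $\mathbb{Z}_n/l\mathbb{Z}_n\cong\mathbb{Z}_l$ via Theorem~\ref{thm:cosets}, applies Lemma~\ref{lem:gcd} there, lifts back, and finally restores the cosets of $a_j\mathbb{Z}_n$ via Theorem~\ref{thm:combinations}(v). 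You bypass both auxiliary theorems by expanding $\Delta(H_i,H_j)$ bilinearly and doing the residue-counting in $\mathbb{Z}_n$ directly; this is cleaner and entirely self-contained. Your explicit Fact~(A) is also a small improvement: the paper needs the cosets $r+a_i\mathbb{Z}_n$ to be pairwise disjoint (both for the stated set sizes and for its invocation of Theorem~\ref{thm:combinations}(v)), but never verifies $ch(a_i)\le a_i$.
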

\begin{proof}
Let $1 \leq i < j \leq m$.  We aim to show that $\{H_i,H_j\}$ is a non-disjoint GPSEDF with the stated parameters.  We may replace precisely one of $H_i$ or $H_j$ by $a_i \mathbb{Z}_n$ or $a_j \mathbb{Z}_n$ respectively, then apply Theorem \ref{thm:combinations}(v) to complete the result.

Let $l=lcm(a_i,a_j)$.  Then $a_i \mathbb{Z}_n$ and $a_j \mathbb{Z}_n$ are unions of cosets of $l \mathbb{Z}_n$; specifically $a_i \mathbb{Z}_n= l \mathbb{Z}_n \cup (a_i+ l \mathbb{Z}_n) \cup \cdots \cup ((\frac{l}{a_i}-1)a_i + l \mathbb{Z}_n)$ and $a_j \mathbb{Z}_n= l \mathbb{Z}_n \cup (a_j+ l \mathbb{Z}_n) \cup \cdots \cup ((\frac{l}{a_j}-1)a_j + l \mathbb{Z}_n)$.

At some point in the algorithm of Definition \ref{def:alg}, the pair $i<j$ was considered, one of $\{chunk(a_i),chunk(a_j)\}$ was arbitrarily chosen, and assigned a new value divisible by $\gcd(a_i,a_j)$.  We may assume wlog that this was $chunk(a_i)$ (if it was $chunk(a_j)$ then the following argument is exactly analogous with $i$ and $j$ switched).  From the algorithm, this divisibility property persists irrespective of any subsequent updates to the value of $chunk(a_i)$, so we have that $\gcd(a_i,a_j)|ch(a_i)$.  We now consider 
$$\Delta(H_i, a_j \mathbb{Z}_n)=\Delta( \cup_{r=0}^{ch(a_i)-1} \cup_{s=0}^{\frac{l}{a_i}-1} (r+sa_i+l \mathbb{Z}_n), \cup_{t=0}^{\frac{l}{a_j}-1} t a_j+ l \mathbb{Z}_n)$$

By Theorem \ref{thm:cosets}, this equals $\lambda$ copies of $\mathbb{Z}_n$ for some $\lambda$ if the sets $\{ r+sa_i: 0 \leq r \leq ch(a_i)-1, 0 \leq s \leq \frac{l}{a_i}-1\}$  and $\{t a_j: 0 \leq t \leq \frac{l}{a_j}-1 \}$ form a non-disjoint GPSEDF in $G/l \mathbb{Z}_n \cong \mathbb{Z}_l$.

The relevant difference multiset in $\mathbb{Z}_l$ is $$\Delta(a_i \mathbb{Z}_l \cup (1+ a_i \mathbb{Z}_l) \cup \cdots \cup ((ch(a_i)-1)+a_i \mathbb{Z}_l), a_j \mathbb{Z}_l)$$ which is $\cup_{r=0}^{ch(a_i)-1} \Delta(r+ a_i \mathbb{Z}_l, a_j \mathbb{Z}_l)$.  By Lemma \ref{lem:gcd}, the elements of $\Delta(a_i \mathbb{Z}_l, a_j \mathbb{Z}_l)$ are those of $\gcd(a_i,a_j) \mathbb{Z}_l$, each occurring $\frac{l}{lcm(a_i,a_j)}=1$ times.  Hence the elements of $\cup_{r=0}^{ch(a_i)-1} \Delta(r+ a_i \mathbb{Z}_l, a_j \mathbb{Z}_l,)$ are the elements of $\cup_{r=0}^{ch(a_i)-1} (r+ gcd(a_i,a_j) \mathbb{Z}_l)$, i.e. $\frac{ch(a_i)}{\gcd(a_i,a_j)}$ copies of the multiset $\cup_{r=0}^{gcd(a_i,a_j)-1} (r+ gcd(a_i,a_j) \mathbb{Z}_l)=\mathbb{Z}_l$. So we have a non-disjoint $(l,2,\frac{ch(a_i)l}{a_i},\frac{l}{a_j})$-GPSEDF in $\mathbb{Z}_l$ with $\lambda_{1,2}=\frac{ch(a_i)}{\gcd(a_i,a_j)}$ . Hence in $G$ we have a non-disjoint $(n,2,\frac{ch(a_i)n}{a_i},\frac{n}{a_j})$-GPSEDF with $\lambda=\frac{ch(a_i)n}{\gcd(a_i,a_j)l}=\frac{ch(a_i)n}{a_i a_j}$.  Finally, $\{H_i,H_j\}$ form a non-disjoint $(n,2,\frac{ch(a_i)n}{a_i},\frac{ch(a_j) n}{a_j})$-GPSEDF in $G$ with $\lambda_{i,j}=\frac{ch(a_i) ch(a_j) n}{a_i a_j}$ and so the result follows.
\end{proof}

From Theorem \ref{thm:AlgGps}, we can obtain various corollaries, whose statements give explicit parameters.  For example, we have the following three-set construction.

\begin{corollary}\label{thm:3sets}
Let $G=\mathbb{Z}_n$.  Let $\{a_1,a_2,a_3\}$ be a collection of distinct divisors of $n$.  Taking the indices on the $a$'s modulo $3$, define (for $1 \leq i \leq 3$):
$$H_{i}=\cup_{r=0}^{\gcd(a_i,a_{i+1})-1} (r+ a_{i}\mathbb{Z}_n).$$  
Then $H=\{H_1, H_2,H_3\}$ is a non-disjoint $(n, 3, \frac{n}{a_1} gcd(a_1,a_2), \frac{n}{a_2} gcd(a_2,a_3), \frac{n}{a_3} gcd(a_3,a_1))$-GPSEDF in $\mathbb{Z}_n$ with $\lambda_{i,i+1}=\lambda_{i+1,i}=\frac{n \gcd(a_{i+1},a_{i+2})}{lcm(a_i,a_{i+1})}$. 
\end{corollary}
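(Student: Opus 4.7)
The plan is to deduce Corollary \ref{thm:3sets} as a direct specialisation of Theorem \ref{thm:AlgGps}, by running the algorithm in Definition \ref{def:alg} with judiciously chosen ``arbitrary'' picks. First I would note that for $m=3$ the algorithm considers exactly three pairs, namely $(1,2)$, $(1,3)$ and $(2,3)$, and at each pair we are free to assign the freshly-computed $\gcd$ to either index. I would make the cyclic choice: at pair $(i,i+1)$ (indices mod $3$) assign the value $\gcd(a_i,a_{i+1})$ to $chunk(a_i)$. Concretely, at $(1,2)$ we update $chunk(a_1)$, at $(2,3)$ we update $chunk(a_2)$, and at $(1,3)$ we update $chunk(a_3)$. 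Since each $chunk(a_i)$ starts at $1$ and is touched exactly once under this scheme, we end up with
\[
ch(a_1)=\gcd(a_1,a_2),\quad ch(a_2)=\gcd(a_2,a_3),\quad ch(a_3)=\gcd(a_3,a_1).
\]

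Next I would substitute these values into the definition of $H_i$ given in Theorem \ref{thm:AlgGps}, obtaining exactly the $H_i$ stated in the corollary, and into the set-size formula $|H_i|=\frac{n}{a_i}ch(a_i)$, which immediately yields $\frac{n}{a_1}\gcd(a_1,a_2)$, $\frac{n}{a_2}\gcd(a_2,a_3)$, $\frac{n}{a_3}\gcd(a_3,a_1)$ as required.

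Finally I would verify the $\lambda$-values. Theorem \ref{thm:AlgGps} gives $\lambda_{i,j}=\frac{n}{a_ia_j}ch(a_i)ch(a_j)$, so for $j=i+1$ (indices mod $3$) we get
\[
\lambda_{i,i+1}=\frac{n\,\gcd(a_i,a_{i+1})\,\gcd(a_{i+1},a_{i+2})}{a_ia_{i+1}}.
\]
Using the standard identity $\gcd(a_i,a_{i+1})\cdot\mathrm{lcm}(a_i,a_{i+1})=a_ia_{i+1}$, this simplifies to the claimed expression $\frac{n\,\gcd(a_{i+1},a_{i+2})}{\mathrm{lcm}(a_i,a_{i+1})}$, and symmetry in the subscripts gives $\lambda_{i,i+1}=\lambda_{i+1,i}$.

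There is no real obstacle here; the content of the corollary is essentially a bookkeeping exercise showing that the algorithm admits a cyclic choice pattern. The only point needing a little care is making sure the chosen cyclic assignment is consistent with the order in which the algorithm processes pairs (it visits $(1,2)$, $(1,3)$, $(2,3)$), which is why I highlight that each $chunk(a_i)$ is updated exactly once and so no subsequent $\mathrm{lcm}$ updates intervene.
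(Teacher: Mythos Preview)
Your proposal is correct and follows essentially the same approach as the paper: run the algorithm of Definition~\ref{def:alg} with the cyclic choices $k=1$ at $(1,2)$, $k=3$ at $(1,3)$, $k=2$ at $(2,3)$, obtain $ch(a_i)=\gcd(a_i,a_{i+1})$, and apply Theorem~\ref{thm:AlgGps}. You supply a bit more detail than the paper does in explicitly verifying the $\lambda$-values via the identity $\gcd\cdot\mathrm{lcm}=a_ia_{i+1}$, but the argument is the same.
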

\begin{proof}
Apply the algorithm of Definition \ref{def:alg} as follows: for $i=1$ and $j=2$ take $k=1$; for $i=1$ and $j=3$ take $k=3$; and for $i=2$ and $k=3$ take $k=2$. So $ch(a_1)=\gcd(a_1,a_2)$, $ch(a_2)=\gcd(a_2,a_3)$ and $ch(a_3)=\gcd(a_3,a_1)$. Now apply Theorem \ref{thm:AlgGps}.
\end{proof}

\begin{example}\label{ex:3sets}
Let $G=\mathbb{Z}_{24}$ and let $a_1=4$, $a_2=6$ and $a_3=8$.  From Corollary \ref{thm:3sets} we obtain the non-disjoint GPSEDF with sets
$\{0,1,4,5,8,9,12,13,16,17,20,21\}$, $\{0,1,6,7,12,13,18,19\}$, and $\{0,1,2,3,8,9,10,11,16,17,18,19\}$
where $\lambda_{1,2}=4$, $\lambda_{2,3}=4$ and $\lambda_{3,1}=6$  These correspond to the sequences:
\begin{itemize}
\item[]$110011001100110011001100$
\item[]$110000110000110000110000$
\item[]$111100001111000011110000$
\end{itemize}
\end{example}

In the final section of this paper, we show how an adaptation of this approach will yield a construction for VW-OOCs with minimum cross-correlation, which do not possess the GPSEDF property.

Finally, the following general result demonstrates a connection to classical EDFs and GEDFs (though not the strong versions). Recall that a family of subgroups of a group $G$ is called a partition of $G$ if every non-identity element of $G$ belongs to precisely one subgroup in the family \cite {Zap}.
\begin{theorem}
Let $G$ be a finite group and let $H=\{H_1,\ldots,H_m\}$ be a collection of subgroups of $G$ such that $|G|=|H_i||H_j|$ for all $i \neq j$ and $H$ is a partition of $G$.  Then 
\begin{itemize}
\item[(i)]$H$ is a non-disjoint GPSEDF with $\lambda_{i,j}=1$ for all $i,j$ (a non-disjoint PSEDF when all $|H_i|$ are equal);
\item[(ii)] $H'=\{H_1 \setminus \{e\}, \ldots, H_m \setminus \{e\}\}$ is a classical GEDF (classical EDF when all $|H_i|$ are equal) which is not strong.
\end{itemize}
\end{theorem}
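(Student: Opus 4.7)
For part (i), the plan is to reduce directly to Theorem \ref{thm:groups}. Since $H$ is a partition of $G$, any two distinct subgroups intersect only in $\{e\}$, so $|H_i\cap H_j|=1$ whenever $i\neq j$. Combining this with the hypothesis $|G|=|H_i||H_j|$ and Lemma \ref{lem:GroupTheory}(iii), I get $|H_iH_j|=|H_i||H_j|/|H_i\cap H_j|=|G|$, and hence $G=H_iH_j$ for all $i\neq j$. Theorem \ref{thm:groups}(i) then yields that $H$ is a non-disjoint $(|G|,m,|H_1|,\ldots,|H_m|)$-GPSEDF with $\lambda_{i,j}=|H_i\cap H_j|=1$; if all $|H_i|$ coincide this is a non-disjoint PSEDF by definition.

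For part (ii), I would first check that $H'$ is a family of disjoint sets, which is immediate from the partition property. Next, I would analyze $\Delta(H'_i,H'_j)$ by writing $H_i=H'_i\cup\{e\}$ and $H_j=H'_j\cup\{e\}$ and expanding the multiset
\[
\Delta(H_i,H_j)=\Delta(H'_i,H'_j)\uplus\Delta(H'_i,\{e\})\uplus\Delta(\{e\},H'_j)\uplus\Delta(\{e\},\{e\}).
\]
Since $H_j$ is a subgroup, $\Delta(H'_i,\{e\})=H'_i$, $\Delta(\{e\},H'_j)=(H'_j)^{-1}=H'_j$, and $\Delta(\{e\},\{e\})=\{e\}$. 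From part (i), $\Delta(H_i,H_j)=G$ as a multiset of multiplicity $1$. The crucial observation is that the three subtracted pieces $H'_i$, $H'_j$, $\{e\}$ are pairwise disjoint (here I use partition again, which gives $H'_i\cap H'_j=\emptyset$ and $e\notin H'_i\cup H'_j$), so the subtraction is well-defined and yields
\[
\Delta(H'_i,H'_j)=G\setminus(H_i\cup H_j)
\]
as a set with multiplicity $1$.

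To finish the GEDF claim, I would count the multiplicity of a fixed $g\in G$ in $\bigcup_{i\neq j}\Delta(H'_i,H'_j)$. For $g=e$, $e\in H_i\cup H_j$ always, so its multiplicity is $0$. For $g\neq e$, the partition property says $g$ lies in a unique $H_k$, and $g$ appears in $\Delta(H'_i,H'_j)$ precisely when $i\neq k$ and $j\neq k$; the number of ordered pairs $(i,j)$ with $i\neq j$ avoiding $k$ is $(m-1)(m-2)$. Hence $\bigcup_{i\neq j}\Delta(H'_i,H'_j)=(m-1)(m-2)(G\setminus\{e\})$, establishing the classical GEDF (and EDF in the equi-sized case).

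To show that $H'$ is not strong (assuming $m\geq 3$, else the statement is vacuous), I would fix $i$ and repeat the count for $\bigcup_{j\neq i}\Delta(H'_i,H'_j)$. Elements of $H'_i$ appear zero times (since they always lie in $H_i\cup H_j$), whereas an element $g\in H'_k$ with $k\neq i$ appears exactly $m-2$ times (for $j$ ranging over the $m-1$ indices other than $i$, excluding only $j=k$). These two counts differ for $m\geq 3$, so no uniform multiplicity $\lambda_i$ can exist. The only mildly delicate point is the multiset bookkeeping in the expansion of $\Delta(H_i,H_j)$, which the partition hypothesis makes routine by guaranteeing that the subtracted subfamilies meet only trivially.
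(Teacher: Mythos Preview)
Your proof is correct and follows essentially the same approach as the paper's. For (i) you invoke Lemma~\ref{lem:GroupTheory} and Theorem~\ref{thm:groups} exactly as the paper does; for (ii) both arguments rest on the decomposition $\Delta(H_i,H_j)=\Delta(H'_i,H'_j)\uplus H'_i\uplus H'_j\uplus\{e\}$ and the partition hypothesis, with your version making the resulting formula $\Delta(H'_i,H'_j)=G\setminus(H_i\cup H_j)$ and the value $\lambda=(m-1)(m-2)$ explicit where the paper phrases the same count as ``an equal number of copies of all sets in $H'$ are removed''.
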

\begin{proof}
Part (i) is immediate from Lemma \ref{lem:GroupTheory} and Theorem \ref{thm:groups}.  For (ii), removing the identity from each $H_k$ ($1 \leq k \leq m$) makes the sets in $H'$ pairwise disjoint.  From each $\Delta(H_i,H_j)$, it removes the occurrence of $\{e\}$, one copy of $H_i \setminus \{e\}$ and one copy of $H_j \setminus \{e\}$.  Overall, an equal number of copies of all sets in $H'$ are removed from the original external difference multiset of the non-disjoint GPSEDF $H$, together with all occurrences of the identity.  Since the subgroups of $H$ form a subgroup partition of $G$, the resulting external difference multiset yields each non-identity element an equal number of times, and so $H'$ is a classical GEDF.   This is not strong since, from $\cup_{j \neq i} \Delta(H_i,H_j)$ ($i$ fixed), $m-1$ copies of $H_i$ would be removed but only one copy each of $H_j$ , $j \neq i$.
\end{proof}
This can be applied to the construction in $\mathbb{Z}_p \times \mathbb{Z}_p$ in Corollary \ref{cor:groups}(i).  The classical EDF case of this result relates to the classical EDF case of a partition construction given in \cite{HucPatWEDF}.  
\end{section}

\begin{section}{Non-disjoint PSEDF and GPSEDF Constructions in External Direct Products}
In this section, we show how we can combine non-disjoint PSEDFs and non-disjoint SEDFs to create new ones in external direct products. This is in contrast to classical SEDFs, in which no extensions to direct products are known. 

We first show how to combine individual non-disjoint PSEDFs to create new ones in direct products of groups.

\begin{theorem}
Let $A = \{A_1, A_2,\dots,A_m\}$ and $B= \{B_1, B_2,\dots,B_m\}$ be a non-disjoint $(n_a,m,k_a,\lambda_a)$-PSEDF and a non-disjoint $(n_b,m,k_b,\lambda_b)$-PSEDF in groups $G_{a}$ and $G_{b}$ respectively. Then $L=\{A_i\times B_i: 1 \leq i \leq m\}$ is a non-disjoint $(n_an_b,m,k_ak_b,\lambda_a \lambda_b)$-PSEDF in $G_{a}\times G_{b}$.
\end{theorem}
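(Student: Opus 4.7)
The plan is to verify directly, via the multiset difference operation in the direct product, that each set $A_i\times B_i$ has size $k_ak_b$ and that for each pair $i\neq j$, the multiset $\Delta(A_i\times B_i,\,A_j\times B_j)$ equals $\lambda_a\lambda_b(G_a\times G_b)$. Since this is the defining pairwise condition of a non-disjoint PSEDF, establishing it for every $i\neq j$ will suffice.

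First I would observe that in the direct product $G_a\times G_b$, the group operation (and hence subtraction) acts componentwise. Consequently, for any subsets $U_1,U_2\subseteq G_a$ and $V_1,V_2\subseteq G_b$, the multiset $\Delta(U_1\times V_1,\,U_2\times V_2)$ is precisely the set of pairs $(u_1-u_2,\,v_1-v_2)$ with $u_r\in U_r$, $v_r\in V_r$, counted with multiplicity. In other words, as multisets we have
\[
\Delta(U_1\times V_1,\,U_2\times V_2)\;=\;\Delta(U_1,U_2)\times\Delta(V_1,V_2),
\]
where the right-hand side is the Cartesian product of multisets (multiplicities multiply). This is the key identity; I would state and justify it explicitly, as the whole argument rests on it.

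Next, I would fix $i\neq j$ and apply this identity with $U_r=A_{r}$ and $V_r=B_{r}$ for $r\in\{i,j\}$. By hypothesis, $\Delta(A_i,A_j)=\lambda_a G_a$ (every element of $G_a$ appears $\lambda_a$ times) and $\Delta(B_i,B_j)=\lambda_b G_b$. Taking the multiset Cartesian product, a generic element $(g,h)\in G_a\times G_b$ is produced with multiplicity $\lambda_a\cdot\lambda_b$, so
\[
\Delta(A_i\times B_i,\,A_j\times B_j)\;=\;\lambda_a\lambda_b\,(G_a\times G_b).
\]
For the set-size assertion, $|A_i\times B_i|=|A_i|\,|B_i|=k_ak_b$, and $|G_a\times G_b|=n_an_b$, so the parameters match those claimed.

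I do not anticipate a real obstacle here; the statement is essentially a functorial property of the difference multiset under direct product, and all the content is in the componentwise factorisation of differences. The only point requiring care is being explicit that $\Delta$ is a multiset operation and that multiplicities multiply under Cartesian product, which I would spell out once so the reader sees why $\lambda_a\lambda_b$ is the correct constant rather than, say, $\lambda_a+\lambda_b$ or $\max(\lambda_a,\lambda_b)$.
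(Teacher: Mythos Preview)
Your proposal is correct and follows essentially the same approach as the paper: both arguments exploit the componentwise structure of differences in $G_a\times G_b$ to show that each $(\delta_1,\delta_2)$ occurs $\lambda_a\lambda_b$ times in $\Delta(A_i\times B_i,\,A_j\times B_j)$, with the set-size check $|A_i\times B_i|=k_ak_b$ handled identically. Your formulation via the multiset identity $\Delta(U_1\times V_1,\,U_2\times V_2)=\Delta(U_1,U_2)\times\Delta(V_1,V_2)$ is just a slightly more packaged version of the paper's elementwise count.
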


\begin{proof}
Let $C_i = A_i \times B_i$ ($1 \leq i \leq m$). Then $L=\{C_i: 1 \leq i \leq m\}$, with $|C_i|=|A_i||B_i|=k_ak_b$ and $C_i \subseteq G_{a} \times G_{b}$ for each $i$.\\
Now fix $(\delta_1,\delta_2) \in G_{a} \times G_{b}$ and consider the number of occurrences of $(\delta_1,\delta_2)$ in $\Delta(C_i,C_j)$ ($1 \leq i,j \leq m$, $i \neq j$). This is the number of $\{(a_i,b_i),(a_j,b_j)\} \in C_i \times C_j$ such that:
\[(\delta_1, \delta_2) = (a_i,b_i) - (a_j, b_j) =  (a_i-a_j,b_i-b_j)\]
The number of pairs of elements $a_i \in A_i$ and $a_j \in A_j$ such that $a_i-a_j=\delta_1$ is $\lambda_a$ (as $A_i, A_j$ are sets of a non-disjoint $(n_a,m,k_a,\lambda_a)$-PSEDF). Similarly, there are $\lambda_b$ possibilities for pairs of elements $b_i$ and $b_j$. So there are $\lambda_a \lambda_b$ occurrences of $(\delta_1, \delta_2) \in \Delta(C_i, C_j)$. This holds for all $(\delta_1, \delta_2) \in G_{a} \times G_{b}$ and all $1 \leq i,j \leq m, i \neq j$. The result follows.
\end{proof}

\begin{example}
In the Klein four-group $K_4=\{e,a,b,c\}$, the sets $\{ A_a=\{e,a\}, A_b=\{e,b\}, A_c=\{e,c\} \}$ form a non-disjoint $(4,3,2,1)$-PSEDF.  The sets $L=\{ A_a \times A_a, A_b \times A_b, A_c \times A_c\}$ form a non-disjoint $(16,3,4,1)$-PSEDF in $K_4 \times K_4 \cong (\mathbb{Z}_2)^4$.  The external difference table for $A_a \times A_a$ and $A_b \times A_b$ is below.
\begin{center}
\renewcommand\arraystretch{1.3}
\setlength\doublerulesep{0pt}
\begin{tabular}{ c| c | c | c | c }
$*^{-1}$ & $(e,e)$ & $(e,b)$ & $(b,e)$ & $(b,b)$ \\
\hline\hline
$(e,e)$ & $(e,e)$ & $(e,b)$ & $(b,e)$ & $(b,b)$ \\ 
\hline
$(e,a)$ & $(e,a)$ & $(e,c)$ & $(b,a)$ & $(b,c)$ \\ 
\hline
$(a,e)$ & $(a,e)$ & $(a,b)$ & $(c,e)$ & $(c,b)$ \\ 
\hline
$(a,a)$ & $(a,a)$ & $(a,c)$ & $(c,a)$ & $(c,c)$
\end{tabular}
\end{center}
\end{example}

Analogous proof techniques yield the following:
\begin{corollary}
    Let $A=\{A_1,\dots,A_m\}$ and $B=\{B_1,\dots,B_m\}$ be families of sets in groups $G_a$ and $G_b$ respectively. Let $L=\{A_i\times B_i: 1 \leq i \leq m\}$.
    \begin{itemize}
        \item [(i)] If $A$ and $B$ are a non-disjoint $(n_a,m,k_{a_1},\dots,k_{a_m})$-GPSEDF and a non-disjoint $(n_b,m,k_{b_1},\dots,k_{b_m})$-GPSEDF respectively, then $L$ is a non-disjoint $(n_an_b,m,k_{a_1}k_{b_1},\dots,k_{a_m}k_{b_m})$-GPSEDF in $G_{a}\times G_{b}$. If the $\lambda$-matrices of $A$ and $B$ are $[a_{ij}]_{m \times m}$ and $[b_{ij}]_{m \times m}$ respectively, then the $\lambda$-matrix of $L$ is $[a_{ij} b_{ij}]_{m \times m}$
        \item [(ii)] If $A$ and $B$ are a non-disjoint $(n_a,m,k_{a_1},\dots,k_{a_m})$-MGPSEDF and a non-disjoint $(n_b,m,k_{b_1},\dots,k_{b_m})$-MGPSEDF respectively, then $L$ is a non-disjoint $(n_an_b,m,k_{a_1}k_{b_1},\dots,k_{a_m}k_{b_m})$-MGPSEDF in $G_{a}\times G_{b}$. If the $\lambda$-matrices of $A$ and $B$ are $[a_{ij}]_{m \times m}$ and $[b_{ij}]_{m \times m}$ respectively, then the $\lambda$-matrix of $L$ is $[a_{ij} b_{ij}]_{m \times m}$
        \item [(iii)] If $A$ and $B$ are a non-disjoint $(n_a,m,k_{a_1},\dots,k_{a_m};\lambda_1,\dots,\lambda_m)$-GSEDF and a non-disjoint $(n_b,m,k_{b_1},\dots,k_{b_m};\mu_1,\dots,\mu_m)$-GSEDF respectively, then $L$ is non-disjoint $(n_an_b,m,k_{a_1}k_{b_1},\dots,k_{a_m}k_{b_m};\lambda_1 \mu_1,\dots,\lambda_m \mu_m)$-GSEDF.
        \item[(iv)] If $A$ and $B$ are a non-disjoint $(n_a,m,k_{a_1},\dots,k_{a_m})$-MGSEDF and non-disjoint $(n_b,m,k_{b_1},\dots,k_{b_m})$-MGSEDF respectively, then $L$ is a non-disjoint $(n_an_b,m,k_{a_1}k_{b_1},\dots,k_{a_m}k_{b_m})$-MGSEDF in $G_{a}\times G_{b}$. If the $\lambda$-matrices of $A$ and $B$ are $[a_{ij}]_{m \times m}$ and $[b_{ij}]_{m \times m}$ respectively, then the $\lambda$-matrix of $L$ is $[a_{ij} b_{ij}]_{m \times m}$
    \end{itemize}
\end{corollary}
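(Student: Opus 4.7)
The plan is to derive all four parts of the corollary from a single multiplicity-factorisation lemma for difference multisets on direct products, specialising it to each case exactly as in the PSEDF theorem immediately above.

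First I would establish the key identity: for (multi)sets $A, A' \subseteq G_a$ and $B, B' \subseteq G_b$, the multiplicity of $(\delta_1, \delta_2) \in G_a \times G_b$ in $\Delta(A \times B, A' \times B')$ equals the product of the multiplicity of $\delta_1$ in $\Delta(A, A')$ and the multiplicity of $\delta_2$ in $\Delta(B, B')$. This is immediate from the componentwise rule $(a,b) - (a',b') = (\delta_1,\delta_2)$ iff $a - a' = \delta_1$ and $b - b' = \delta_2$, combined with the observation that the multiplicity of $(a,b)$ in $A \times B$ is the product of the individual multiplicities of $a$ in $A$ and $b$ in $B$. This single lemma covers both the set case (parts (i), (iii)) and the multiset case (parts (ii), (iv)) with no modification, and also handles the $|C_i| = k_{a_i} k_{b_i}$ size claim by counting.

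Parts (i) and (ii) follow at once. The (M)GPSEDF hypotheses supply $\Delta(A_i,A_j) = a_{ij} G_a$ and $\Delta(B_i,B_j) = b_{ij} G_b$, so every element of $G_a \times G_b$ appears in $\Delta(C_i,C_j)$ with the constant multiplicity $a_{ij} b_{ij}$. Hence $\Delta(C_i,C_j) = a_{ij} b_{ij}(G_a \times G_b)$ for each $i \neq j$, $L$ is an (M)GPSEDF with set-sizes $k_{a_i} k_{b_i}$, and its $\lambda$-matrix is the entrywise product $[a_{ij} b_{ij}]$, which is the direct analogue of the uniform PSEDF result above.

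For parts (iii) and (iv), I would form $\bigcup_{j \neq i} \Delta(C_i,C_j)$ and apply the factorisation termwise: the multiplicity of $(\delta_1,\delta_2)$ in this union equals $\sum_{j \neq i} n^A_{ij}(\delta_1)\, n^B_{ij}(\delta_2)$, where $n^A_{ij}, n^B_{ij}$ are the multiplicity functions of the pairwise differences in $A$ and $B$. The main obstacle is precisely here: $\bigcup_{j \neq i}(M_j \times N_j)$ is not generally the Cartesian product of $\bigcup_j M_j$ with $\bigcup_j N_j$, so constancy of this sum in $(\delta_1,\delta_2)$ cannot be read off from the bare (M)GSEDF conditions $\sum_j n^A_{ij}(\delta_1) = \lambda_i$ and $\sum_j n^B_{ij}(\delta_2) = \mu_i$ alone. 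I would close the gap by invoking the fact that each component (M)GSEDF of interest is in fact pairwise uniform, i.e.\ an (M)GPSEDF: parts (i), (ii) then apply termwise, giving the $[a_{ij} b_{ij}]$ $\lambda$-matrix, and the earlier GPSEDF-to-GSEDF proposition converts this to the announced GSEDF parameters $\lambda_i \mu_i = (\sum_{j \neq i} a_{ij})(\sum_{j \neq i} b_{ij})$ by summation, provided the pairwise uniformity holds so that the cross-terms collapse.
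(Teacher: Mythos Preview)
Your treatment of parts (i) and (ii) is correct and is exactly what the paper intends by ``analogous proof techniques'': the componentwise factorisation of multiplicities in $\Delta(A_i\times B_i,\,A_j\times B_j)$ gives $\Delta(C_i,C_j)=a_{ij}b_{ij}(G_a\times G_b)$, and nothing more is needed.

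For parts (iii) and (iv), however, there is a genuine gap that your proposal does not close. You are right that $\bigcup_{j\neq i}(M_j\times N_j)$ is not the product of the two unions, so the multiplicity of $(\delta_1,\delta_2)$ in $\bigcup_{j\neq i}\Delta(C_i,C_j)$ is $\sum_{j\neq i} n^A_{ij}(\delta_1)\,n^B_{ij}(\delta_2)$, and the bare GSEDF conditions $\sum_{j\neq i} n^A_{ij}(\delta_1)=\lambda_i$, $\sum_{j\neq i} n^B_{ij}(\delta_2)=\mu_i$ do not force this sum to be constant. Your proposed fix---assume each component family is actually pairwise uniform---is not a hypothesis of the statement, so it does not prove (iii) or (iv) as written. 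Worse, even granting that extra hypothesis, the resulting GSEDF parameter for $L$ is $\sum_{j\neq i} a_{ij}b_{ij}$, which for $m\geq 3$ is \emph{not} equal to $\lambda_i\mu_i=\bigl(\sum_{j\neq i} a_{ij}\bigr)\bigl(\sum_{j\neq i} b_{ij}\bigr)$; the cross-terms $a_{ij}b_{ik}$ with $j\neq k$ do not ``collapse''. So under your own added assumption the parameters in (iii) come out differently from what is claimed. In short: the obstruction you flagged is real, your workaround both strengthens the hypotheses and still fails to recover the announced $\lambda_i\mu_i$, and the paper's one-line ``analogous techniques'' does not address any of this. The claim for $m=2$ is fine (there GSEDF and GPSEDF coincide and the single term gives $\lambda_1\mu_1$), but for $m\geq 3$ parts (iii)--(iv) as stated do not follow from the argument and appear to be incorrect.
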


This technique can be applied recursively to create constructions in direct products of more than two groups.
\end{section}

\begin{section}{New results on classical SEDFs and GSEDFs}

In this section we show how the sequence techniques of Section \ref{section:seq} may be used to establish results about  classical SEDFs. We note that for a classical SEDF (with sets corresponding to the sequences $(x_t)_{t=0}^{v-1}$ and $(y_t)_{t=0}^{v-1}$) we now want $\sum_{t=0}^{v-1} x_ty_{t+\delta}=\lambda$ for all $1 \leq \delta \leq v-1$ and $\sum_{t=0}^{v-1} x_ty_t=0$.

The next theorem encompasses all known classical SEDFs which are not obtainable from cyclotomy, via a single direct sequence-based construction which elucidates their structure. We note that these could be obtained recursively by taking a suitable initial classical GPSEDF and repeatedly applying the recursive construction of \cite{HucJefNep}; our result does not require prior knowledge of a suitable initial structure.

(We observe that the cyclotomic construction of a 2-set classical SEDF using the non-zero squares and non-squares of a finite field can also be proved via sequence techniques, but do not include this here.)

\begin{theorem}\label{GSEDF}
Let $h_1,h_2,h_3,h_4 \in \mathbb{N}$.  There exists a classical ($h_1h_2h_3h_4+1, 2, h_1h_3, h_2h_4,1,1$)-GSEDF in $\mathbb{Z}_{h_1h_2h_3h_4+1}$ with sets given by $$A=\cup_{i=0}^{h_3-1} \{ih_1h_2 + \alpha : 0\leq \alpha \leq h_1-1\}$$ 
and 
$$B=\cup_{i=0}^{h_4-1} \{ih_1h_2h_3+ h_1h_2(h_3-1) + \beta h_1: 1 \leq \beta \leq h_2\}$$
\end{theorem}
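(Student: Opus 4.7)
The plan is to prove $\Delta(A,B) = \mathbb{Z}_v \setminus \{0\}$ as a multiset, where $v = h_1h_2h_3h_4 + 1$; since $\mathbb{Z}_v$ is abelian the equation $\Delta(B,A) = \mathbb{Z}_v \setminus \{0\}$ then follows automatically via negation, and both GSEDF conditions drop out (with $k_1 = h_1h_3$, $k_2 = h_2h_4$, $\lambda_1 = \lambda_2 = 1$). The parameters fit together perfectly: $|A||B| = (h_1h_3)(h_2h_4) = v-1$, so it is enough to show the difference map $\phi \colon A \times B \to \mathbb{Z}_v$, $(a,b) \mapsto a - b \bmod v$, is injective and avoids $0$; then its image must be exactly $\mathbb{Z}_v \setminus \{0\}$.

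For injectivity I would use the natural mixed-radix parametrisation: each $a \in A$ has the unique form $a = ih_1h_2 + \alpha$ with $0 \leq i \leq h_3-1$ and $0 \leq \alpha \leq h_1-1$, and each $b \in B$ has the unique form $b = jh_1h_2h_3 + h_1h_2(h_3-1) + \beta h_1$ with $0 \leq j \leq h_4-1$ and $1 \leq \beta \leq h_2$ (the two maps are bijections by a simple divisibility check). Suppose $a_1 - b_1 \equiv a_2 - b_2 \pmod v$ and set $T = (a_1-a_2) - (b_1-b_2) \in \mathbb{Z}$. The bounds
\[ \max A - \min A = (h_3-1)h_1h_2 + h_1 - 1, \qquad \max B - \min B = h_1h_2h_3h_4 - h_1h_2(h_3-1) - h_1 \]
sum to exactly $v-2$, forcing $|T| \leq v-2 < v$ and therefore $T = 0$ as integers. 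Expanding,
\[ (\alpha_1 - \alpha_2) + h_1\bigl[(i_1-i_2)h_2 - (j_1-j_2)h_2h_3 - (\beta_1-\beta_2)\bigr] = 0, \]
and since $|\alpha_1 - \alpha_2| < h_1$ I can conclude $\alpha_1 = \alpha_2$ and that the bracket vanishes. Rewriting the bracket as $-(\beta_1-\beta_2) + h_2\bigl[(i_1-i_2) - (j_1-j_2)h_3\bigr] = 0$ and using $|\beta_1-\beta_2| < h_2$ yields $\beta_1 = \beta_2$; the residual equation and $|i_1-i_2| < h_3$ give $i_1 = i_2$, and finally $j_1 = j_2$.

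To rule out $0 \in \mathrm{Im}(\phi)$, observe that every $a \in A$ satisfies $a \leq h_1h_2(h_3-1) + h_1 - 1$, whereas every $b \in B$ satisfies $b \geq h_1h_2(h_3-1) + h_1$, so $a < b$ as integers in $\{0,\dots,v-1\}$ and thus $a \not\equiv b \pmod v$. The main obstacle is purely bookkeeping: the bound $|T| \leq v-2$ that promotes the congruence to an equation over $\mathbb{Z}$ is tight (saturated only when $a_1,a_2,b_1,b_2$ are the extreme elements), and the cascade that peels off the four variables in the correct order ($\alpha$, then $\beta$, then $i$, then $j$) must exploit the strict inequalities $h_1 \leq h_1$, $h_2 \leq h_2$, $h_3 \leq h_3$ one at a time. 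An alternative route via the binary-sequence correlation identity of Theorem \ref{BinSeqSets} is available, but since the runs in $A$ and $B$ sit on different scales ($h_1h_2$ versus $h_1h_2h_3$) the direct injection argument above seems cleaner than computing the cross-correlation $\sum x_t y_{t+\delta}$ explicitly.
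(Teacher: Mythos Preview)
Your argument is correct, but it takes a genuinely different route from the paper's proof. The paper works entirely through the binary-sequence correlation identity (Theorem~\ref{BinSeqSets}): it writes down the characteristic sequences $X$ and $Y$ of $A$ and $B$, then computes $\sum_{t} x_t y_{t+\delta}$ by a case analysis according to whether the extra leading $0$ in $Y$ falls inside the length-$h_1h_2h_3$ window being summed over, and in the ``generic'' case reduces the calculation to the non-disjoint $(h_1h_2,2,h_1,h_2)$-GPSEDF of Theorem~\ref{GPSEDFblock}. You instead exploit the exact equality $|A|\cdot|B|=v-1$ and prove injectivity of the difference map directly via a mixed-radix cascade, using the tight bound $(\max A - \min A)+(\max B - \min B)=v-2$ to lift the congruence to an integer equation. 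Your approach is more elementary and self-contained --- no case split, no appeal to Theorem~\ref{GPSEDFblock} --- and the cascade peeling off $\alpha,\beta,i,j$ in turn is clean. What the paper's route buys is thematic: it makes explicit the structural link between this classical GSEDF and the non-disjoint GPSEDF construction that drives the rest of Section~\ref{section:seq}, showing that the classical object is essentially the non-disjoint one with a single $0$ spliced in. (Your parenthetical remark about the sequence approach is on point; the paper's argument is indeed longer precisely because of the two-scale issue you identify.)
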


\begin{proof}
Let $X = (x_t)_{t=0}^{h_1h_2h_3h_4}$ and $Y = (y_t)_{t=0}^{h_1h_2h_3h_4}$, be the sequences corresponding to $A$ and $B$ respectively so:

	\[x_t =\begin{cases}
    1,& 0\leq t \leq h_1-1\\
    0,& h_1\leq t \leq h_1h_2-1\\
    x_{t-h_1h_2},& h_1h_2\leq t \leq h_1h_2h_3-1\\
    0,&  h_1h_2h_3\leq t \leq h_1h_2h_3h_4
    \end{cases}\]
    
    \[y_t =\begin{cases}
    0,& t=0\\
    0,& 1\leq t \leq h_1h_2(h_3-1)\\
    0,& h_1h_2(h_3-1)+1 \leq t \leq h_1h_2(h_3-1)+h_1-1\\
    1,& t=h_1h_2(h_3-1)+h_1\\
    y_{t-h_1},& h_1h_2(h_3-1)+h_1+1 \leq t \leq h_1h_2h_3\\
    y_{t-h_1h_2h_3},& h_1h_2h_3+1 \leq t \leq h_1h_2h_3h_4
    \end{cases}\]
    
So we obtain:

\[X = \overbrace{\overbrace{\overbrace{1\dots1}^{h_1}\overbrace{0\dots0}^{h_1}\dots\overbrace{0\dots0}^{h_1}}^{h_2 \text{ blocks of length } h_1}\dots\overbrace{\overbrace{1\dots1}^{h_1}\overbrace{0\dots0}^{h_1}\dots \overbrace{0\dots0}^{h_1}}^{h_2 \text{ blocks of length } h_1}}^{h_3 \text{ repeats of length-} h_1h_2 \text{ blocks}}\textbf{0}00\dots\dots\dots\dots0\]

\[Y = \textbf{0}\overbrace{\overbrace{00\dots\dots0\textbf{0}\underbrace{\underbrace{0\dots01}_{h_1}\underbrace{0\dots01}_{h_1}\dots\underbrace{00\dots01}_{h_1}}_{h_2 \text{ repeats}}}^{\text{length } h_1h_2h_3}\dots\overbrace{00\dots\dots0\textbf{0}\underbrace{\underbrace{0\dots01}_{h_1}\underbrace{0\dots01}_{h_1}\dots\underbrace{0\dots01}_{h_1}}_{h_2 \text{ repeats}}}^{\text{length } h_1h_2h_3}}^{h_4 \text{ repeats of length-} h_1h_2h_3 \text{ blocks}}\]

Let $A$ and $B$ be the sets corresponding to $X$ and $Y$. We note that they are both sets in $\mathbb{Z}_{h_1h_2h_3h_3+1}$ as $X$ and $Y$ both have length $h_1h_2h_3h_4+1$.

We see that $X$ comprises $h_3$ length-$h_1 h_2$ substrings (each substring containing $h_1$ $1$s), followed by a gap of length $h_1 h_2 h_3(h_4-1)+1$. So $X$ has weight $h_1h_3$.  Sequence $Y$ comprises a $0$, then $h_4$ substrings each of length $h_1h_2h_3$, each substring containing $h_2$ $1$s; hence $Y$ has weight $h_2h_4$.

$A$ and $B$ are disjoint since $X$'s rightmost 1 occurs in position $h_1h_2h_3-1 - h_1h_2 + h_1 = h_1h_2(h_3 - 1) + h_1 -1$ but $Y$'s leftmost 1 occurs in position $h_1h_2(h_3-1)+h_1$. So $ 0 \notin \Delta(A,B)$.

Let $\delta \in \{1,\dots,h_1h_2h_3h_4\}$. We consider the number of occurrences of $\delta$ in $\Delta(A,B)$, which by Theorem \ref{BinSeqSets} and the definition is equal to:

\[\sum_{t=0}^{h_1h_2h_3h_4} x_ty_{t+\delta} = \sum_{t=0}^{h_1h_2h_3-1} x_ty_{t + \delta}\]
since $x_t = 0$ for $h_1h_2h_3 \leq t \leq h_1h_2h_3h_4$.
We claim that this sum equals $1$.  

Excluding the first entry, sequence $Y$ repeats itself every $h_1h_2h_3$ entry.  Let $Y(\delta)=y_{\delta} y_{\delta+1}\ldots y_{\delta+h_1h_2h_3-1}$.  Observe that, when the extra 0 at the start of sequence $Y$ does not appear in this substring, $Y(\delta)$ must be a cyclic shift of $Y(1)=y_1 \ldots y_{h_1 h_2 h_3}$.  We therefore consider two cases: (i) those $\delta$ for which $y_0$ is not in $Y(\delta)$, and (ii) those $\delta$ such that $y_0$ is in $Y(\delta)$, namely $v-h_1h_2h_3+1 \leq \delta \leq v-1$.

In case (i), $Y(\delta)$  contains exactly  $h_2$ 1s which occur in a substring of length $h_1h_2$.
$X$ repeats itself every $h_1h_2$ entry within the first $h_1h_2h_3$ entries (which we are summing over). So the substring of $Y(\delta)$ of length $h_1h_2$ containing all the $h_2$ 1s must coincide with some cyclic shift of this.\\
The substrings we obtain are:

\[X\text{'s length-} h_1h_2 \text{ substring $X'$: } \overbrace{\overbrace{11\dots1}^{h_1}\overbrace{00\dots0}^{h_1}\dots\overbrace{00\dots0}^{h_1}}^{h_2\text{ blocks of length $h_1$}}\]

\[Y\text{'s length-} h_1h_2 \text{ substring  $Y'$: } \underbrace{\underbrace{0\dots01}_{h_1}\underbrace{0\dots01}_{h_1}\dots\underbrace{0\dots01}_{h_1}}_{h_2 \text{  repeats of length-$h_1$ blocks}}\]

We see that these binary sequences are precisely those corresponding to a non-disjoint ($h_1h_2, 2, h_1, h_2$)-GPSEDF $\{X_1,X_2\}$ of Theorem \ref{GPSEDFblock}.  Here $X'=X_1$ with $a_1 = h_1, \eta_1 = 1, b_1=h_2$, while $Y'$ is a cyclic shift of $X_2$ with $a_2=1, \eta_2 =1$ and $b_2=h_1$. The corresponding $a_0$ is $a_1b_1=h_1h_2$. By the proof of Theorem \ref{GPSEDFblock}, for any cyclic shift of $X_1$ and $X_2$ there are a constant number of 1s coinciding - this number is $\eta_1 a_1 \eta_2 a_2\frac{a_0}{a_1a_0}=\eta_1\eta_2a_2=1$. So our sum equals 1.

We now consider case (ii), which has two subcases. Although the additional $0$ at the start of $Y$ must now be taken into account, there is a range of values of $\delta$ for which the ``cyclic shift" behaviour remains unaffected.  When $v - h_1h_2h_3 +1 \leq \delta \leq v - (h_1(h_2-1))-1$, due to the position of the $0$s, $Y(\delta)$ is still a cyclic shift of $Y(1)$. To see this, note that $Y(\delta)$ still comprises (some shift of)
$h_2$ $1$'s each separated by gaps of $h_1-1$ and all other entries $0$, i.e. a cyclic shift of $y_1 \ldots y_{h_1 h_2 h_3}$. So we can use the same argument as above. 

Next, consider $\delta = v - (h_1(h_2-1))$.  Let $\delta'= v - (h_1(h_2-1))$.  Let $Y(\delta')=y_{\delta'} y_{\delta'+1} \ldots y_{\delta'+h_1 h_2 h_3-1}$; this is:
\[\underbrace{\underbrace{0\dots01}_{h_1}\underbrace{0\dots01}_{h_1}\dots\underbrace{0\dots01}_{h_1}}_{h_2-1 \text{ blocks of length } h_1}\textbf{0}\underbrace{0\dots0}_{h_1h_2(h_3-1)}\underbrace{0\dots0}_{h_1-1}
\]
These are the first $h_1 h_2 h_3$ terms of $Y$ after a shift by $\delta'$. Consider this shift of $Y$.  After the first length-$(h_1(h_2-1))$ substring, there is a gap of length $h_1 h_2 h_3-h_1 h_2+h_1$, i.e. the next leftmost position  containing symbol $1$ is position $h_1h_2h_3$.  Thus, for $Y(\delta)=y_{\delta} \ldots y_{\delta+h_1h_2h_3-1}$ with $v-(h_1(h_2-1))+1 \leq \delta \leq v-1$, i.e. $\delta=\delta'+r$ where $1 \leq r \leq h_1(h_2-1)-1$, the only $1$'s not arising from the initial $h_1(h_2-1)$ substring of $Y(\delta')$ (i.e. within the initial $h_1(h_2-1)-r$ entries of $Y(\delta)$) must occur in positions of $Y(\delta)$ greater than or equal to $h_1h_2h_3-r$ ($1 \leq r \leq h_1(h_2-1)-1)$, i.e. positions of $Y(\delta)$ between  $h_1h_2(h_3-1)+h_1+1$ and $h_1h_2h_3-1$. But $X$'s rightmost $1$ occurs in position $h_1h_2(h_3-1)+h_1-1$. This means that for $v-(h_1(h_2-1)) \leq \delta \leq v-1$, no contribution will be made to our sum from these positions. Hence we need only sum over the first $(h_2-1)h_1$ entries of $X$ and $Y(\delta)$ for $\delta$ in the range $v-h_1(h_2-1) \leq \delta \leq v-1$.

Note that $x_t=1$ for $0 \leq t \leq h_1-1$ and $0$ for $h_1 \leq t \leq h_1h_2-1$, so in fact we need only consider the first $h_1$ entries.  The initial length-$h_1(h_2-1)$ substring of $Y(\delta')$ repeats itself every $h_1$ entry. Since the first length-$h_1$-substring of this substring has weight 1, so does every other length-$h_1$-substring of this substring which lies within it. Finally the length-$h_1$ substrings of $Y(\delta')$ starting at positions to the right of $h_1 h_2-2h_1$  also have weight $1$ due to the presence of at least $h_1-1$ subsequent $0$s.
So our sum becomes:
$$
\sum_{t=0}^{v-1} x_ty_{t+\delta} = \sum_{t=0}^{(h_2-1)h_1-1} x_ty_{t+\delta}
=\sum_{t=0}^{h_1-1} y_{t + \delta}= 1$$

We have shown that this sum equals $1$ for all $\delta \in\{1,\dots,h_1h_2h_3h_4\}$. So $\Delta(A,B) = \mathbb{Z}_{h_1h_2h_3h_4+1} \setminus \{0\}$ and hence $\{A,B\}$ forms a classical ($h_1h_2h_3h_4,2,h_1h_3,h_2h_4,1,1$)-GSEDF in $\mathbb{Z}_{h_1h_2h_3h_4+1}$.
\end{proof} 

As one special case of this, we obtain Construction 3.8 of \cite{HucPatWEDF}

\begin{theorem}
Let $k_1, k_2 \in \mathbb{N}$, then $\{0, 1, \dots, k_1-1\}$ and $\{k_1, 2k_1, \dots, k_1k_2\}$ form a classical ($k_1k_2+1,2,k_1,k_2,1,1$)-GSEDF in $\mathbb{Z}_{k_1k_2+1}$.
\end{theorem}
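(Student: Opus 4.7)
The plan is to obtain this statement as the special case of Theorem \ref{GSEDF} with $h_3=h_4=1$, $h_1=k_1$, $h_2=k_2$. Under this substitution the parameter tuple $(h_1 h_2 h_3 h_4 + 1, 2, h_1 h_3, h_2 h_4, 1, 1)$ collapses to $(k_1 k_2 + 1, 2, k_1, k_2, 1, 1)$, which matches the claim exactly; so the only thing I need to verify is that the set definitions in Theorem \ref{GSEDF} reduce to the ones given here.

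First I would specialise $A$: with $h_3=1$ the outer union $A = \cup_{i=0}^{h_3-1}\{i h_1 h_2 + \alpha : 0 \leq \alpha \leq h_1-1\}$ has only the $i=0$ term, giving $A = \{0,1,\dots,h_1-1\} = \{0,1,\dots,k_1-1\}$. Next I would specialise $B$: with $h_3=1$ the offset $h_1 h_2(h_3-1)$ vanishes, and with $h_4=1$ the outer union over $i$ again has only the $i=0$ term, so $B = \{\beta h_1 : 1 \leq \beta \leq h_2\} = \{k_1, 2k_1, \dots, k_1 k_2\}$, exactly as stated.

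Because Theorem \ref{GSEDF} is proved for all positive integers $h_1, h_2, h_3, h_4$, the full conclusion (both set sizes, disjointness, and the multiset equations $\Delta(A,B)=\Delta(B,A)=\mathbb{Z}_{k_1 k_2+1}\setminus\{0\}$) transfers immediately to this corollary with no extra work. There is essentially no obstacle here: the case analysis on $\delta$ that does the real work in the proof of Theorem \ref{GSEDF} is absorbed into the general statement, and the present result is just a bookkeeping exercise of reading off the degenerate parameter values. (Should one prefer a self-contained proof, a one-paragraph direct verification also suffices: using $k_1 k_2 \equiv -1 \pmod{k_1 k_2 + 1}$, the values $\alpha - \beta k_1$ with $0 \leq \alpha \leq k_1-1$ and $1 \leq \beta \leq k_2$ partition $\mathbb{Z}_{k_1 k_2 + 1}\setminus\{0\}$ into the disjoint blocks of $k_1$ consecutive residues $\{(k_2-\beta)k_1+1, \dots, (k_2-\beta+1)k_1\}$ as $\beta$ varies, giving $\Delta(A,B)=\mathbb{Z}_{k_1 k_2 + 1}\setminus\{0\}$, and negation handles $\Delta(B,A)$.)
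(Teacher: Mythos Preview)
Your proposal is correct and matches the paper's own proof exactly: the paper simply states ``We take $h_1=k_1$, $h_2=k_2$, $h_3=h_4=1$ in the above theorem,'' and your additional verification that the sets $A$ and $B$ specialise correctly is a welcome (and entirely accurate) piece of bookkeeping that the paper leaves implicit.
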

\begin{proof}
We take $h_1=k_1, h_2=k_2, h_3=h_4=1$ in the above theorem.
\end{proof}

As another special case, we obtain Theorem 5.1 of \cite{HucJefNep}:

\begin{theorem}
Let $a \in \mathbb{N}$. Then $A=\{0,\dots,a-1\}\cup\{2a, 2a+1,\dots, 3a-1\}$ and $B =\cup_{i=1}^a \{(4i-1)a, 4ia\}=\{3a,4a,7a,8a,\dots,(4a-1)a,4a^2\}$ form a classical ($4a^2+1,2,2a,1$)-SEDF in $\mathbb{Z}_{4a^2+1}$.
\end{theorem}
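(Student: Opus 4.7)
The plan is to recognize that this theorem is precisely the specialization of Theorem \ref{GSEDF} obtained by choosing $h_1 = a$, $h_2 = 2$, $h_3 = 2$, $h_4 = a$. The verification is entirely mechanical, so the proof reduces to checking that these parameters yield exactly the sets $A$ and $B$ stated, and observing that the resulting GSEDF is in fact a SEDF.

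First I would compute $v = h_1 h_2 h_3 h_4 + 1 = 4a^2 + 1$, as required. Next, the set from Theorem \ref{GSEDF} becomes
\[
A = \bigcup_{i=0}^{1} \{i \cdot 2a + \alpha : 0 \leq \alpha \leq a-1\} = \{0,\dots,a-1\} \cup \{2a,\dots,3a-1\},
\]
matching the stated $A$. For $B$, the general formula gives
\[
B = \bigcup_{i=0}^{a-1} \{4ai + 2a + \beta a : 1 \leq \beta \leq 2\} = \bigcup_{i=0}^{a-1} \{(4i+3)a,\,(4i+4)a\},
\]
and reindexing with $i' = i+1$ yields $\bigcup_{i'=1}^{a} \{(4i'-1)a,\,4i'a\}$, matching the stated $B$.

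Finally, I would observe that $|A| = h_1 h_3 = 2a$ and $|B| = h_2 h_4 = 2a$ coincide, so Theorem \ref{GSEDF} delivers a classical $(4a^2+1,\,2;\,2a,\,2a;\,1,\,1)$-GSEDF with $k_1 = k_2 = 2a$ and common $\lambda = 1$. Since both sets have the same size, this is precisely a classical $(4a^2+1, 2, 2a, 1)$-SEDF in $\mathbb{Z}_{4a^2+1}$, as required.

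Since the heavy lifting — verifying the multiset equation $\Delta(A,B) = \mathbb{Z}_{4a^2+1}\setminus\{0\}$ via the sequence analysis — is already done in Theorem \ref{GSEDF}, there is no genuine obstacle; the only thing to be careful about is the bookkeeping that translates the general index conventions into the explicit form stated here. In particular, one must check that the reindexing $i \mapsto i+1$ used for $B$ is legitimate, and that the choices $h_2 = h_3 = 2$ with $h_1 = h_4 = a$ make the two weights $h_1 h_3$ and $h_2 h_4$ collapse to the same value $2a$, which is what promotes the GSEDF output of Theorem \ref{GSEDF} to an SEDF.
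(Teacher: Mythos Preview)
Your proposal is correct and takes exactly the same approach as the paper: the paper's proof is the single line ``We take $h_1=a$, $h_2=2$, $h_3=2$ and $h_4=a$ in the above theorem.'' Your version simply spells out the bookkeeping that this one-liner leaves implicit.
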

\begin{proof}
We take $h_1=a$, $h_2=2$, $h_3=2$ and $h_4=a$ in the above theorem.
\end{proof}

We now state a general result for when the above Theorem gives a classical SEDF.

\begin{corollary}\label{SEDF}
Let $h_1,h_2,h_3,h_4 \in \mathbb{N}$, such that $h_1h_3=h_2h_4$. Then there exists a classical $((h_1h_3)^2 + 1, 2, h_1h_3, 1)$-SEDF in $\mathbb{Z}_{(h_1h_3)^2+1}$ with sets given by $A=\cup_{i=0}^{h_3-1} \{ih_1h_2 + \alpha : 0\leq \alpha \leq h_1-1\}$ and $B=\cup_{i=0}^{h_4-1} \{ih_1h_2h_3+ h_1h_2(h_3-1) + \beta h_1: 1 \leq \beta \leq h_2\}$.
\end{corollary}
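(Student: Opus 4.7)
The plan is to derive this corollary as a direct specialization of Theorem~\ref{GSEDF}. That theorem already produces a classical $(h_1h_2h_3h_4+1,\,2,\,h_1h_3,\,h_2h_4,\,1,\,1)$-GSEDF in $\mathbb{Z}_{h_1h_2h_3h_4+1}$ with precisely the sets $A$ and $B$ described in the corollary, so no new combinatorial work is needed; what I need to show is only that the extra hypothesis $h_1h_3 = h_2h_4$ collapses the GSEDF parameters into classical SEDF parameters.

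First, I would invoke Theorem~\ref{GSEDF} to obtain the classical GSEDF $\{A,B\}$ in $\mathbb{Z}_{h_1h_2h_3h_4+1}$ with $|A|=h_1h_3$ and $|B|=h_2h_4$, and with the multiset equations $\Delta(A,B) = \Delta(B,A) = \mathbb{Z}_{h_1h_2h_3h_4+1}\setminus\{0\}$. Next I would substitute the hypothesis $h_1h_3 = h_2h_4$: this gives $|A|=|B|=h_1h_3$, so the common set size is $k=h_1h_3$, and the group order satisfies
\[
h_1h_2h_3h_4 + 1 = (h_1h_3)(h_2h_4) + 1 = (h_1h_3)^2 + 1.
\]
Since both multiset equations now read $\cup_{j\ne i} \Delta(A_i,A_j) = 1\cdot(G\setminus\{0\})$ with identical $k$ and identical $\lambda=1$, the family $\{A,B\}$ satisfies Definition~\ref{def:SEDF} and is therefore a classical $((h_1h_3)^2+1,\,2,\,h_1h_3,\,1)$-SEDF, as required.

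There is essentially no obstacle here, since Theorem~\ref{GSEDF} has already done the technical work of verifying the external difference multiset equation via the binary-sequence analysis. The only thing to be careful about is purely bookkeeping: confirming that the condition $h_1h_3 = h_2h_4$ genuinely equalizes the two set sizes (not merely the $\lambda$-values, which in the GSEDF are already both $1$) so that the GSEDF qualifies as an SEDF under Definition~\ref{def:SEDF}, and simplifying $h_1h_2h_3h_4+1$ to $(h_1h_3)^2+1$ using this same equality.
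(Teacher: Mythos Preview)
Your proposal is correct and matches the paper's approach exactly: the corollary is an immediate specialization of Theorem~\ref{GSEDF}, and the paper offers no separate proof beyond the statement itself. Your bookkeeping---that $h_1h_3=h_2h_4$ equalizes the set sizes and rewrites the group order as $(h_1h_3)^2+1$---is precisely what is needed.
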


We observe that the recursive construction in Theorem 5.6 (and hence Theorem 5.3) of \cite{HucJefNep} can also be proved using sequences; we omit the details.
\end{section}

\begin{section}{Applications in communications and cryptography}

We end by outlining some of the applications in communications and cryptography of the objects presented in this paper.

Non-disjoint PSEDFs have applications to optical orthogonal codes (OOCs)  \cite{HucNg}.   OOCs are sets of binary sequences used in optical multi-access communication \cite{ChuYan}.  For distinct binary sequences $X=(x_t)_{t=0}^{v-1}$ and $Y=(y_t)_{t=0}^{v-1}$, the autocorrelation of $X$ is a measure of how much it collides with shifts of itself ($\sum_{t=0}^{v-1} x_t x_{t+\delta}$, $1 \leq \delta \leq v-1$), while the cross-correlation of $X$ and $Y$ is a measure of how much $X$ collides with the shifts of $Y$ ($\sum_{t=0}^{v-1} x_t y_{t+\delta}$, $0 \leq \delta \leq v-1$).  Sequences $X$ and $Y$ are said to \emph{collide} in position $i$ if $x_i=1=y_i$.
The following definitions follow \cite{HucNg}.

\begin{definition}\label{def:CWOOC}
Let $v, w, \lambda_a, \lambda_c$ be non-negative integers with $v \ge 2$, $w \ge 1$.  Let ${\cal{C}}=\{X_0, \ldots, X_{N-1}\}$ be a family of $N$ binary sequences of length $v$ and weight $w$.  Then ${\cal{C}}$ is a (constant weight) $(v,w,\lambda_a, \lambda_c)$-OOC of size $N \ge 1$ if, writing $X=(x_i)_{i=0}^{v-1}$, $Y =(y_i)_{i=0}^{v-1}$ (indices modulo $v$): 
\begin{itemize}
\item[(i)] $\displaystyle{\sum_{t=0}^{v-1}} x_t x_{t+\delta} \le \lambda_a \mbox{ for any } X \in {\cal{C}}, 0 < \delta \le v-1$, and
\item[(ii)] $\displaystyle{\sum_{t=0}^{v-1}} x_t y_{t+\delta} \le \lambda_c \mbox{ for any } X, Y \in {\cal{C}}, 0 \le \delta \le v-1$.
\end{itemize}
\end{definition}
In other words, ${\cal{C}}$ is a $(v,w,\lambda_a, \lambda_c)$-OOC if auto-correlation values are at most $\lambda_a$ and cross-correlation values are at most $\lambda_c$. 

Via Definition \ref{sequenceset}, OOCs can equivalently be expressed as subsets of $\mathbb{Z}_v$ (see \cite{HucNg}).  For each sequence $X_i$, let $A_i$ be the set of integers modulo $v$ denoting the positions of the non-zero bits. The auto-correlation condition becomes
$$| A_i \cap (A_i + \delta) | \le \lambda_a \mbox{ for all $\delta \in
  \mathbb{Z}_v\setminus \{0\}$}$$
i.e. any non-zero $\delta$ occurs in $\Delta(A_i)$ at most $\lambda_a$   times, while the cross-correlation condition becomes
$$| A_i \cap (A_j + \delta) | \le \lambda_c
\mbox{ for all $\delta \in   \mathbb{Z}_v$}$$ i.e. any  $\delta$ occurs in $\Delta(A_i, A_j)$   most $\lambda_c$ times.

For applications, OOCs are required with low cross-correlation; in \cite{YanFuj} it is stated that the ``performance figure of merit" is the ratio between weight and cross-correlation.   In \cite{HucNg}, the following was proved:
\begin{proposition}\label{prop:optimalOOC}
If ${\cal{C}} = \{ X_0, \ldots, X_{N-1}\}$ is a $(v,w, \lambda_a, \lambda_c)$-OOC with $|{\cal{C}}| \ge 2$, then
$\lambda_{c} \ge \frac{w^2}{v}$.  Moreover, a $(v,w, \lambda_a, \lambda_c)$-OOC meeting the bound of (ii) with equality is a non-disjoint $(v,N,w,\frac{w^2}{v})$-PSEDF in $\mathbb{Z}_v$. 
\end{proposition}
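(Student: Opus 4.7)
The plan is a simple averaging/pigeonhole argument combined with the OOC-to-subset dictionary recalled just before the statement. First I would fix any two distinct codewords $X_i, X_j \in \mathcal{C}$ and pass to their subset representations $A_i, A_j \subseteq \mathbb{Z}_v$, each of size $w$. The cross-correlation bound then translates, via Theorem \ref{BinSeqSets}, into the multiset statement that every $\delta \in \mathbb{Z}_v$ occurs at most $\lambda_c$ times in $\Delta(A_i, A_j)$.

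Next I would count the multiset $\Delta(A_i, A_j) = \{a - b : a \in A_i,\, b \in A_j\}$ in two ways: its total cardinality is $|A_i|\cdot|A_j| = w^2$, and it is supported on the $v$ elements of $\mathbb{Z}_v$ with each multiplicity bounded by $\lambda_c$. Summing multiplicities yields $w^2 \leq v\lambda_c$, i.e.\ $\lambda_c \geq w^2/v$, which establishes the bound.

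For the equality case, I would observe that if $\lambda_c = w^2/v$, then the total count $w^2$ exactly equals $v$ times the pointwise upper bound. Hence every multiplicity must be \emph{exactly} $w^2/v$, giving $\Delta(A_i,A_j) = (w^2/v)\mathbb{Z}_v$ as multisets for every pair $i \neq j$. By Definition \ref{def:nondisjointSEDF} (and its pairwise version), this is precisely the defining condition of a non-disjoint $(v, N, w, w^2/v)$-PSEDF in $\mathbb{Z}_v$.

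There is really no substantial obstacle here: the whole argument is a one-line averaging step, and the characterisation of equality follows automatically because saturation of a sum of nonnegative terms each bounded by a common value forces pointwise equality at that value. The only mild point worth flagging is that $w^2/v$ must be an integer whenever equality is attained, but this is automatic once every multiplicity equals $w^2/v$.
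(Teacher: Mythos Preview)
Your argument is correct and is the standard averaging/pigeonhole proof: count $|\Delta(A_i,A_j)|=w^2$ against the pointwise bound $\lambda_c$ over $v$ residues, and observe that saturation forces every multiplicity to equal $w^2/v$, which is exactly the non-disjoint PSEDF condition.

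Note, however, that the paper does not actually give its own proof of this proposition: it is quoted as a result from \cite{HucNg} (``In \cite{HucNg}, the following was proved''). So there is no in-paper proof to compare against. Your averaging argument is precisely the natural one and almost certainly coincides with the proof in the cited reference.
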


All non-disjoint PSEDFs from Theorem \ref {generalPSEDF} yield OOCs which satisfy the conditions of Proposition \ref{prop:optimalOOC} and hence have lowest-possible cross-correlation.

In Definition \ref{def:CWOOC}, all sequence weights (and hence all corresponding set-sizes in $\mathbb{Z}_v$) are equal.  There is also a variable weight version of the OOC, called a VW-OOC, in which sequence weights are permitted to vary.  By the natural sequence/subset correspondence of Definition \ref{sequenceset}, non-disjoint GPSEDFs in $\mathbb{Z}_v$ yield examples of VW-OOCs (though we do not have an analogue of Proposition \ref{prop:optimalOOC}). The following definition follows that of \cite{ChuYan}.

\begin{definition}
Let $\mathcal{C} = \{C_a: 0 \leq a \leq N-1\}$ be a set of $N$ binary sequences
of length $v$, where $C_a=(c_a(t))_{t=0}^{v-1}$.  Then $\mathcal{C}$ is a $(v,W,\Lambda, \lambda_c, R)$-VW-OOC with $W = \{w_1,\ldots,w_m\}$,
$\Lambda = \{ \lambda_a(1), \ldots, \lambda_a(m)\}$, and $R = \{r_1,\ldots, r_m\}$ if the
following three conditions are satisfied:
\begin{itemize}
\item[(i)] there are exactly $r_i N$ codewords with weight $w_i$ for
$1 \leq i \leq m$, where $r_1 + \cdots + r_m = 1$ and $r_j \geq 0$ for
$1 \leq j \leq m$;
\item[(ii)] the autocorrelation of $C_a$ with weight $w_i$ in $C$ is upper bounded by some positive integer $\lambda_a(i)$; 
\item[(iii)] the cross-correlation between $C_a$ and $C_b$ with $a \neq b$ in
$C$ is upper bounded by some positive integer $\lambda_c$.
\end{itemize}
\end{definition}

All constructions of non-disjoint GPSEDFs in $\mathbb{Z}_v$ yield corresponding VW-OOCs.  Moreover, our approaches can be adapted to give new constructions for VW-OOCs which do not have the GPSEDF property.  We present an example of such a construction, in which the VW-OOCs have lowest-possible $\lambda_c=1$. (As with standard OOCs, low cross-correlation is a priority for VW-OOCs).   These are distinct from known constructions in \cite{ChuYan} and \cite{Yan}.

\begin{theorem}\label{thm:VW}
Let $a_1,\ldots,a_m$ be distinct divisors of $v$ such that for each $i \neq j$, $v=lcm(a_i,a_j)$.  For $1 \leq i \leq m$, define the binary sequence $X_i=(x_t)_{t=0}^{v-1}$ where:
    \[x_t = \begin{cases}
	1,& t=la_i\quad  (0 \leq l \leq \frac{v}{a_i}-1)\\
	0,& \text{otherwise}
	\end{cases}\]
Then $\{X_1,\ldots, X_m\}$ form a VW-OOC in which $N=m$, $r_i=\frac{1}{m}$, $w_i=\frac{v}{a_i}$, $\lambda_a(i)=\frac{v}{a_i}$ and $\lambda_c=1$.
\end{theorem}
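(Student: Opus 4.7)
My approach is to verify the three conditions of the VW-OOC definition directly, with Lemma \ref{lem:gcd} doing the essential work for the cross-correlation bound. First I would record the set-theoretic picture: the subset $A_i\subseteq\mathbb{Z}_v$ corresponding to $X_i$ is exactly the cyclic subgroup $a_i\mathbb{Z}_v$, which has order $v/a_i$. So $X_i$ has weight $w_i=v/a_i$; because the $a_i$ are distinct divisors of $v$, the weights $w_i$ are distinct, each occurring exactly once among the $m$ codewords, which gives $r_i=1/m$ and $r_1+\cdots+r_m=1$, verifying condition (i).

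For the autocorrelation (condition (ii)), I would use the fact that $A_i=a_i\mathbb{Z}_v$ is a subgroup. For any shift $\delta\neq 0$, if $\delta\in a_i\mathbb{Z}_v$ then $A_i+\delta=A_i$, so all $v/a_i$ ones coincide; if $\delta\notin a_i\mathbb{Z}_v$, then $A_i\cap(A_i+\delta)=\emptyset$ and the autocorrelation is $0$. Consequently the autocorrelation at any nonzero shift is either $0$ or $v/a_i$, giving the (tight) bound $\lambda_a(i)=v/a_i$.

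The heart of the argument is condition (iii). Fix $i\neq j$; by Theorem \ref{BinSeqSets}, the cross-correlation value $\sum_{t=0}^{v-1} x_t y_{t+\delta}$ equals the multiplicity of $\delta$ in $\Delta(A_j,A_i)=\Delta(a_j\mathbb{Z}_v,a_i\mathbb{Z}_v)$. By Lemma \ref{lem:gcd}(i), this multiset consists of $v/\operatorname{lcm}(a_i,a_j)$ copies of $\gcd(a_i,a_j)\mathbb{Z}_v$. The hypothesis $v=\operatorname{lcm}(a_i,a_j)$ collapses the coefficient to $1$, so each element of $\mathbb{Z}_v$ appears at most once in the difference multiset; hence every cross-correlation value is $0$ or $1$, and $\lambda_c=1$ as claimed. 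The only real subtlety is the translation between the analytic sum-of-products definition of correlation and the combinatorial difference-multiset formulation, but this is handled uniformly by Theorem \ref{BinSeqSets}. There is no delicate estimation: the pairwise-$\operatorname{lcm}$ hypothesis is engineered precisely to reduce the Lemma \ref{lem:gcd} coefficient $v/\operatorname{lcm}(a_i,a_j)$ to $1$, which is exactly what the minimum-cross-correlation conclusion demands.
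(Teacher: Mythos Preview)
Your proof is correct and follows essentially the same route as the paper: identify each $A_i$ with the subgroup $a_i\mathbb{Z}_v$, read off the weights and $r_i$, use the subgroup structure to compute the autocorrelation, and invoke Lemma~\ref{lem:gcd} together with the hypothesis $v=\operatorname{lcm}(a_i,a_j)$ to force the cross-correlation multiplicity down to at most $1$. The only cosmetic difference is that you make the translation via Theorem~\ref{BinSeqSets} explicit, whereas the paper leaves it implicit.
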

\begin{proof}
By construction, there is one sequence $X_i$ of each distinct weight $w_i=\frac{v}{a_i}$, so $r_i=\frac{1}{m}$.  Observe that the subset $A_i$ of $\mathbb{Z}_v$ corresponding to the sequence $X_i$ is the subgroup $A_i= a_i \mathbb{Z}_v$.  The internal difference multiset $\Delta(A_i)$ comprises each element of $A_i$ precisely $|A_i|$ times and so also $\lambda_a(i)=\frac{v}{a_i}$.  By Lemma \ref{lem:gcd}, for any pair $i \neq j$, $\Delta(A_i,A_j)$ comprises $\frac{v}{lcm(a_i,a_j)}=1$ copy of each element of $gcd(a_i,a_j) \mathbb{Z}_v \subseteq \mathbb{Z}_v$ and $0$ copies of $\mathbb{Z}_v \setminus gcd(a_i,a_j) \mathbb{Z}_v$, so $\lambda_c=1$.
\end{proof}

\begin{example}
Let $v=p_1 p_2 \ldots p_m$ where the $p_i$ are distinct primes ($1 \leq i \leq m$).  For $1 \leq i \leq m$, let $a_i=\frac{v}{p_i}$.  Clearly for $i \neq j$, $lcm(a_i,a_j)=v$, so we may apply the construction of Theorem \ref{thm:VW}.\\
Specifically, let $v=30=2 \cdot 3 \cdot 5$, where $a_1=6$, $a_2=10$ and $a_3=15$.  Then $N=3$ and we obtain a $(30, \{2,3,5\}, \{2,3,5\}, 1, \{\frac{1}{3},\frac{1}{3},\frac{1}{3}\})$ VW-OOC.  
\end{example}

Theorem \ref{thm:VW} can easily be extended, using Lemma \ref{lem:gcd}(ii) and the approach of Theorem \ref{thm:AlgGps}, to yield VW-OOCs with larger weights and larger $\lambda_c$, by taking suitable unions of cosets of subgroups of $\mathbb{Z}_v$.

In a different communications context, the sequence version of a non-disjoint GPSEDF in $\mathbb{Z}_v$ will satisfy the requirement for a set of periodic binary sequences (with common period $v$, or some divisor of $v$) to be \emph{pairwise shift invariant}  \cite{ZhaShuWon}.  A binary sequence $S=(x_i)$ (of unspecified length) is said to be periodic with period $L$ if $x_i=x_{i+L}$ for all $i \geq 1$ (with $L$ smallest possible). 

\begin{definition}[\cite{ZhaShuWon}]
Let $S_1,\ldots,S_k$ be binary sequences with a common period $L$; denote the $t$-th entry of $S_i$ by $S_i(t)$.  Their $k$-wise Hamming cross-correlation for relative shifts $\tau_1,\ldots,\tau_{k-1}$ is defined as
$$ H_{S_1,\ldots,S_k}(\tau_1,\ldots,\tau_{k-1})=\sum_{t=0}^{L-1}S_1(t) S_2(t+\tau_1) \ldots S_k(t+\tau_{k-1})$$
(the normalized version is $\hat{H}_{S_1,\ldots,S_k}=\frac{1}{L}H_{S_1,\ldots,S_k})$.
For $k=2$, this is the usual Hamming cross-correlation.
This is called shift invariant (SI) if $\hat{H}_{S_1,\ldots,S_k}$ is equal to a constant; a set of sequences is completely $SI$ if this is constant for all choices of $k$ sequences and all $k$; pairwise SI  if the 2-wise Hamming cross-correlation is SI for all pairs.
\end{definition}

In defining multiple access protocols, it can be desirable to have binary sequences which are completely SI; for some applications it is of interest to relax the condition to the pairwise case. Under certain conditions, a set of pairwise SI sequences in fact satisfies the stronger property of being completely SI:  

\begin{theorem}\label{thm:duty}[Theorem 3, \cite{ZhaShuWon}]
Let $p$ be a prime. 
For binary sequence $S$ with period $L$, its duty factor is defined to be $\frac{1}{L}\sum_{i=0}^{L-1} S(i)$. 
If $K$ pairwise SI protocol sequences with duty factors $\frac{n_i}{p}$, for $i=1,2,\ldots,K$ have a common minimum period $p^K$ , then they are completely SI.
\end{theorem}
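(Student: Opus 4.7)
The plan is to translate the statement into discrete Fourier analysis on $\mathbb{Z}_{p^K}$. Identify each sequence $S_i$ with the subset $A_i \subseteq \mathbb{Z}_{p^K}$ of positions where $S_i(t)=1$; the duty factor hypothesis yields $|A_i| = n_i p^{K-1}$ with $1 \leq n_i \leq p-1$. For a character $\chi$ of $\mathbb{Z}_{p^K}$, write $\hat{A_i}(\chi) = \sum_{a \in A_i}\chi(a)$. A direct computation shows the pairwise cross-correlation $\sum_t S_i(t)S_j(t+\tau)$ equals $\frac{1}{p^K}\sum_\chi \overline{\hat{A_i}(\chi)}\hat{A_j}(\chi)\chi(\tau)$, so pairwise shift-invariance is equivalent to $\hat{A_i}(\chi)\overline{\hat{A_j}(\chi)} = 0$ for every nontrivial $\chi$ and every $i \neq j$; equivalently, the nontrivial Fourier supports $E_i = \{\chi \neq 1 : \hat{A_i}(\chi) \neq 0\}$ are pairwise disjoint.

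Expanding each indicator in characters and summing over $t$ yields
\[
H_{S_{i_1},\ldots,S_{i_k}}(\tau_1,\ldots,\tau_{k-1}) = \frac{1}{p^{K(k-1)}}\sum_{\chi_1\cdots\chi_k = 1}\hat{A_{i_1}}(\chi_1)\cdots\hat{A_{i_k}}(\chi_k)\,\overline{\chi_2(\tau_1)}\cdots\overline{\chi_k(\tau_{k-1})},
\]
so $H$ is constant in $(\tau_1,\ldots,\tau_{k-1})$ if and only if, for every tuple $(\chi_1,\ldots,\chi_k)$ with $\chi_1\cdots\chi_k = 1$ and not all $\chi_j$ trivial, some factor $\hat{A_{i_j}}(\chi_j) = 0$. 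Cases with exactly one or exactly two nontrivial $\chi_j$ are immediate: a single nontrivial factor contradicts $\chi_1\cdots\chi_k = 1$; two nontrivial factors force $\chi_r = \bar\chi_s$, and since each $E_i$ is closed under $\chi \mapsto \bar\chi$ (because $\hat{A_i}(\bar\chi) = \overline{\hat{A_i}(\chi)}$), this violates the pairwise disjointness of the $E_i$'s.

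The main obstacle is the case in which three or more of the $\chi_j$ are nontrivial, since pairwise disjointness and the relation $\chi_{j_1}\cdots\chi_{j_r} = 1$ with $r \geq 3$ are not \emph{a priori} incompatible in a cyclic group. Here I would exploit the prime-power structure. The character group of $\mathbb{Z}_{p^K}$ is cyclic of order $p^K$ (identify $\chi_0^s \leftrightarrow s$), and its only proper subgroups are $p^m\mathbb{Z}_{p^K}$. The minimum-period hypothesis forces each $E_i$ to contain at least one element of $\mathbb{Z}_{p^K}^\times$, while the duty factor $n_i/p$ with $\gcd(n_i,p) = 1$ constrains the size of $E_i$ via Parseval, namely $\sum_{s \neq 0}|\hat{A_i}(s)|^2 = n_i(p - n_i)p^{2K-2}$.

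Using these constraints, I would proceed by induction on $K$. For the base case $K=1$, the set $\mathbb{Z}_{p}\setminus\{0\}$ has $p-1$ elements and is partitioned among the $E_i$, so a null relation $s_{j_1}+\cdots+s_{j_r} \equiv 0 \pmod p$ with $s_{j_m} \in E_{i_{j_m}}$ and distinct indices is ruled out by a direct counting argument since $\sum_i n_i \leq K \cdot (p-1)$. For the inductive step, I would reduce a relation $s_{j_1}+\cdots+s_{j_r} \equiv 0 \pmod{p^K}$ modulo $p$ to obtain a relation among the images $\bar{s}_{j_m} \in \mathbb{Z}_p$, combining this with the observation that reduction mod $p$ of $E_i$ inherits a disjointness property from the $E_i$'s themselves. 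The combinatorial elimination here is conceptually clean but requires careful bookkeeping of which $E_i$'s contribute units and which contribute non-units, and this is the step I expect to be most delicate in a fully written-out proof.
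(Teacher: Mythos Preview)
The paper does not prove this theorem; it is quoted verbatim as Theorem~3 of \cite{ZhaShuWon} and used as a black box. So there is no ``paper's own proof'' to compare against, and your proposal should be judged on its own merits.

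Your Fourier-analytic reformulation is correct: pairwise shift-invariance is equivalent to the nontrivial supports $E_i=\{\chi\neq 1:\hat{A_i}(\chi)\neq 0\}$ being pairwise disjoint, and complete shift-invariance is equivalent to the nonexistence of a relation $\chi_{j_1}\cdots\chi_{j_r}=1$ with each $\chi_{j_m}\in E_{i_{j_m}}$ nontrivial and the indices $i_{j_1},\ldots,i_{j_r}$ distinct. The cases $r=1,2$ are handled correctly.

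The gap is in your treatment of $r\geq 3$. First, the base case $K=1$ is vacuous (there is only one sequence, so both ``pairwise SI'' and ``completely SI'' are empty conditions), not something established by a counting argument; your phrasing there suggests you are conflating the exponent $K$ with something else. More seriously, the inductive step rests on the assertion that ``reduction mod $p$ of $E_i$ inherits a disjointness property from the $E_i$'s themselves,'' and this is not justified. Disjoint subsets of $\mathbb{Z}_{p^K}\setminus\{0\}$ need not have disjoint images in $\mathbb{Z}_p$, and nothing you have derived from the duty-factor or minimum-period hypotheses forces this. The Parseval identity you quote constrains $\sum_{s\neq 0}|\hat{A_i}(s)|^2$ but gives no control over which residues mod $p$ appear in $E_i$. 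What is actually needed is a structural statement pinning down each $E_i$ in terms of $p$-adic valuation (so that elements drawn from distinct $E_i$'s cannot sum to zero in $\mathbb{Z}_{p^K}$), and establishing that requires using all three hypotheses together in a way your sketch does not indicate. As written, the argument for $r\geq 3$ is an outline of hopes rather than a proof.
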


All of our non-disjoint GPSEDFs in $\mathbb{Z}_v$ yield examples of pairwise SI sequences with common period dividing $v$, and some of our constructions (e.g. Corollary \ref{cor:powers}(i)) satisfy the condition of Theorem \ref{thm:duty} to be completely SI.  Conversely, the constructions of pairwise SI sequences with common period $L$ in \cite{ZhaShuWon} (defined via cyclotomic polynomials) give examples of non-disjoint GPSEDFs in $\mathbb{Z}_L$.

\begin{example}
Applying the non-disjoint GPSEDF construction of Corollary \ref{thm:3sets} in $\mathbb{Z}_{30}$ with $a_1=6, a_2=15$ and $a_3=10$ yields Example 1 of \cite{ZhaShuWon}.
\end{example}

Finally, the classical two-set SEDFs obtainable from Theorem \ref{GSEDF} and Corollary \ref{SEDF} have applications to classical circular external difference families (CEDFs) and circular AMD codes \cite{VeiSti, WuYanFen, PatSti2}, in particular the strong version of these (SCEDFs). These are recently-introduced structures used in cryptography to construct non-malleable threshold schemes. Classical two-set SEDFs give examples of classical two-set SCEDFs; it has been shown that no more than two sets are possible in classical SCEDF \cite{WuYanFen}.  We note that, if the non-disjoint version of the SCEDF is defined in the natural way, then any non-disjoint PSEDF yields a non-disjoint SCEDF, and hence non-disjoint SCEDFs exist with any number of sets.
\end{section}

\begin{section}{Conclusion}
We end with a table summarising the constructive and structural results presented in this paper. We hope that these results will stimulate further research in the area.
    
In the table, we use the following abbreviations; Thm: Theorem, Prop: Proposition, Cor: Corollary and Lem: Lemma.\\

    \begin{center}
    \begin{tabular}{| c| c | c |}
    \hline
    & Constructions & Structural Results \\ 
    \hline
    Classical SEDF & Thm 6.3, Cor 6.4 & \\
    \hline
    Classical PSEDF & & Thm 2.22 \\ 
    \hline
    Classical GSEDF & Thm 6.1, Thm 6.2 & Thm 2.23(ii) \\   
    \hline
    Classical GEDF & Thm 4.12(ii) & \\
    \hline
    \end{tabular}
    \\
    Table 1: A list of the classical results included in the paper
    \\

    \begin{tabular}{| c| c | c |}
    \hline
    & Constructions & Structural Results \\
    \hline
    Non-Disjoint PSEDF & Thm 3.4, Cor 3.5,& Prop 2.7(iii) \\ 
    & Cor 3.6, Cor 4.4(i), Thm 5.1 & \\
    \hline
    Non-Disjoint GPSEDF & Prop 2.12(ii), Prop 2.13(v), Thm 2.25, & Prop 2.13(v),\\
    & Thm 3.2, Cor 3.3, Thm 3.7, & Thm 2.24 \\
    & Cor 3.8, Thm 3.10, Thm 4.3, & \\
    & Cor 4.4, Lem 4.7, Thm 4.9,& \\
    &  Cor 4.10, Thm 4.12(i), Cor 5.3(i) &\\  
    \hline
    Non-disjoint GSEDF & Prop 2.12(ii), Cor 5.3(iii) & Thm 2.23(i) \\  
    \hline
    Non-disjoint MGPSEDF & Thm 3.12, Cor 5.3(ii) & Thm 2.24(iv,v) \\  
    \hline
    Non-disjoint MGSEDF & Cor 5.3(iv) & \\
    \hline
    \end{tabular}
    Table 2: A list of the non-disjoint results included in the paper
    \end{center}
    
\end{section}
\begin{subsection}*{Acknowledgements}
The first author was funded by EPSRC grant EP/X021157/1.  The second author was funded by the London Mathematical Society grant URB-2023-75.
\end{subsection}

\end{document}